\documentclass[12pt]{amsart}
\usepackage{amssymb,graphicx,mathtools,enumerate}
\usepackage{amsmath,tabmac_avi}
\usepackage[pdfencoding=auto,psdextra]{hyperref}
\usepackage{tikz}
\usetikzlibrary{calc}
\usetikzlibrary{decorations.pathreplacing}

\evensidemargin 0in
\oddsidemargin \evensidemargin
\textwidth 6.5in

\tolerance 5000

%
   \title{A Proof of the Extended Delta Conjecture}

   \author{J. Blasiak}
   \author{M. Haiman}
   \author{J. Morse}
   \author{A. Pun}
   \author{G. H. Seelinger}

   \address[Blasiak]{Dept.\ of Mathematics\\
            Drexel University \\
            Philadelphia, PA}
   \email{jblasiak@gmail.com}

   \address[Haiman]{Dept.\ of Mathematics\\
            University of California\\
            Berkeley, CA}
   \email{mhaiman@math.berkeley.edu}

   \address[Morse]{Dept.\ of Mathematics\\
   University of Virginia\\
   Charlottesville, VA}
   \email{morsej@virginia.edu}

   \address[Pun]{Dept.\ of Mathematics\\
   University of Virginia\\
   Charlottesville, VA}
   \email{ayp6e@virginia.edu}

   \address[Seelinger]{Dept.\ of Mathematics\\
   University of Virginia\\
   Charlottesville, VA}
   \email{ghs9ae@virginia.edu}

   \thanks{Authors were supported by NSF Grants DMS-1855784 (J.~B.)
    and DMS-1855804 (J.~M. and G.~S.).}


\newtheorem{thm}{Theorem}[subsection]
\newtheorem{lemma}[thm]{Lemma}
\newtheorem{prop}[thm]{Proposition}
\newtheorem{cor}[thm]{Corollary}

\theoremstyle{definition}
\newtheorem{defn}[thm]{Definition}

\theoremstyle{remark}
\newtheorem{example}[thm]{Example}

\newtheorem{remark}[thm]{Remark}


\newcommand{\kk}{{\mathbf k}}
\newcommand{\aA}{{\mathbf a}}
\newcommand{\bb}{{\mathbf b}}

\newcommand{\rr}{{\mathbf r}}
\newcommand{\sS}{{\mathbf s}}

\newcommand{\Dyck}{\mathbf {D}}
\newcommand{\LD}{{\mathbf {L}}}

\newcommand{\NN}{{\mathbb N}}
\newcommand{\QQ}{{\mathbb Q}}

\newcommand{\RR}{{\mathbb R}}
\newcommand{\ZZ}{{\mathbb Z}}

\newcommand{\Hbold}{{\mathbf H}}

\newcommand{\sigmabold}{{\boldsymbol \sigma }}

\newcommand{\Ecal}{{\mathcal E}}

\newcommand{\Lcal}{{\mathcal L}}
\newcommand{\Acal}{{\mathcal A}}

\newcommand{\Htild}{\tilde{H}}

\DeclareMathOperator{\dinv}{dinv}

\DeclareMathOperator{\pol}{pol}

\DeclareMathOperator{\GL}{GL}
\DeclareMathOperator{\SL}{SL}

\DeclareMathOperator{\SSYT}{SSYT}
\DeclareMathOperator{\NSYT}{RST}
\DeclareMathOperator{\area}{area}
\DeclareMathOperator{\Ad}{Ad}
\DeclareMathOperator{\wt}{wt}
\DeclareMathOperator{\op}{op}

\newcommand{\sump}[2]{{{\mathmakebox[0.3in]{\sum\limits_{#1}^{#2}}}
\hskip-0.13in\phantom{x}^{\#}}}


\begin{document}

\subjclass[2010]{Primary: 05E05; Secondary: 16T30}

\begin{abstract}
We prove the Extended Delta Conjecture of Haglund, Remmel, and Wilson,
a combinatorial formula for $\Delta _{h_l}\Delta' _{e_k} e_{n}$, where
$\Delta' _{e_k}$ and $\Delta_{h_l}$ are Macdonald eigenoperators and
$e_n$ is an elementary symmetric function.  We actually prove a
stronger identity of infinite series of $\GL_m$ characters
expressed in terms of LLT series. This is achieved through new results
in the theory of the Schiffmann algebra and its action on the algebra
of symmetric functions.
\end{abstract}

\maketitle

\section{Introduction}
\label{s:intro}

We prove the {\em Extended Delta Conjecture} of Haglund, Remmel and
Wilson~\cite{HagRemWil18} by adapting methods from our work in
\cite{paths} on a generalized Shuffle Theorem and proving new results
about the action of the elliptic Hall algebra on symmetric functions.
As in \cite{paths}, we reformulate the conjecture as the polynomial
truncation of an identity of infinite series of $\GL_m$ characters,
expressed in terms of LLT series.  We then prove the stronger infinite
series identity using a Cauchy identity for non-symmetric
Hall-Littlewood polynomials.

The conjecture stemmed from studies of the diagonal coinvariant
algebra ${\rm DR}_n$ in two sets of $n$ variables, whose character as
a doubly graded $S_{n}$ module has remarkable links with both
classical combinatorial enumeration and the theory of Macdonald
polynomials.  It was shown in \cite{Haiman02} that this character is
neatly given by the formula $\Delta'_{e_{n-1}} e_n$, where
$\Delta'_{f}$ is a certain eigenoperator on Macdonald polynomials and
$e_{n}$ is the $n$-th elementary symmetric function.

The {\em Shuffle Theorem}, conjectured in~\cite{HHLRU} and proven
by Carlsson and Mellit in~\cite{CarlMell18}, gives a combinatorial
expression for $\Delta'_{e_{n-1}}e_n$ in terms of Dyck paths---that
is, lattice paths from $(0,n)$ to $(n,0)$ that lie weakly below the
line segment connecting these two points.

An expanded investigation led Haglund, Remmel and
Wilson~\cite{HagRemWil18} to the {\it Delta Conjecture}, a
combinatorial prediction for $\Delta'_{e_{k}}e_n$, for all $0\leq
k<n$.  This led to a flurry of activity (e.g.~\cite{DaddIraVan19,
GaHaReY, HagRemWil18, HagRhSh18, 
HagRhSh19,QW,
Rhoades18, Romero17, Wil16,
Zabrocki19b}), including a conjecture by Zabrocki
~\cite{Zabrocki19} that $\Delta'_{e_{k}}e_n$ captures the character of
the super-diagonal coinvariant ring ${\rm SDR}_n$, a deformation of
${\rm DR}_n$ involving the addition of a set of anti-commuting
variables.

The Delta Conjecture has been extended in two directions.  One gives a
{\it Compositional} generalization, just proved by D'Adderio and
Mellit~\cite{DaddMe}.  The other involves a second eigenoperator
$\Delta_{h_l}$, where $h_l$ is the $l$-th homogeneous symmetric
function.  The {\it Extended Delta Conjecture}~\cite[Conjecture
7.4]{HagRemWil18} is, for $l\geq 0$ and $1\leq k\leq n$,
\begin{equation}\label{e:dc}
\Delta_{h_l}\Delta'_{e_{k-1}} e_n = \langle z^{n-k}\rangle
\sum_{\lambda \in \Dyck_{n+l}} \sum_{P\in\LD_{n+l,l}(\lambda)}
q^{\dinv(P)}t^{\area(\lambda)} x^{\wt_+(P)}
\prod_{r_{i}(\lambda)=r_{i-1}(\lambda)+1} \left(1+ z\,
t^{-r_i(\lambda)}\right) \,,
\end{equation}
in which $\lambda $ is a Dyck path and $P$ is a certain type of
labelling of $\lambda $ (see \S~\ref{s:extended-delta} for full
definitions).  D'Adderio, Iraci and Wyngaerd proved the Schr\"oder case and the $t = 0$
specialization of the conjecture~\cite{DaddIraVan19a,DaddIraVan19}; Qiu and
Wilson~\cite{QW} reformulated the conjecture and established the $q=0$
specialization as well.

Let us briefly outline the steps by which we prove \eqref{e:dc}.

Feigin--Tsymbauliak~\cite{FeigTsym11} and
Schiffmann--Vasserot~\cite{SchiVass13} constructed an action of the
elliptic Hall algebra $\Ecal $ of Burban and
Schiffmann~\cite{BurbSchi12} on the algebra of symmetric functions.
The operators $\Delta _{f}$ and $\Delta '_{f}$ are part of the $\Ecal$
action.  In Theorem~\ref{t:ourLHS}, we use this to reformulate the
left hand side of~\eqref{e:dc} as the polynomial part of an explicit
infinite series of virtual $\GL_{m}$ characters with coefficients in
$\QQ (q,t)$.
The proof of Theorem~\ref{t:ourLHS} relies on a symmetry
between distinguished elements of $\Ecal $ introduced by
Negut~\cite{Negut14} and their transposes given in
Proposition~\ref{prop:D=E}.  

In Theorem~\ref{t:rhs_reformulation}, we also reformulate the right
hand side of~\eqref{e:dc} as the polynomial part of an infinite
series, in this case expressed in terms of the LLT series introduced
by Grojnowski and Haiman in \cite{GrojHaim07}.

With~\eqref{e:dc} now rewritten as an equality between the polynomial
parts of two expressions in~\eqref{e:infseriesidentity}, we ultimately
arrive at Theorem~\ref{thm:1}---an identity of infinite series of
$\GL_m$ characters which implies the Extended Delta Conjecture by
taking the polynomial part on each side.

Although the Extended Delta Conjecture and the Compositional Delta
Conjecture both imply the Delta Conjecture, they generalize it in
different directions and our methods are quite different from those of
D'Adderio and Mellit.  It would be interesting to know whether a
common generalization is possible.

\section{The Extended Delta Conjecture}
\label{s:extended-delta}

The Extended Delta Conjecture equates a ``symmetric function side'',
involving the action of a Macdonald operator on an elementary
symmetric function, with a ``combinatorial side''.  We begin by
recalling the definitions of these two quantities.

\subsection{Symmetric function side}
\label{ss:symmetric-side}

Integer partitions are written \(\lambda = (\lambda _{1}\geq \cdots
\geq \lambda _{l})\), sometimes with trailing zeroes allowed.  We set
$|\lambda | = \lambda _{1}+\cdots +\lambda _{l}$ and let $\ell(\lambda
)$ be the number of non-zero parts.  We identify a partition $\lambda$
with its French style Ferrers shape, the set of lattice squares (or
{\em boxes}) with northeast corner in the set
\begin{equation}\label{e:Young-diagram}
\{(i,j)\mid 1\leq j\leq \ell(\lambda ),\; 1\leq i \leq \lambda _{j} \}.
\end{equation}
The {\em shape generator} of $\lambda $ is the polynomial
\begin{equation}\label{e:B-lambda}
B_{\lambda }(q,t) = \sum _{(i,j)\in \lambda} q^{i-1}\, t^{j-1}.
\end{equation}

Let $\Lambda = \Lambda _{\kk }(X)$ be the algebra of symmetric
functions in an infinite alphabet of variables $X =
x_{1},x_{2},\ldots$, with coefficients in the field $\kk = \QQ (q,t)$.
We follow the notation of Macdonald~\cite{Macdonald95} for the graded
bases of $\Lambda $.  Basis elements are indexed by a partition
$\lambda $ and have homogeneous degree $|\lambda |$.  Examples include
the elementary symmetric functions $e_{\lambda } = e_{\lambda
_{1}}\cdots e_{\lambda _{k}}$, complete homogeneous symmetric
functions $h_{\lambda } = h_{\lambda _{1}}\cdots h_{\lambda _{k}}$,
power-sums $p_{\lambda } = p_{\lambda _{1}}\cdots p_{\lambda _{k}}$,
monomial symmetric functions $m_{\lambda }$, and Schur functions
$s_{\lambda }$.

As is conventional, $\omega \colon \Lambda \rightarrow \Lambda $
denotes the $\kk $-algebra involution defined by $ \omega s_{\lambda}
= s_{\lambda^*}$,  where $\lambda ^{*}$ denotes the transpose of $\lambda$,
and $\langle -, - \rangle$ denotes the symmetric bilinear
inner product such that \(\langle s_{\lambda },s_{\mu } \rangle =
\delta _{\lambda ,\mu }\).

The basis of modified Macdonald polynomials, $\Htild_{\mu }(X;q,t)$,
is defined~\cite{GarsHaim96} from the integral form Macdonald
polynomials $J_{\mu }(X;q,t)$ of \cite{Macdonald95} using the device
of {\em plethystic evaluation}.  For an expression $A$ in terms of
indeterminates, such as a polynomial, rational function, or formal
series, $p_{k}[A]$ is defined to be the result of substituting $a^{k}$
for every indeterminate $a$ occurring in $A$.  We define $f[A]$ for
any $f\in \Lambda $ by substituting $p_{k}[A]$ for $p_{k}$ in the
expression for $f$ as a polynomial in the power-sums $p_{k}$, so that
$f\mapsto f[A]$ is a homomorphism.  The variables $q, t$ from our
ground field $\kk $ count as indeterminates.  The modified Macdonald
polynomials are defined by
\begin{equation}\label{e:H-tilde}
\Htild _{\mu }(X;q,t) = t^{n(\mu )} J_{\mu
}\left[\frac{X}{1-t^{-1}};q,t^{-1}\right],
\end{equation}
where
\begin{equation}\label{e:n-lambda}
n(\mu  ) = \sum _{i} (i-1)\mu _{i}
\end{equation}
For any symmetric function $f\in \Lambda $, let $f[B]$ denote the
eigenoperator on the basis $\{\tilde{H_{\mu }} \}$ of $\Lambda $ such
that
\begin{equation}\label{e:eigen-operators}
f[B]\, \Htild _{\mu } = f[B_{\mu }(q,t)]\, \Htild _{\mu }\,.
\end{equation}

The left hand side of~\eqref{e:dc} is expressed in the notation
of~\cite{HagRemWil18}, where $\Delta_f=f[B]$ and $\Delta_f'=f[B-1]$.
Hence, the {\bf symmetric function side of the Extended Delta
Conjecture} is
\begin{equation}\label{e:sside}
h_l[B]e_{k-1}[B-1] e_n\,.
\end{equation}

\subsection{The combinatorial side}
\label{ss:combinatorial-side}

The right hand side of the Extended Delta Conjecture~\eqref{e:dc} 
is a combinatorial generating function that counts labelled
lattice paths.

\begin{defn}\label{def:dyck-paths}
A \emph{Dyck path} is a south-east lattice path lying weakly below the
line segment connecting the points \((0,N)\) and \((N,0)\).  The set
of such paths is denoted \(\Dyck_N\).  The \emph{staircase path}
\(\delta\) is the Dyck path alternating between south and east steps.

Each \(\lambda \in \Dyck_N\) has \(\area(\lambda)=|\delta
/\lambda|\) defined to be the number of lattice squares lying above
\(\lambda\) and below \(\delta\).  Let \(r_i(\lambda)\) be the area
contribution from squares in the \(i\)-th row, numbered from north to
south; in other words, $r_{i}$ is the distance from the \(i\)-th south
step of \(\lambda\) to the \(i\)-th south step of \(\delta\).  Note
that
\begin{equation}
r_1(\lambda)=0,\qquad r_i(\lambda)\leq r_{i-1}(\lambda)+1 \quad
\text{for }i>1, \quad\text{and}\quad \sum_{i=1}^{N}
r_i(\lambda)=|\delta/\lambda|\,.
\end{equation}
\end{defn}

\begin{defn}\label{def:labeled-dyck-paths}
A \emph{labelling} \(P = (P_1,\dots,P_N) \in \NN^N\) attaches a label
in \(\NN=\{0,1,\ldots\}\) to each south step of \(\lambda \in
\Dyck_N\) so that the labels increase from north to south along
vertical runs of south steps, as shown in Figure~\ref{fig:PF}. The set
of labellings is denoted by \(\LD_N(\lambda)\), or simply
\(\LD(\lambda)\).  Given $0\leq l<N$, a \emph{partial labelling} of
\(\lambda \in \Dyck_{N}\) is a labelling where \(0\) occurs exactly
$l$ times and never on the line \(x=0\). We denote the set of these
partial labellings by \(\LD_{N,l}(\lambda)\).

To each labelling \(P \in \LD(\lambda)\) is associated a statistic
\(\dinv(P)\), defined to be the number of pairs \((i < j)\) such that
either
\begin{equation} \label{e:rconditions}
\begin{cases}
   r_i(\lambda)=r_j(\lambda) \text{ and } P_i<P_j \;\;{\rm or}\\
   r_i(\lambda)=r_j(\lambda)+1 \text{ and } P_i>P_j \,.
\end{cases}
\end{equation}
The {\it weight} of a labelling \(P\) is defined so zero labels do not
contribute, by
\begin{equation}\label{e:xweight}
x^{\wt_+(P)} = \prod_{i\in [N]: P_i\neq 0}x_{P_i}\,.
\end{equation}
This is equivalent to letting \(x_0 = 1\) in \(x^{\wt(P)} := \prod_{i
\in [N]} x_{P_i} \).
\end{defn}

The above defines the right hand side of~\eqref{e:dc}, with $\langle
z^{n-k}\rangle$ denoting the coefficient of $z^{n-k}$.

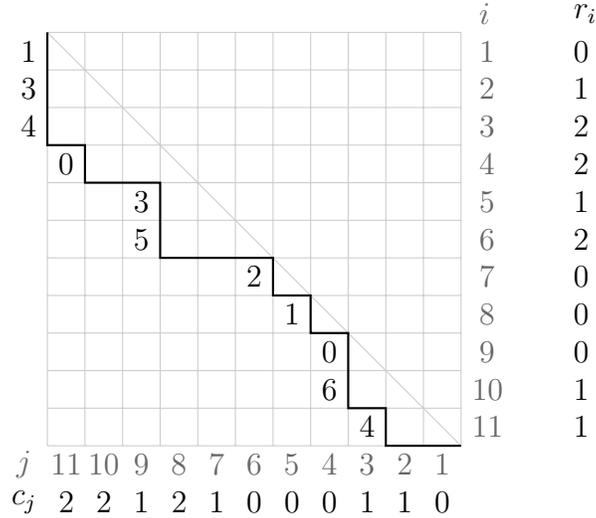
\begin{figure}
\begin{tikzpicture}[xscale = 0.5,yscale = 0.5]
	\draw[black!20] (0,0) grid (11,11);
	\draw[black!20] (0,11)--(11,0);
	\draw[thick] (0,11)--(0,8)--(1,8)--(1,7)--(3,7)--(3,5)--(6,5)--(6,4)--(7,4)--(7,3)--(8,3)--(8,1)--(9,1)--(9,0)--(11,0);
	\node[left] at (0,10.5) {$1$};
	\node[left] at (0,9.5) {$3$};
	\node[left] at (0,8.5) {$4$};
	\node[left] at (1,7.5) {$0$};
	\node[left] at (3,6.5) {$3$};
	\node[left] at (3,5.5) {$5$};
	\node[left] at (6,4.5) {$2$};
	\node[left] at (7,3.5) {$1$};
	\node[left] at (8,2.5) {$0$};
	\node[left] at (8,1.5) {$6$};
	\node[left] at (9,0.5) {$4$};
	\node[black!60] at (10.5,-0.5) {$1$};
	\node[black!60] at (9.5,-0.5) {$2$};
	\node[black!60] at (8.5,-0.5) {$3$};
	\node[black!60] at (7.5,-0.5) {$4$};
	\node[black!60] at (6.5,-0.5) {$5$};
	\node[black!60] at (5.5,-0.5) {$6$};
	\node[black!60] at (4.5,-0.5) {$7$};
	\node[black!60] at (3.5,-0.5) {$8$};
	\node[black!60] at (2.5,-0.5) {$9$};
	\node[black!60] at (1.5,-0.5) {$10$};
	\node[black!60] at (.5,-0.5) {$11$};
	\node[left,black!60] at (0,-0.5) {$j\, $};
	\node[left] at (11,-1.5) {$0$};
	\node[left] at (10,-1.5) {$1$};
	\node[left] at (9,-1.5) {$1$};
	\node[left] at (8,-1.5) {$0$};
	\node[left] at (7,-1.5) {$0$};
	\node[left] at (6,-1.5) {$0$};
	\node[left] at (5,-1.5) {$1$};
	\node[left] at (4,-1.5) {$2$};
	\node[left] at (3,-1.5) {$1$};
	\node[left] at (2,-1.5) {$2$};
	\node[left] at (1,-1.5) {$2$};
	\node[left] at (0,-1.5) {$c_j$};
	\node[right,black!60] at (11.2,11.5){$i$};
	\node[right,black!60] at (11.2,10.5){$1$};
	\node[right,black!60] at (11.2,9.5) {$2$};
	\node[right,black!60] at (11.2,8.5) {$3$};
	\node[right,black!60] at (11.2,7.5) {$4$};
	\node[right,black!60] at (11.2,6.5) {$5$};
	\node[right,black!60] at (11.2,5.5) {$6$};
	\node[right,black!60] at (11.2,4.5) {$7$};
	\node[right,black!60] at (11.2,3.5) {$8$};
	\node[right,black!60] at (11.2,2.5) {$9$};
	\node[right,black!60] at (11,1.5) {$10$};
	\node[right,black!60] at (11,0.5) {$11$};
	\node at (14.3,11.5) {$r_i$};
	\node at (14.2,10.5) {$0$};
	\node at (14.2,9.5) {$1$};
	\node at (14.2,8.5) {$2$};
	\node at (14.2,7.5) {$2$};
	\node at (14.2,6.5) {$1$};
	\node at (14.2,5.5) {$2$};
	\node at (14.2,4.5) {$0$};
	\node at (14.2,3.5) {$0$};
	\node at (14.2,2.5) {$0$};
	\node at (14.2,1.5) {$1$};
	\node at (14.2,0.5) {$1$};
\end{tikzpicture}
\caption{\label{fig:PF}
A path $\lambda $ and partial labelling
$P\in\LD_{11,2}(\lambda)$, with \(\area(\lambda)=10\), \(\dinv(P) =
15\), \(x^{\wt_+(P)} = x_1^2 x_2 x_3^2 x_4^2 x_5 x_6\), and
\(x^{\wt(P)} = x_0^2 x_1^2 x_2 x_3^2 x_4^2 x_5 x_6\).}
\end{figure}

\begin{remark}\label{r:reconcilenotation}
In~\cite{HagRemWil18}, a Dyck path is a north-east lattice path lying
weakly above the line segment connecting \((0,0)\) and \((N,N)\), and
labellings increase from south to north along vertical runs.  After
reflecting the picture about a horizontal line, our conventions on
paths, labellings, and the definition of $\dinv (P)$ match those
in~\cite{HagRemWil18}.  Separately,~\cite{HHLRU} uses the same
conventions that we do for Dyck paths, but defines labellings to
increase from south to north, and defines $\dinv (P)$ with the
inequalities in~\eqref{e:rconditions} reversed.  However, since the
sum
\begin{equation} \label{eq:dinv-wt-gen-fn} \sum_{P \in \LD(\lambda)}
q^{\dinv(P)} x^{\wt(P)}\,
\end{equation}
is a symmetric function~\cite{HHLRU}, it is unchanged if we reverse
the ordering on labels, after which the conventions in~\cite{HHLRU}
agree with those used here.
\end{remark}

We prefer another slight modification based on the following lemma
which was mentioned in~\cite{HagRemWil18} without details.

\begin{lemma}\label{lem:r-prod=c-prod}
For any Dyck path \(\lambda\in\Dyck_N\), we have
\begin{equation} \label{eq:ri-is-ci}
\prod_{\substack{1<i\leq N\\r_{i}(\lambda)=r_{i-1}(\lambda)+1}} \left(1+
z\, t^{-r_i(\lambda)}\right) = \prod_{\substack{1 <i \leq N \\
c_i(\lambda) = c_{i-1}(\lambda)+1}} (1+z\, t^{-c_i(\lambda)}) \,,
\end{equation}
where \(c_i(\lambda)=r_i(\lambda^*)\) is the contribution to
\(|\delta/\lambda|\) from boxes in the $i$-th column, numbered from
right to left.
\end{lemma}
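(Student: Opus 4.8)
The plan is to reinterpret both sides of~\eqref{eq:ri-is-ci} as products over distinguished vertices of the path $\lambda$, indexed by their height, and then to show that the two families of vertices carry the same multiset of heights. First I would encode $\lambda \in \Dyck_N$ by its word in south and east steps, recording at each vertex the height $g = (\#\,\text{south steps}) - (\#\,\text{east steps})$ taken so far; thus $g$ starts and ends at $0$, rises by $1$ across a south step, falls by $1$ across an east step, and stays $\geq 0$ throughout, the last fact being exactly the Dyck condition. A one-line computation identifies $r_i(\lambda)$ with the height at the vertex immediately preceding the $i$-th south step: before that step the path has made $i-1$ south steps and some number $e$ of east steps, so $r_i(\lambda) = (i-1) - e = g$. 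Hence $r_i(\lambda) = r_{i-1}(\lambda)+1$ holds precisely when the $i$-th south step is immediately preceded by another south step, i.e.\ when the vertex at its foot, of height $r_i(\lambda)$, has both incident steps pointing up. Calling such a vertex a \emph{double rise}, the left-hand product becomes $\prod_v \bigl(1 + z\,t^{-h(v)}\bigr)$ over all double rises $v$, where $h(v)$ denotes the height of $v$.

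Next I would handle the right-hand side via the transpose symmetry $\lambda \mapsto \lambda^*$, which interchanges rows with columns and south steps with east steps and fixes $\delta$. Reading the word of $\lambda^*$ as the reverse of that of $\lambda$ with south and east swapped, one checks that the height function of $\lambda^*$ is the reverse of that of $\lambda$; in particular $\lambda^*$ has the same multiset of vertex heights as $\lambda$, and a double rise of $\lambda^*$ at height $h$ corresponds to a vertex of $\lambda$ at height $h$ both of whose incident steps point down, a \emph{double fall}. Applying the previous paragraph to $\lambda^*$ and using $c_i(\lambda) = r_i(\lambda^*)$, the right-hand product becomes $\prod_v \bigl(1 + z\,t^{-h(v)}\bigr)$ over all double falls $v$ of $\lambda$.

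With this reduction the lemma is equivalent to the purely combinatorial assertion that the multiset of heights of double rises equals the multiset of heights of double falls, which I would prove level by level. Fix a height $h \geq 1$ and let $m_h$ be the number of south steps from height $h-1$ to $h$, which equals the number of east steps from $h$ to $h-1$ because $\lambda$ begins and ends below height $h$. Classifying the vertices at height $h$ by the directions of their two incident steps, the number with an incoming up-step is $m_h$ and the number with an outgoing down-step is also $m_h$; subtracting the common count of peaks (up-in, down-out) from each shows that the number of double rises at height $h$ equals the number of double falls at height $h$. Collecting the factor $\bigl(1 + z\,t^{-h}\bigr)$ over all $h$ then yields the claimed equality. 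The only delicate bookkeeping is keeping straight the height \emph{before} versus \emph{after} a step and the orientation reversal under transposition; once the height encoding is set up cleanly, the essential content is the one-line level count, and I expect no real obstacle beyond that.
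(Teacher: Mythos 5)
Your proof is correct, but it takes a genuinely different route from the paper's. Both arguments start from the same observation: the condition $r_i(\lambda)=r_{i-1}(\lambda)+1$ means the path has two consecutive south steps (rows $i-1,i$), and $c_i(\lambda)=c_{i-1}(\lambda)+1$ means two consecutive east steps. From there, the paper reads the step word as a parenthesis sequence, matching each south step with an east step to its right; the matching satisfies $r_i(\lambda)=c_j(\lambda)$ when the $i$-th south step pairs with the $j$-th east step (numbered right to left), and it sends the leftmost member of each double south step to the rightmost member of a double east step, so the two products agree factor by factor via an explicit bijection. You instead re-express both sides through the height function: the left side as a product of $\bigl(1+z\,t^{-h(v)}\bigr)$ over double rises $v$, and the right side---using transposition and the fact that the height profile of $\lambda^*$ is the reverse of that of $\lambda$---as the same product over double falls; you then prove equality of the two multisets of heights by a level count, using that at each height $h\geq 1$ the number of up-crossings equals the number of down-crossings and subtracting the common count of peaks, so that $\#\{\text{double rises at height }h\}=\#\{\text{double falls at height }h\}$. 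Your counting argument is somewhat more self-contained (it avoids verifying which member of a double step pairs with which under the parenthesis matching, a point the paper needs), while the paper's matching is sharper in that it yields a canonical bijection between the factors rather than just an equality of multisets; either conclusion suffices, since the products depend only on the multisets of values.
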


\begin{proof}
The condition $r_{i}(\lambda)=r_{i-1}(\lambda)+1$ means that
$\lambda $ has consecutive south steps in rows $i-1$ and $i$ with no
intervening east step.  Similarly, $c_{i}(\lambda )=c_{i-1}(\lambda )+1$
if and only if $\lambda $ has consecutive east steps in columns $i-1$
and $i$ (numbered right to left).  Consider the word formed by listing
the steps in $\lambda $ in the southeast direction from $(0,N)$ to
$(N,0)$, as shown here for the example in Figure~\ref{fig:PF}.
\begin{equation*}
    \begin{tikzpicture}
      \newcounter{parennum}; \setcounter{parennum}{0}; \foreach \lab
      in
      {"S","S","S","E","S","E","E","S","S","E","E","E","S","E","S","E","S","S","E","S","E","E"}
      { \node at (0.3*\theparennum,0)
        {\pgfmathparse{\lab}\pgfmathresult};
        \addtocounter{parennum}{1}; }
      \draw[yshift=-3mm] (0,0) --
      (0,-0.3) -- (3.3,-0.3) -- (3.3,0);
      \draw[yshift=-3mm] (0.3,0) --
      (0.3,-0.2) -- (1.8,-0.2) -- (1.8,0);
      \draw[yshift=-3mm] (2.1,0)
      -- (2.1,-0.2) -- (3,-0.2) -- (3,0);
      \draw[yshift=-3mm]
      (4.8,0) -- (4.8,-0.2) -- (6.3,-0.2) -- (6.3,0);
    \end{tikzpicture}
\end{equation*}
Treating south and east steps as left and right parentheses,
each south step pairs with an east step to its right, and we have
$r_{i}(\lambda ) = c_{j}(\lambda )$ if the $i$-th south step (numbered
left to right) pairs with the $j$-th east step (numbered right to
left).  Furthermore, the leftmost member of each double south step
pairs with the rightmost member of a double east step, as indicated in
the word displayed above.

Since each index \(i-1\) such that
\(r_i(\lambda)= r_{i-1}(\lambda)+1\) pairs with an index $j-1$
such that
\(c_j(\lambda) = c_{j-1}(\lambda)+1\), we have
\begin{equation}\label{e:rprod-cprod}
\prod_{\substack{1 < i \leq N \\ r_i(\lambda) = r_{i-1}(\lambda)+1}}
(1+z\, t^{-r_{i-1}(\lambda)-1}) 
\,\,\, = \!\!\! \prod_{\substack{1 < j \leq N \\
c_j(\lambda) = c_{j-1}(\lambda)+1}} (1+z\, t^{-c_{j-1}(\lambda)-1}) \,.
\end{equation}
Now \eqref{eq:ri-is-ci} follows.
\end{proof}

Setting $N=n+l$ and $m=k+l$, the right hand side of \eqref{e:dc}, 
or the {\bf combinatorial side of the Extended Delta Conjecture}, 
is equal to
\begin{equation}\label{eq:EE}
 \langle z^{N-m} \rangle \sum_{\substack{\lambda\in\Dyck_{N}\\
P\in\LD_{N,l}(\lambda)}}
t^{|\delta/\lambda|} \, q^{\dinv(P)} \,
x^{\wt_+(P)} \prod_{\substack{1 <i \leq N \\ c_i(\lambda)=
c_{i-1}(\lambda)+1}} (1+z\, t^{-c_i(\lambda)}) \,.
\end{equation}

\section{Background on the Schiffmann algebra \texorpdfstring{$\Ecal$}{E}}
\label{s:reform-symmetric}
\subsection{}
From work of Feigin and Tsymbauliak~\cite{FeigTsym11} and Schiffmann
and Vasserot~\cite{SchiVass13}, we know that the operators $f[B]$ in
\eqref{e:sside} form part of an action of the elliptic Hall algebra
$\Ecal $ of Burban and Schiffmann~\cite{BurbSchi12,Schiffmann12}, or
{\em Schiffmann algebra} for short, on the algebra of symmetric
functions.  In~\cite{paths}, we used this action to express the
symmetric function side of a generalized Shuffle Theorem as the
polynomial part of an explicit infinite series of $\GL_{l}$ characters.
Here we derive a similar expression (Theorem~\ref{t:ourLHS}) for the
symmetric function side \eqref{e:sside} of the Extended Delta
Conjecture.

For this purpose, we need a deeper study of the Schiffmann algebra
than we did in~\cite{paths}, where a fragment of the theory was
enough.  We start with a largely self-contained description of $\Ecal
$ and its action on $\Lambda $, although we occasionally refer
to~\cite{paths} for the restatements of results
from~\cite{BurbSchi12,Schiffmann12,SchiVass13} in our notation, and
for some proofs.  A precise translation between our notation and that
of~\cite{BurbSchi12,Schiffmann12,SchiVass13} can be found
in~\cite[eq.~(25)]{paths}.  In the presentation of $\Ecal $ and its
action on $\Lambda $, we freely use plethystic substitution, defined
in \S \ref{ss:symmetric-side}.  Indeed, the ability to do so is a
principal reason why we prefer the notation used here to that in the
foundational papers on the Schiffmann algebra.

\subsection{Description of \texorpdfstring{$\Ecal$}{E}}
\label{ss:Schiffmann}

Let $\kk = \QQ (q,t)$, as in \S \ref{s:extended-delta}.  The
Schiffmann algebra $\Ecal $ is generated by a central Laurent
polynomial subalgebra $F = \kk [c_{1}^{\pm 1}, c_{2}^{\pm 1}]$ and a
family of subalgebras $\Lambda _{F}(X^{m,n})$ isomorphic to the
algebra of symmetric functions $\Lambda _{F}(X)$ over $F$, one for
each pair of coprime integers $(m,n)$.  These are subject to defining
relations spelled out below.

For any algebra $A$ containing a copy of $\Lambda $, there is an
{\em adjoint action} of $\Lambda $ on $A$ arising from the Hopf
algebra structure of $\Lambda $.  Using two formal alphabets $X$ and
$Y$ to distinguish between the tensor factors in $\Lambda \otimes
\Lambda \cong \Lambda (X) \Lambda (Y)$, the coproduct and antipode for
the Hopf algebra structure are given by the plethystic substitutions
\begin{equation}\label{e:Lambda-coprod}
\Delta f = f[X+Y],\qquad S(f) = f[-X].
\end{equation}
The adjoint action of $f\in \Lambda $ on $\zeta \in A$ is then given by
\begin{equation}\label{e:Ad-action}
(\Ad f)\, \zeta = \sum_{i} f_{i} \, \zeta \, g_{i},\quad \text{where}
\quad f[X-Y] = \sum_{i} f_{i}(X)g_{i}(Y),
\end{equation}
since the formula on the right is another way to write $(1\otimes S)\Delta
f = \sum _{i} f_{i}\otimes g_{i}$.  More explicitly, we have
\begin{equation}\label{e:Ad-examples}
(\Ad p_{n})\, \zeta =[p_{n},\zeta ]\quad \text{and} \quad (\Ad
h_{n})\, \zeta = \sum _{j+k=n} (-1)^{k} h_{j}\, \zeta \, e_{k}.
\end{equation}
The last formula can be expressed for all $n$ at once as a generating
function identity
\begin{equation}\label{e:Ad-Omega}
(\Ad \Omega [zX])\, \zeta = \Omega [zX]\, \zeta\,  \Omega [-zX],
\end{equation}
where 
\begin{equation}\label{e:Omega}
\Omega (X) = \sum _{n = 0}^{\infty } h_{n}(X).
\end{equation}

We fix notation for the quantities
\begin{equation}\label{e:M-and-M-hat}
M=(1-q)(1-t),\qquad \widehat{M} = (1-(q\, t)^{-1})M,
\end{equation}
which play a role in the presentation of $\Ecal $ and will be referred
to again later.

\subsubsection{Basic structure and symmetries}
\label{sss:structure}

The algebra $\Ecal$ is $\ZZ^2$ graded with the central subalgebra $F$
in degree $(0,0)$ and $f(X^{m,n})$ in degree $(dm,dn)$ for $f(X) $ of
degree $d$ in $\Lambda (X)$.

The universal central extension $\widehat{\SL_{2}(\ZZ )}\rightarrow
\SL _{2}(\ZZ )$ acts on the set of tuples
\begin{equation}\label{e:Z2-hat}
\{(m,n,\theta )\in (\ZZ ^{2}\setminus {\boldsymbol 0})\times \RR \mid 
\text{$\theta $ is a value of $\arg (m+in)$} \},
\end{equation}
lifting the $\SL _{2}(\ZZ )$ action on pairs $(m,n)$, with the central
subgroup $\ZZ $ generated by the `rotation by $2\pi $' map
$(m,n,\theta )\mapsto (m,n,\theta +2\pi )$.  The group $\widehat{\SL
_{2}(\ZZ )}$ acts on $\Ecal $ by $\kk $-algebra automorphisms,
compatibly with the action of $\SL _{2}(\ZZ )$ on the grading group
$\ZZ ^{2}$.  Before giving the defining relations of $\Ecal $, we
specify how $\widehat{\SL _{2}(\ZZ )}$ acts on the generators.

For each pair of coprime integers $(m,n)$, we introduce a family of
alphabets $X^{m,n}_\theta$, one for each value $\theta $ of $\arg
(m+in)$, related by
\begin{equation}\label{e:Xmn-theta}
X^{m,n}_{\theta+2\pi} = c_1^m c_2^n X^{m,n}_{\theta}.
\end{equation}
We make the convention that $X^{m,n}$ without a subscript means
$X^{m,n}_{\theta }$ with $\theta \in (-\pi ,\pi ]$.
For comparison, the implied convention in \cite{paths} is 
$\theta\in[-\pi,\pi)$.
The subalgebra
$\Lambda _{F}(X^{m,n}) = \Lambda _{F}(X^{m,n}_{\theta })$ only depends
on $(m,n)$ and so does not depend on the choice of branch for the
angle $\theta $.  We will also refer below to the subalgebras
$\Lambda_{\kk } (X^{m,n})$, which do depend on our choice of branch,
unless we specialize the central parameters $c_{1},c_{2}$ to elements
of $\kk $.

The $\widehat{\SL _{2}(\ZZ )}$ action is now given by $\rho \cdot
f(X^{m,n}_{\theta }) = f(X^{m',n'}_{\theta '})$ for $f(X)\in \Lambda
_{\kk }(X)$ where $\rho \in \widehat{\SL _{2}(\ZZ )}$ acts on the
indexing data in \eqref{e:Z2-hat} by $\rho \cdot (m,n,\theta ) =
(m',n',\theta ')$.  Note that if $m,n$ are coprime, then so are
$m',n'$.  The action on $F$ factors through the action of $\SL
_{2}(\ZZ )$ on the group algebra $\kk \cdot \ZZ ^{2}\cong F$.

For instance, the `rotation by $2\pi$' element $\rho \in \widehat{\SL
_{2}(\ZZ )}$ fixes $F$, and has $\rho \cdot f(X_\theta^{m,n}) =
f(X_{\theta+2\pi}^{m,n}) = f[c_{1}^{m} c_{2}^{n}X^{m,n}_{\theta }]$.
Thus $\rho $ coincides with multiplication by $c_{1}^{r} c_{2}^{s}$ in
degree $(r,s)$, and automatically preserves all relations that respect
the $\ZZ ^{2}$ grading.

We now turn to the defining relations of $\Ecal$.  Apart from the
relations implicit in $F = \kk [c_{1}^{\pm 1},c_{2}^{\pm 1}]$ being
central and each $\Lambda _{F}(X^{m,n})$ being isomorphic to $\Lambda
_{F}(X)$, these fall into three families: Heisenberg relations,
internal action relations and axis-crossing relations.

\subsubsection{Heisenberg relations}
\label{sss:Heisenberg}

Each pair of subalgebras $\Lambda _{F}(X^{m,n})$ and $\Lambda
_{F}(X^{-m,-n})$ in degrees along opposite rays in $\ZZ ^{2}$ satisfy
Heisenberg relations
\begin{equation}\label{e:Heisenberg}
[p_k(X_\theta^{-m,-n}),\, p_l(X_{\theta+\pi}^{m,n})] = \delta_{k,l}\,
k \, p_k[(c_{1}^{m}c_{2}^{n}-1) / \widehat{M}],
\end{equation}
where $\widehat{M}$ is given by \eqref{e:M-and-M-hat}.  As an
exercise, the reader can check, using \eqref{e:Xmn-theta}, that the
relations in \eqref{e:Heisenberg} are consistent with swapping the
roles of $\Lambda _{F} (X^{m,n})$ and $\Lambda _{F}(X^{-m,-n})$.

\subsubsection{Internal action relations}
\label{sss:internal-action}

The internal action relations describe the adjoint action of each
$\Lambda _{F}(X^{m,n})$ on $\Ecal $.  For simplicity, we write these
relations, and also the axis-crossing relations below, with $\Lambda
_{F}(X^{1,0})$ distinguished.  The full set of relations is understood
to be given by closing the stated relations under the $\widehat{\SL
_{2}(\ZZ )}$ action.

Bearing in mind that $X^{m,n}$ means $X^{m,n}_{\theta }$ with $\theta
\in (-\pi ,\pi ]$, the relations for the internal action of $\Lambda
_{F}(X^{1,0})$ are:
\begin{equation}\label{e:internal-action}
\begin{aligned}
(\Ad f(X^{1,0}))\, p_1(X^{m,1}) & = (\omega f)[z] \Bigl| z^k \mapsto
p_1(X^{m+
k,1})\\
(\Ad f(X^{1,0}))\, p_1(X^{m,-1}) & = (\omega f)[-z] \Bigl| z^k \mapsto
p_1(X^{m+ k,-1})
\end{aligned}
\end{equation}

\subsubsection{Axis-crossing relations}
\label{sss:axis-crossing}

Again distinguishing $\Lambda _{F}(X^{1,0})$ and taking angles on the
branch $\theta \in (-\pi ,\pi ]$, the final set of relations is the
closure under the $\widehat{\SL _{2}(\ZZ )}$ action of
\begin{equation}\label{e: axis-crossing}
[p_1(X^{b,-1}),\, p_1(X^{a,1})] =
-\frac{e_{a+b}[-\widehat{M}X^{1,0}]}{\widehat{M}} \quad \text{for
$a+b>0$}.
\end{equation}
More generally, rotating this relation by $\pi$ determines
$[p_1(X^{b,-1}),\, p_1(X^{a,1})]$ for $a + b < 0$, and the Heisenberg
relations determine it when $a + b = 0$. Combining these gives
\begin{equation}\label{e: general axis-crossing}
[p_1(X^{b,-1}),\, p_1(X^{a,1})] = -\frac{1}{\widehat{M}}
\begin{cases}
      	e_{a+b}[-\widehat{M}X^{1,0}] & a+b>0\\
      	1-c_1^{-b}c_2 & a + b = 0\\
      	-c_1^{-b}c_2e_{-(a+b)}[-\widehat{M}X^{-1,0}] & a+b < 0\,.\\
\end{cases}
\end{equation}

\subsubsection{Further remarks}
\label{sss:remarks}

Define {\em upper} and {\em lower half} subalgebras $\Ecal^{* , >0},
\Ecal ^{*, <0} \subseteq \Ecal$ to be generated by the $\Lambda
_{F}(X^{m,n})$ with $n > 0$ or $n < 0$, respectively.  Using the
$\widehat{\SL _2(\ZZ )}$ image of the relations in \eqref{e: axis-crossing},
one can express any $e_{k}[-\widehat{M}
X^{m,n}]$ for $n>0$ in terms of iterated commutators of the elements
$p_{1}(X^{a,1})$.
This shows that $\{p_1(X^{a,1})\mid a \in \ZZ\}$
generates $\Ecal ^{*, >0}$ as an $F$-algebra.  Similarly,
$\{p_1(X^{a,-1})\mid a \in \ZZ\}$ generates $\Ecal ^{*, <0}$.

The internal action relations give the adjoint action of $\Lambda
_{F}(X^{1,0})$ on the space spanned by $\{p_1(X^{a,\pm 1})\mid a \in
\ZZ\}$.  Using the formula $(\Ad f)(\zeta _{1}\zeta _{2}) = \sum ((\Ad
f_{(1)})\zeta _{1})((\Ad f_{(2)})\zeta _{2})$, where $\Delta f = \sum
f_{(1)}\otimes f_{(2)}$ in Sweedler notation, this determines the
adjoint action of $\Lambda _{F}(X^{1,0})$ on $\Ecal ^{*, >0}$ and
$\Ecal ^{*, <0}$.  The Heisenberg relations give the adjoint action of
$\Lambda _{F}(X^{1,0})$ on $\Lambda _{F}(X^{-1,0})$, while $\Lambda
_{F}(X^{1,0})$ acts trivially on itself, with $(\Ad f)\, g = f[1]\,
g$.

Together these determine the adjoint action of $\Lambda _{F}(X^{1,0})$
on the whole algebra $\Ecal $.  By symmetry, the same holds for 
the adjoint action of any $\Lambda _{F}(X^{m,n})$.

\subsubsection{Anti-involution}
\label{sss:anti-involution}

One can check from the defining relations above that $\Ecal $ has a
further symmetry given by an involutory anti-automorphism (product
reversing automorphism)
\begin{equation}\label{e:involution-Phi}
\begin{gathered}
\Phi \colon \Ecal \rightarrow \Ecal \\
\Phi (g(c_{1},c_{2})) = g(c_{2}^{-1},c_{1}^{-1}),\quad \Phi
(f(X^{m,n}_{\theta })) = f(X^{n,m}_{\pi /2-\theta }).
\end{gathered}
\end{equation}
Note that $\Phi $ is compatible with reflecting degrees in $\ZZ ^{2}$
about the line $x = y$.  Together with $\widehat{\SL _{2}(\ZZ )}$ it
generates a $\widehat{\GL _{2}(\ZZ )}$ action on $\Ecal $ for which
$\rho \in \widehat{\GL _{2}(\ZZ )}$ is an anti-automorphism if
$\widehat{\GL _{2}(\ZZ )}\rightarrow \GL _{2}(\ZZ )\overset{\det
}{\rightarrow }\{\pm 1 \}$ sends $\rho $ to $-1$.

\subsection{Action of \texorpdfstring{$\Ecal $}{E} on
\texorpdfstring{$\Lambda $}{\textLambda}}
\label{ss:E-action}

We write $f^{\bullet }$ for the operator of multiplication by a
function $f$ to better distinguish between operator expressions such
as $(\omega f)^{\bullet }$ and $\omega \cdot f^{\bullet }$.  For $f$ a
symmetric function, $f^{\perp }$ denotes the $\langle -, - \rangle$
adjoint of $f^{\bullet }$.

Here and again later on, we use an overbar to indicate inverting the
variables in any expression; for example
\begin{equation}\label{e:overbar-example}
\overline{M} = (1-q^{-1})(1-t^{-1}).
\end{equation}
We extend the notation in \eqref{e:eigen-operators} accordingly, setting
\begin{equation}\label{e:f(B-bar)}
f[\overline{B}]\, \Htild _{\mu } = f[B_{\mu }(q^{-1},t^{-1})]\, \Htild
_{\mu }\,.
\end{equation}

\begin{prop}[{\cite[Prop 3.3.1]{paths}}]
\label{prop:E-action}
There is an action of $\Ecal $ on $\Lambda $ characterized as follows.

\noindent
(i) The central parameters $c_{1},c_{2}$ act as scalars
\begin{equation}\label{e:center-action}
c_{1}\mapsto 1,\quad
c_{2}\mapsto (q\, t)^{-1}.
\end{equation}
(ii) The subalgebras $\Lambda _{\kk }(X^{\pm 1,0})$ act as
\begin{equation}\label{e:X10-action}
f(X^{1,0})\mapsto (\omega
f)[B-1/M],\quad f(X^{-1,0})\mapsto (\omega f)[\overline{1/M-B}] .
\end{equation}
(iii) The subalgebras $\Lambda _{\kk }(X^{0,\pm 1})$ act as
\begin{equation}\label{e:X01-action}
f(X^{0,1})\mapsto f[-X/M]^{\bullet },\quad f(X^{0,-1})\mapsto
f(X)^{\perp }.
\end{equation}
\end{prop}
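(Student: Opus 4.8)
The plan is to realize the four assignments in (i)--(iii), read as homomorphisms on the generating subalgebras $\Lambda_\kk(X^{\pm1,0})$, $\Lambda_\kk(X^{0,\pm1})$ and the center $F$, as a genuine action $\pi\colon\Ecal\to\operatorname{End}(\Lambda)$, and to argue this action is forced. The first move is to invoke the generation statement of \S\ref{sss:remarks}: the horizontal subalgebras $\Lambda_\kk(X^{\pm1,0})$ together with the vertical subalgebras $\Lambda_\kk(X^{0,\pm1})$ and $F$ generate $\Ecal$, since the internal-action relations \eqref{e:internal-action} build every $p_1(X^{a,\pm1})$ from the seeds $p_1(X^{0,\pm1})$ by iterated adjoint action, and these generate $\Ecal^{*,>0}$ and $\Ecal^{*,<0}$. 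This yields uniqueness at once and reduces existence to checking that $\pi$ respects the defining relations among these generators, the $\widehat{\SL_2(\ZZ)}$-images being handled afterward via generation and the symmetries of \S\ref{sss:structure} and \S\ref{sss:anti-involution}.

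Two checks are routine. Each formula is manifestly an algebra homomorphism on its subalgebra --- plethysm followed by multiplication for $X^{0,1}$, the skewing map $f\mapsto f^\perp$ for $X^{0,-1}$ (a homomorphism since $\Lambda$ is commutative), and the eigenoperator map $f\mapsto(\omega f)[\,\cdot\,]$ for $X^{\pm1,0}$ --- and centrality of $F$ is automatic since $c_1,c_2$ act as scalars, which also collapses the distinction between $\Lambda_F$ and $\Lambda_\kk$ in the representation. The horizontal Heisenberg relation \eqref{e:Heisenberg} then holds trivially: its right-hand side carries a factor $c_1-1$ that vanishes under $c_1\mapsto1$, while its left-hand side vanishes because $(\omega f)[B-1/M]$ and $(\omega f)[\overline{1/M-B}]$ are simultaneously diagonal on $\{\Htild_\mu\}$ and hence commute. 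The vertical Heisenberg relation reduces to the canonical commutation $[p_k^\perp,p_l^\bullet]=k\,\delta_{k,l}$: writing $\pi(p_l(X^{0,1}))=p_l[-X/M]^\bullet=-p_l^\bullet/((1-q^l)(1-t^l))$ and $\pi(p_k(X^{0,-1}))=p_k^\perp$, both sides of \eqref{e:Heisenberg} come out to $-k\,\delta_{k,l}/((1-q^k)(1-t^k))$ after the specialization $c_2\mapsto(qt)^{-1}$ and the plethystic simplification $(c_2-1)/\widehat{M}\mapsto-1/M$.

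The real content lies in the interaction of the two families, governed by \eqref{e:internal-action} and the axis-crossing relations \eqref{e: general axis-crossing}. Read under $\pi$, the relation \eqref{e:internal-action} becomes a recursion $[\,p_n[B-1/M],\,\pi(p_1(X^{m,1}))\,]=\pi(p_1(X^{m+n,1}))$ (with the sign from $\omega$ cancelling), and symmetrically for the lower seeds; I would use this to produce closed forms for the raising operators $\pi(p_1(X^{a,1}))$ and lowering operators $\pi(p_1(X^{a,-1}))$ as iterated commutators of Macdonald eigenoperators against the seed multiplication and skewing operators. Consistency of this recursion --- that $\pi(p_1(X^{a,\pm1}))$ is independent of how a shift is decomposed --- follows from $\Ad$ being an algebra action of the commutative algebra $\Lambda_\kk(X^{1,0})$, whose images under $\pi$ form a commuting family of eigenoperators. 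With these operators explicit, the remaining task is to verify the bracket \eqref{e: general axis-crossing}: that $[\pi(p_1(X^{b,-1})),\pi(p_1(X^{a,1}))]$ equals $-\widehat{M}^{-1}e_{a+b}[-\widehat{M}X^{1,0}]$ as a horizontal eigenoperator for $a+b>0$, with the prescribed central corrections when $a+b\le0$.

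I expect \eqref{e: general axis-crossing} to be the main obstacle, as it is the single place where the curvature of $\Ecal$ genuinely enters and where formal manipulation no longer suffices. To organize it I would package the infinite family of scalar relations into one generating-function identity using $\Omega[zX]$ and \eqref{e:Ad-Omega}, so that the three regimes $a+b>0$, $a+b=0$, $a+b<0$ emerge from the three parts of a single expansion in the formal parameter $z$, with the specializations $c_1\mapsto1$ and $c_2\mapsto(qt)^{-1}$ producing exactly the factors $M$ and $\widehat{M}$. Finally, I would use the anti-involution $\Phi$ of \S\ref{sss:anti-involution} to pass between the $X^{1,0}$ and $X^{-1,0}$ formulas and between the $a+b>0$ and $a+b<0$ cases, and lean on the generation statement to conclude that respecting these relations among the chosen generators forces $\pi$ to respect the remaining $\widehat{\SL_2(\ZZ)}$-images, completing the construction and the characterization.
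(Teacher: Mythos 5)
You should first note that the paper does not prove Proposition~\ref{prop:E-action} at all: it is imported verbatim from \cite[Prop.\ 3.3.1]{paths}, where it amounts to a translation of the Feigin--Tsymbauliak/Schiffmann--Vasserot construction \cite{FeigTsym11,SchiVass13} into plethystic notation. So a blind proof must reconstruct that work, and yours stops exactly where the content begins. Your peripheral checks are correct: uniqueness does follow from the generation statement of \S\ref{sss:remarks}; the horizontal Heisenberg relation holds because $\Lambda_\kk(X^{\pm 1,0})$ act simultaneously diagonally on $\{\Htild_\mu\}$ while $p_k[(c_1^{\pm 1}-1)/\widehat{M}]\mapsto 0$; and both sides of the vertical Heisenberg relation do equal $-k\,\delta_{k,l}/\bigl((1-q^k)(1-t^k)\bigr)$. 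But the axis-crossing family \eqref{e: general axis-crossing}, which you yourself flag as ``the main obstacle,'' is never verified --- you only describe how you would organize the computation. Under $\pi$ these relations are genuinely hard operator identities: for example, with $b=0$ they assert that the commutator of the skewing operator $e_1(X)^{\perp}$ with $\nabla^{a}e_1(X)^{\bullet}\nabla^{-a}$ equals an explicit Macdonald eigenoperator, which is essentially the content of the operator identities of \cite{BeGaHaTe99,GaHaTe99} and of Schiffmann--Vasserot's theorem. A proof that ends with ``I would package this into a generating function'' has not proved the proposition; it has restated it.

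The second gap is structural and would make the strategy fail even if the distinguished relations were all checked. The defining relations of $\Ecal$ are the full $\widehat{\SL_2(\ZZ)}$-closure of the stated ones, and your final step --- that respecting the distinguished relations ``forces $\pi$ to respect the remaining $\widehat{\SL_2(\ZZ)}$-images'' via generation --- confuses a symmetry of the algebra with a symmetry of the representation. The representation has central character $c_1^m c_2^n \mapsto (q\,t)^{-n}$ by \eqref{e:center-action}; twisting $\pi$ by $\gamma\in\SL_2(\ZZ)$ changes this character unless $\gamma$ is a transvection $(m,n)\mapsto(m+kn,n)$, and only those symmetries are implemented on $\Lambda$ (by conjugation with $\nabla^k$, itself a theorem, \cite[Lemma 3.4.1]{paths}, proved \emph{after} the action is constructed). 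For any other $\gamma$ --- e.g.\ the rotation by $\pi$ used to generate the $a+b<0$ case of \eqref{e: general axis-crossing}, or the images of \eqref{e:internal-action} under $\Lambda_F(X^{-1,0})$ --- the transported relations are new constraints on the same generators that cannot be deduced from the verified ones by any equivariance, and must be checked separately. This gap already bites at the level of definitions: the relations \eqref{e:internal-action} only raise the index, so they build $\pi(p_1(X^{a,1}))$ from the seed $e_1(X)^{\bullet}$ only for $a\geq 0$; reaching $a<0$ requires precisely the closure relations you propose to get for free. (Also, $\Phi$ exchanges $X^{1,0}$ with $X^{0,1}$, not with $X^{-1,0}$, so it cannot play the role you assign to it.) To close the argument one needs either a theorem giving a presentation of $\Ecal$ by the distinguished relations alone, or a direct verification of the closure --- which is what the cited construction of \cite{SchiVass13} supplies and what the paper, reasonably, does not reprove.
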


We will make particular use of operators representing the action on
$\Lambda $ of elements $p_{1}(X^{a,1})$ and $p_{1}(X^{1,a})$ in $\Ecal
$.  For the first we need the operator $\nabla$, defined
in~\cite{BeGaHaTe99} as an eigenoperator on the modified Macdonald
basis by
\begin{equation}\label{e:nabla}
\nabla \Htild _{\mu } = t^{n(\mu )}q^{n(\mu ^{*})} \Htild _{\mu },
\end{equation}
where $n(\mu )$ is given by \eqref{e:n-lambda} and $\mu ^{*}$ denotes
the transpose partition.

For the second, we introduce the doubly infinite generating series
\begin{equation}\label{e:Dz}
D(z) = \omega \Omega [z^{-1}X] ^{\bullet }(\omega \Omega [-z M
X])^{\perp }\,,
\end{equation}
where $\Omega (X)$ is given by \eqref{e:Omega}.

\begin{defn}
\label{def:Ea-Da}
For $a\in\ZZ$, we define operators on $\Lambda = \Lambda _{\kk }(X)$
\begin{align}
\label{e:Ea}
E_a &= \nabla^a e_1(X)^{\bullet }\,  \nabla^{-a},\\
\label{e:Db} D_a &= \langle z^{-a} \rangle D(z).
\end{align}
The operators $D_a$ are the same as in \cite{paths} and differ by a
sign $(-1)^{a}$ from those in \cite{BeGaHaTe99,GaHaTe99}.
\end{defn}

\begin{prop}
\label{prop:Da-Ea}
In the action of $\Ecal $ on $\Lambda $ given by Proposition
\ref{prop:E-action}:

\noindent
(i) the element $p_{1}[-M X^{1,a}] = -M p_{1}(X^{1,a})\in \Ecal $ acts
as the operator $D_{a}$;

\noindent 
(ii) the element $p_{1}[-M X^{a,1}] = -M p_{1}(X^{a,1})\in \Ecal $
acts as the operator $E_a$.
\end{prop}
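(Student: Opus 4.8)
The plan is to treat both parts by the same two-step scheme: compute the action on a single base element directly from Proposition~\ref{prop:E-action}, then propagate to every $a\in\ZZ$ using the internal action relations \eqref{e:internal-action} together with the fact that the $\Ecal$-action is an algebra homomorphism (so the relations become commutator identities among operators on $\Lambda$). Write $\pi(\xi)$ for the operator by which $\xi\in\Ecal$ acts. Taking $f=p_1$ in \eqref{e:internal-action} and in its $\widehat{\SL_2(\ZZ)}$-rotations, and using $\omega p_1=p_1$, $p_1[\pm z]=\pm z$, and $(\Ad p_1)\zeta=[p_1,\zeta]$, one obtains in $\Ecal$ (after fixing the branch and signs) the four recursions
\begin{align*}
[p_1(X^{1,0}),p_1(X^{a,1})] &= p_1(X^{a+1,1}), &
[p_1(X^{-1,0}),p_1(X^{a,1})] &= -\,p_1(X^{a-1,1}),\\
[p_1(X^{0,1}),p_1(X^{1,a})] &= -\,p_1(X^{1,a+1}), &
[p_1(X^{0,-1}),p_1(X^{1,a})] &= p_1(X^{1,a-1}).
\end{align*}
Applying $\pi$ and inserting the known actions of the ``axis'' elements then reduces each part to a purely operator-theoretic identity plus a base case.

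For part (ii), Proposition~\ref{prop:E-action}(iii) gives the base case $\pi(p_1(X^{0,1}))=p_1[-X/M]^\bullet=-\tfrac1M e_1^\bullet=-\tfrac1M E_0$. Since Proposition~\ref{prop:E-action}(ii) makes $\pi(p_1(X^{\pm1,0}))$ the eigenoperators $p_1[B]-1/M$ and $\overline{1/M}-p_1[\overline B]$, whose scalar summands drop out of commutators, the first row of recursions reduces (ii) to the two identities $[p_1[B],E_a]=E_{a+1}$ and $[p_1[\overline B],E_a]=E_{a-1}$. These I verify on the basis $\{\Htild_\mu\}$: multiplication by $e_1$ obeys a Macdonald--Pieri rule $e_1\Htild_\mu=\sum_\nu c_{\mu\nu}\Htild_\nu$ summed over $\nu$ obtained from $\mu$ by adding one box, so with $w_\mu=t^{n(\mu)}q^{n(\mu^*)}$ denoting the $\nabla$-eigenvalue of \eqref{e:nabla} one has $E_a\Htild_\mu=\sum_\nu c_{\mu\nu}(w_\nu/w_\mu)^a\Htild_\nu$. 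The computation then hinges on the single-box identity $w_\nu/w_\mu=q^{i-1}t^{j-1}=B_\nu-B_\mu$ for the added box $(i,j)$, which immediately yields $[p_1[B],E_a]=E_{a+1}$ and, after inverting $q,t$, $[p_1[\overline B],E_a]=E_{a-1}$. Induction from $a=0$ proves (ii) for all $a\in\ZZ$.

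For part (i), the base case is again Proposition~\ref{prop:E-action}(ii): $\pi(p_1(X^{1,0}))=p_1[B]-1/M$, which must match $-\tfrac1M D_0$, i.e. $D_0\Htild_\mu=(1-MB_\mu)\Htild_\mu$. Extracting the $z^0$ term of \eqref{e:Dz} gives $D_0=\sum_{n\ge0}e_n^\bullet(e_n[-MX])^\perp$, the classical Macdonald operator, whose spectral decomposition on the $\Htild_\mu$ with eigenvalue $1-MB_\mu$ I take as known input. The second row of recursions, together with $\pi(p_1(X^{0,1}))=-\tfrac1M e_1^\bullet$ and $\pi(p_1(X^{0,-1}))=e_1^\perp$, reduces the propagation to the generating-series commutations
\begin{align*}
[e_1^\bullet,D(z)] &= Mz\,D(z), & [e_1^\perp,D(z)] &= z^{-1}D(z).
\end{align*}
Writing $D(z)=A^\bullet B^\perp$ with $A=\omega\Omega[z^{-1}X]$ and $B=\omega\Omega[-zMX]$, the second holds because $e_1^\perp=p_1^\perp$ is a derivation that commutes with $B^\perp$ and satisfies $e_1^\perp A=z^{-1}A$; the first follows from the adjointness relation $[e_1^\bullet,B^\perp]=-(e_1^\perp B)^\perp$ combined with $e_1^\perp B=-zM\,B$. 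Comparing coefficients of $z^{-a}$ gives $[e_1^\bullet,D_a]=M D_{a+1}$ and $[e_1^\perp,D_a]=D_{a-1}$, and induction from the base case proves (i).

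The main obstacle is concentrated in two spots rather than in the bulk of the argument. The genuinely external input is the base case of (i)---the classical fact that $\sum_n e_n^\bullet(e_n[-MX])^\perp$ acts on $\Htild_\mu$ by $1-MB_\mu$---whose from-scratch proof is tantamount to recovering the characterization of the modified Macdonald basis. The one delicate bookkeeping point is extracting the correct $\widehat{\SL_2(\ZZ)}$-rotations of \eqref{e:internal-action}: one must respect the convention that $X^{m,n}$ means the branch $\theta\in(-\pi,\pi]$, since this pins down the four signs above, and a sign error there would break the inductions. Granting these, the Pieri computation for (ii) and the derivation/adjointness computation for (i) are routine.
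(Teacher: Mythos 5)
Your proposal is correct, but it follows a genuinely different route from the paper's. The paper disposes of part (i) by citing \cite[Prop 3.3.4]{paths} outright, and proves part (ii) in one stroke: from Proposition~\ref{prop:E-action}(iii) the element $p_1[-MX^{0,1}]$ acts as $e_1^\bullet$, and the conjugation symmetry $\nabla f(X^{m,n})\nabla^{-1}=f(X^{m+n,n})$ of \cite[Lemma 3.4.1]{paths}, iterated to $\nabla^a f(X^{m,n})\nabla^{-a}=f(X^{m+an,n})$, immediately identifies the action of $p_1[-MX^{a,1}]$ with $\nabla^a e_1^\bullet\nabla^{-a}=E_a$. You instead run both parts as inductions on $a$, propagating by commutator recursions extracted from the internal action relations \eqref{e:internal-action} and their $\widehat{\SL_2(\ZZ)}$-rotations, and verifying the inductive steps by direct operator computations: the Macdonald--Pieri rule plus the single-box identity $w_\nu/w_\mu=B_\nu-B_\mu=q^{i-1}t^{j-1}$ for (ii), and the derivation/adjointness identities $[e_1^\bullet,D(z)]=Mz\,D(z)$, $[e_1^\perp,D(z)]=z^{-1}D(z)$ for (i) (both of which I checked are right). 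The trade-off: the paper's argument is shorter but outsources its substance to the companion paper \cite{paths}, whereas yours is self-contained relative to this paper plus two classical inputs --- the spectral fact $D_0\Htild_\mu=(1-MB_\mu)\Htild_\mu$ from \cite{GaHaTe99} and the Pieri rule for $e_1\Htild_\mu$ --- which is a reasonable substitution, since the cited \cite[Prop 3.3.4]{paths} ultimately rests on the same classical results. The one point you flagged but did not carry out, the branch bookkeeping for the rotated relations, does work out in your favor: in each of the four cases needed, the rotation by $\pm\pi/2$ or $\pi$ of the relevant line of \eqref{e:internal-action} keeps all alphabets on the principal branch $\theta\in(-\pi,\pi]$, so no central factors $c_1^mc_2^n$ intrude and your four signs are exactly correct; a complete writeup would need to include this half-page verification.
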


\begin{proof}
Part (i) is proven in \cite[Prop 3.3.4]{paths}.
 
By \eqref{e:X01-action}, $p_1[-MX^{0,1}]$ acts on $\Lambda$ as
multiplication by $p_1[X] = e_1(X)$.  It was shown in
\cite[Lemma 3.4.1]{paths} that the action of $\Ecal $ on
$\Lambda $ satisfies the symmetry $\nabla f(X^{m,n}) \nabla^{-1} =
f(X^{m+n,n})$.  More generally, this implies $\nabla^{a} f(X^{m,n})
\nabla^{-a} = f(X^{m+an,n})$ for every integer $a$.  Hence,
$p_{1}[-MX^{a,1}]$ acts as $\nabla^{a} p_{1}[-M X^{0,1}] \nabla^{-a} =
\nabla^{a} e_{1}(X)^{\bullet }\, \nabla^{-a}$.
\end{proof}

\subsection{\texorpdfstring{$\GL_{l}$}{GL\_l} characters and the
shuffle algebra}
\label{ss:GL-characters}

As usual, the weight lattice of $\GL _{l}$ is $\ZZ ^{l}$, with Weyl
group $W = S_{l}$ permuting the coordinates.  A weight $\lambda $ is
dominant if $\lambda _{1}\geq \cdots \geq \lambda _{l}$.  A {\em
polynomial weight} is a dominant weight $\lambda $ such that $\lambda
_{l}\geq 0$.  In other words, polynomial weights of $\GL _{l}$ are
integer partitions of length at most $l$.

The algebra of virtual $\GL _{l}$ characters over $\kk $ can be
identified with the algebra of symmetric Laurent polynomials $\kk
[x_{1}^{\pm 1},\ldots,x_{l}^{\pm 1}]^{S_{l}}$.  If $\lambda $ is a
polynomial weight, the irreducible character $\chi _{\lambda }$ is
equal to the Schur function $s_{\lambda }(x_{1},\ldots,x_{l})$.  Given
a virtual $\GL _{l}$ character $f(x)= f(x_1,\dots,x_l) = \sum
_{\lambda }c_{\lambda }\chi _{\lambda }$, the partial sum over
polynomial weights $\lambda $ is a symmetric polynomial in $l$
variables, which we denote by $f(x)_{\pol }$.  We use the same
notation for infinite formal sums $f(x)$ of irreducible $\GL _{l}$
characters, in which case $f(x)_{\pol }$ is a symmetric formal power
series.

The Weyl symmetrization operator for $\GL _{l}$ is
\begin{equation}\label{e:Weyl-symmetrization}
\sigmabold (\phi (x_{1},\ldots,x_{l})) = \sum _{w\in S_{l}}
w\left(\frac{\phi (x)}{ \prod _{i<j} (1-x_{j}/x_{i})} \right).
\end{equation}
For dominant weights $\lambda$, the Weyl character formula can be
written $\chi _{\lambda } = \sigmabold (x^{\lambda })$.  More
generally, if $\phi (x) = \phi (x_{1},\ldots,x_{l})$ is a Laurent
polynomial over any field $\kk $, then $\sigmabold (\phi (x))$ is a
virtual $\GL_{l}$ character over $\kk $.

The Hall-Littlewood symmetrization operator is defined by
\begin{equation}\label{e:Hq}
\Hbold ^{l}_q(\phi (x)) = \sigmabold \left( \frac{\phi (x)}{ \prod
_{i<j}(1-q\, x_{i}/x_{j})} \right).
\end{equation}
If $\phi (x) = \phi (x_{1},\ldots,x_{l})$ is a rational function over
a field $\kk $ containing $\QQ (q)$, then $\Hbold ^{l}_{q}(\phi (x))$
is a symmetric rational function over $\kk $.  If $\phi (x)$ is a
Laurent polynomial, we can also regard $\Hbold ^{l}_{q}(\phi (x))$ as
an infinite formal sum of $\GL _{l}$ characters with coefficients in
$\kk $, by interpreting the factors $1/(1-q\, x_{i}/x_{j})$ as
geometric series.  We always understand $\Hbold ^{l}_{q}(\phi (x))$ in
this {\em raising operator series} sense when taking the polynomial
part $\Hbold ^{l}_{q}(\phi (x))_{\pol }$.

We also use the two-parameter symmetrization operator
\begin{equation}\label{e:Hqt}
\Hbold^{l}_{q,t}(\phi(x)) = \Hbold^l_q\left(\phi(x) \prod_{i<j}\frac{
(1-q\,t\,x_{i}/x_{j})} {(1-t\, x_{i}/x_{j})} \right) = \sigmabold
\left(\frac{\phi (x)\prod _{i<j}(1-q\, t\, x_{i}/x_{j})}{\prod
_{i<j}\bigl((1-q\, x_{i}/x_{j})(1-t\, x_{i}/x_{j})\bigr)} \right).
\end{equation}
Again, if $\phi (x)$ is a rational function over $\kk =\QQ (q,t)$,
then $\Hbold^{l}_{q,t}(\phi(x))$ is a symmetric rational function over
$\kk $, while if $\phi (x)$ is a Laurent polynomial, or more generally
a rational function which has a power series expansion in the
$x_{i}/x_{j}$ for $i<j$, we can also interpret
$\Hbold^{l}_{q,t}(\phi(x))$ as a raising operator series.  The series
interpretation always applies when taking
$\Hbold^{l}_{q,t}(\phi(x))_{\pol }$.

Fixing $\kk =\QQ (q,t)$ once again, let $T = T(\kk[z^{\pm 1}])$ be the
tensor algebra on the Laurent polynomial ring in one variable, that
is, the non-commutative polynomial algebra with generators
corresponding to the basis elements $z^{a}$ of $\kk [z^{\pm 1}]$ as a
vector space.  Identifying $T^{m} = T^{m}(\kk [z^{\pm 1}])$ with
$\kk[z_{1}^{\pm 1},\ldots,z_{m}^{\pm 1}]$, the product in $T$ is given
by `concatenation,'
\begin{equation}\label{e:concatenation-product}
f\cdot g = f(z_{1},\ldots,z_{k})g(z_{k+1},\ldots,z_{k+l}),\quad
\text{for $f\in T^{k}$, $g\in T^{l}$}.
\end{equation}
The Feigin-Tsymbauliak {\em shuffle algebra} \cite{FeigTsym11} is the
quotient $S=T/I$, where $I$ is the graded two-sided ideal whose degree
$l$ component $I^{l}\subseteq T^{l}$ is the kernel of the
symmetrization operator $\Hbold ^{l}_{q,t}$ in variables
$z_{1},\ldots,z_{l}$.

Let $\Ecal ^{+} \subseteq \Ecal$ be the subalgebra generated by the $\Lambda
_{\kk }(X^{m,n})$ for $m>0$.  We leave out the central subalgebra $F$,
since the relations of $\Ecal ^{+}$ (as we will see in a moment) do not
depend on the central parameters.

The image of $\Ecal ^{+}$ under the anti-automorphism $\Phi $ in \S
\ref{sss:anti-involution} is the subalgebra $\Phi (\Ecal ^{+})$
generated by the $\Lambda _{\kk }(X^{m,n})$ for $n>0$.  Note that our
convention $\theta \in (-\pi ,\pi ]$ when the subscript is omitted
yields $\Phi (f(X^{m,n})) = f(X^{n,m})$
for $\Lambda _{\kk
}(X^{m,n})\subseteq \Ecal ^{+}$, since the branch cut is in the third
quadrant.

Schiffmann and Vasserot \cite{SchiVass13} proved the following result.
See \cite[\S 3.5]{paths} for more details on the translation of their
theorem into our notation.

\begin{prop}[{\cite[Theorem 10.1]{SchiVass13}}]\label{prop:shuffle-isomorphism}
There is an algebra isomorphism $\psi \colon S\rightarrow \Ecal ^{+}$
and an anti-isomorphism $\psi^{\op } = \Phi \circ \psi \colon
S\rightarrow \Phi (\Ecal ^{+})$, given on the generators by $\psi
(z^{a}) = p_{1}[-MX^{1,a}]$ and $\psi^{\op } (z^{a}) =
p_{1}[-MX^{a,1}]$.
\end{prop}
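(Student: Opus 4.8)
The plan is to realize $\psi$ as the map induced on $S = T/I$ by a homomorphism out of the free tensor algebra, and then to pin down its kernel. Since $T = T(\kk[z^{\pm 1}])$ is free as a $\kk$-algebra on the basis $\{z^a\}_{a \in \ZZ}$ of $\kk[z^{\pm 1}]$, the assignment $z^a \mapsto p_1[-MX^{1,a}]$ extends uniquely to an algebra homomorphism $\tau \colon T \to \Ecal^+$. To obtain the stated isomorphism $\psi$ I must check two things: that $\tau$ is surjective, and that $\ker \tau = I$.

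Surjectivity I would get from the generation statement in \S\ref{sss:remarks}: the elements $\{p_1(X^{a,1}) \mid a \in \ZZ\}$ generate the upper half subalgebra $\Ecal^{*,>0}$. Applying the involutory anti-automorphism $\Phi$ of \eqref{e:involution-Phi}, which swaps the two coordinates of the $\ZZ^2$-grading and, on the branch $\theta \in (-\pi,\pi]$ relevant here, sends $f(X^{a,1})$ to $f(X^{1,a})$, one finds that $\{p_1(X^{1,a}) \mid a \in \ZZ\}$ generates $\Phi(\Ecal^{*,>0}) = \Ecal^+$. Since these are precisely the images $\tau(z^a)$ up to the nonzero scalar $-M \in \kk$, the map $\tau$ is onto.

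The crux is the computation of $\ker \tau$, which I would carry out through the action of $\Ecal$ on $\Lambda$ from Proposition~\ref{prop:E-action}. By Proposition~\ref{prop:Da-Ea}(i), the generator $\tau(z^a) = p_1[-MX^{1,a}]$ acts as the operator $D_a$, so the composite $\Theta \colon T \to \mathrm{End}(\Lambda)$, $z^a \mapsto D_a$, factors as the $\Ecal$-action followed by $\tau$. The key step is to normal-order the generating series $D(z_1) \cdots D(z_l)$ using the explicit form $D(z) = \omega\Omega[z^{-1}X]^\bullet (\omega\Omega[-zMX])^\perp$ from \eqref{e:Dz}: commuting each adjoint factor past the multiplication factors to its right produces Cauchy contractions whose net effect is exactly the two-parameter kernel $\prod_{i<j}(1-qt\,z_i/z_j)/\bigl((1-q\,z_i/z_j)(1-t\,z_i/z_j)\bigr)$ appearing in $\Hbold^l_{q,t}$ in \eqref{e:Hqt}. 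Consequently, for a Laurent polynomial $w(z_1,\dots,z_l)$ the operator $\Theta(w)$ is governed by $\Hbold^l_{q,t}(w)$, which identifies $\ker \Theta$ with $I = \ker \Hbold_{q,t}$. Since the $\Ecal^+$-action on $\Lambda$ is faithful, $\tau(w) = 0$ if and only if $\Theta(w) = 0$, whence $\ker \tau = \ker \Theta = I$; alternatively, one may replace faithfulness by a comparison of the graded dimensions of $S$ and $\Ecal^+$ together with the surjectivity already proved. Thus $\psi \colon S \to \Ecal^+$ is an isomorphism.

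For the anti-isomorphism, once $\psi$ is known to be an isomorphism, $\psi^{\op} = \Phi \circ \psi$ is automatically an anti-isomorphism onto $\Phi(\Ecal^+)$, because $\Phi$ is an involutory anti-automorphism of $\Ecal$. On generators, $\psi^{\op}(z^a) = \Phi(p_1[-MX^{1,a}]) = p_1[-MX^{a,1}]$, using that $\Phi$ fixes the scalar $M$ and carries $f(X^{1,a})$ to $f(X^{a,1})$. I expect the genuine obstacle to be the normal-ordering computation in the third paragraph---recognizing that the rearranged product $D(z_1)\cdots D(z_l)$ collapses onto the Hall--Littlewood kernel of $\Hbold^l_{q,t}$---together with the faithfulness (or dimension) input needed to transfer the resulting operator relations back into $\Ecal^+$; these are exactly the points carrying the substance of Schiffmann and Vasserot's theorem, while the surrounding structure is routine.
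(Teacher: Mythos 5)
The first thing to note is that the paper does not prove this proposition at all: it is quoted from Schiffmann--Vasserot \cite[Theorem 10.1]{SchiVass13}, with \cite[\S 3.5]{paths} cited only for the translation into the present notation. So your proposal is an attempt to reprove an external theorem, and must be judged on whether it is self-contained. It is not. The two steps you yourself flag as ``the genuine obstacle'' are not obstacles that the tools in this paper can clear; they \emph{are} the theorem. First, you invoke faithfulness of the $\Ecal^{+}$-action on $\Lambda$ to transfer the operator identity $\ker\Theta = I$ back to $\ker\tau = I$. Faithfulness is established nowhere in the paper or in the quoted background, and it is not an innocuous input: the standard route to proving that the Fock-space action is faithful on the positive half goes precisely through the shuffle realization you are trying to prove, so the argument as structured is circular. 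Second, even granting the normal-ordering computation, it yields only the inclusion $I \subseteq \ker\Theta$: commuting the $(\omega\Omega[-z_iMX])^{\perp}$ factors past the $(\omega\Omega[z_j^{-1}X])^{\bullet}$ factors produces the factors $\Omega[-Mz_i/z_j] = \frac{(1-z_i/z_j)(1-q\,t\,z_i/z_j)}{(1-q\,z_i/z_j)(1-t\,z_i/z_j)}$ (note the extra factor $(1-z_i/z_j)$, which you dropped; it is what converts plain symmetrization into the Weyl-denominator symmetrization $\sigmabold$), whence $\Theta(w)$ is a pairing of $\Hbold^l_{q,t}(w)$ against matrix coefficients of the normal-ordered part and vanishes whenever $\Hbold^l_{q,t}(w)=0$. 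The reverse inclusion $\ker\Theta\subseteq I$ --- that these matrix coefficients separate points, i.e.\ detect every nonzero symmetrized series --- is a genuine nondegeneracy statement you do not address; and your fallback, comparing graded dimensions of $S=T/I$ and $\Ecal^{+}$, is no easier, since the graded dimensions of $T/I$ are not known a priori (computing them is one of the hard points of \cite{SchiVass13}).

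There is also a smaller slip in the surjectivity paragraph: the statement in \S\ref{sss:remarks} is that $\{p_1(X^{a,1})\}$ generates $\Ecal^{*,>0}$ as an \emph{$F$-algebra}, so $\Phi(\Ecal^{*,>0})$ equals $F\cdot\Ecal^{+}$, not $\Ecal^{+}$, which by definition is generated over $\kk$ without the central subalgebra. This is repairable --- inside $\Ecal^{+}$ the relations do not involve $c_1,c_2$, so iterated commutators of the $p_1(X^{1,a})$ already produce the elements $e_k[-\widehat{M}X^{m,n}]$, $m>0$, over $\kk$ --- but it needs to be said. In short: your skeleton (define $\tau$ on the free algebra $T$, prove surjectivity, identify $\ker\tau$ with $I$ through the action on $\Lambda$) is reasonable, and the final paragraph about $\psi^{\op}=\Phi\circ\psi$ is correct and trivial once $\psi$ exists; but the two pillars carrying all the weight (faithfulness, and the nondegeneracy half of $\ker\Theta = I$) are assumed rather than proved, and at least the first cannot be obtained non-circularly by this route. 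Citing \cite{SchiVass13}, as the paper does, is not a gap; reproving it requires genuinely different and heavier machinery.
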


To be clear, on monomials in $m$ variables, representing elements of
tensor degree $m$ in $S$, the maps in
Proposition~\ref{prop:shuffle-isomorphism} are given by
\begin{align}\label{e:psi-spelled-out}
\psi (z_{1}^{a_{1}}\cdots z_{m}^{a_{m}})& = p_{1}[-M
X^{1,a_{1}}]\cdots p_{1}[-M X^{1,a_{m}}]\\
\label{e:psi-op-spelled-out}
\psi ^{\op }(z_{1}^{a_{1}}\cdots z_{m}^{a_{m}})& = p_{1}[-M
X^{a_{m},1}]\cdots p_{1}[-M X^{a_{1},1}]
\end{align}

Later we will need the following formula for the action of $\psi (\phi
(z))$ on $\Lambda (X)$.

\begin{prop}[{\cite[Proposition 3.5.2]{paths}}]\label{prop:H-formula}
Let $\phi(z) = \phi(z_{1},\ldots,z_{l})$ be a Laurent polynomial
representing an element of tensor degree $l$ in $S$, and let $\zeta =
\psi (\phi (z)) \in \Ecal ^{+}$ be its image under the map in
\eqref{e:psi-spelled-out}.  With $\Ecal $ acting on $\Lambda $ as in
Proposition \ref{prop:E-action}, we have
\begin{equation}\label{e:H-formula}
\omega (\zeta \cdot 1)(x_{1},\ldots,x_{l}) = \Hbold^l _{q,t}(\phi(x))_{\pol }.
\end{equation}
\end{prop}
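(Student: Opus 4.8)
The plan is to make the action of $\zeta$ completely explicit, compute $\zeta\cdot 1$ by normal-ordering vertex operators, and then recognize the answer as a constant-term expression reproducing the Hall--Littlewood symmetrization. First I would reduce to a single generating-function computation. By Proposition~\ref{prop:Da-Ea}(i) the generator $\psi(z^{a})=p_1[-MX^{1,a}]$ acts on $\Lambda$ as $D_a$, so by \eqref{e:psi-spelled-out} and linearity the element $\zeta=\psi(\phi(z))$ acts as $\sum_{\aA}c_{\aA}D_{a_1}\cdots D_{a_l}$ when $\phi(z)=\sum_{\aA}c_{\aA}z_1^{a_1}\cdots z_l^{a_l}$. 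Since $D(w)=\sum_a w^{-a}D_a$, this is packaged as
\begin{equation*}
\zeta\cdot 1=\langle w^{0}\rangle\;\phi(w)\,D(w_1)\cdots D(w_l)\cdot 1,
\end{equation*}
where $\langle w^{0}\rangle$ extracts the coefficient of $w_1^0\cdots w_l^0$; everything reduces to evaluating $D(w_1)\cdots D(w_l)\cdot 1$.

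For this I would normal-order the vertex operators. Factor $D(w)=\Gamma_+(w)\,\Gamma_-(w)$ with $\Gamma_+(w)=\omega\Omega[w^{-1}X]^{\bullet}$ and $\Gamma_-(w)=(\omega\Omega[-wMX])^{\perp}$, following \eqref{e:Dz}. The multiplication operators $\Gamma_+(w_i)$ commute with one another, as do the skewing operators $\Gamma_-(w_i)$, and $\Gamma_-(w)\cdot 1=1$ since the only degree-zero term of $\omega\Omega[-wMX]$ is $1$. So the one nontrivial step is to move each $\Gamma_-(w_i)$ rightward past $\Gamma_+(w_j)$ for $j>i$. Using that $\Omega[zX]^{\perp}$ is the operator $f\mapsto f[X+z]$ of plethystic addition of a variable, together with $(\omega g)^{\perp}=\omega\,g^{\perp}\omega$, each swap contributes the scalar $\Omega[-w_iM/w_j]$, which expands as
\begin{equation*}
\Omega\!\left[-\tfrac{w_i}{w_j}(1-q)(1-t)\right]=\frac{(1-w_i/w_j)\,(1-q\,t\,w_i/w_j)}{(1-q\,w_i/w_j)\,(1-t\,w_i/w_j)}.
\end{equation*}
Since $\prod_i\Gamma_+(w_i)\cdot 1=\omega\Omega[(\sum_i w_i^{-1})X]$, applying the outer $\omega$ (an algebra involution) and evaluating at $X=(x_1,\dots,x_l)$ turns the surviving factor into the Cauchy kernel $\prod_{i,k}(1-x_k/w_i)^{-1}$, and I obtain
\begin{equation*}
\omega(\zeta\cdot 1)(x)=\langle w^0\rangle\;\phi(w)\prod_{i<j}\frac{(1-w_i/w_j)(1-q\,t\,w_i/w_j)}{(1-q\,w_i/w_j)(1-t\,w_i/w_j)}\prod_{i,k=1}^{l}\frac{1}{1-x_k/w_i}.
\end{equation*}

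The main obstacle is the final identification of this constant-term expression with $\Hbold^l_{q,t}(\phi(x))_{\pol}$. The key observation is that the $w$-kernel already carries the Hall--Littlewood factor of \eqref{e:Hqt} together with one extra Weyl-denominator factor $\prod_{i<j}(1-w_i/w_j)$. I would therefore isolate the lemma
\begin{equation*}
\langle w^0\rangle\;\Psi(w)\prod_{i<j}(1-w_i/w_j)\prod_{i,k=1}^l\frac{1}{1-x_k/w_i}=\sigmabold(\Psi(x))_{\pol},
\end{equation*}
valid for any Laurent polynomial $\Psi$, and apply it to $\Psi(w)=\phi(w)\prod_{i<j}\frac{1-q\,t\,w_i/w_j}{(1-q\,w_i/w_j)(1-t\,w_i/w_j)}$, which yields \eqref{e:H-formula} directly from the definition \eqref{e:Hqt}. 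I expect to prove the lemma by expanding the Cauchy kernel as $\sum_\lambda s_\lambda(x)\,s_\lambda(w^{-1})$, so that the constant term against $\Psi(w)\prod_{i<j}(1-w_i/w_j)$ becomes a sum of $s_\lambda(x)$ weighted by determinantal (bialternant) pairings; the Weyl-denominator factor is precisely what converts these pairings into the alternating sum defining $\sigmabold$, while the geometric-series expansion of $(1-x_k/w_i)^{-1}$ matches the raising-operator-series convention under which $\Hbold^l_{q,t}(\cdot)_{\pol}$ is defined, so that the two notions of ``polynomial part'' coincide. Verifying that the Weyl-denominator bookkeeping reproduces $\sigmabold$ exactly, rather than up to a lower-order correction, and checking the compatibility of the two series conventions, is where the real work lies.
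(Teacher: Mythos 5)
Your proposal is correct and takes essentially the same route as the proof this paper relies on: Proposition~\ref{prop:H-formula} is quoted without proof from \cite[Proposition 3.5.2]{paths}, where the argument is likewise a normal-ordering of the vertex operators coming from \eqref{e:Dz}, producing the kernel $\prod_{i<j}\Omega[-M\,w_i/w_j]\,\prod_{i,k}(1-x_k/w_i)^{-1}$, followed by a constant-term identification with $\Hbold^{l}_{q,t}(\phi(x))_{\pol}$. The details you flag do go through: your commutation scalar $\Omega[-M\,w_i/w_j]$ is right, and your key lemma holds because $\langle w^0\rangle\, s_\lambda(w)\,s_\mu(w^{-1})\prod_{i<j}(1-w_i/w_j)=\delta_{\lambda\mu}$, with the extension from Laurent polynomials $\Psi$ to raising-operator series being legitimate since each coefficient of the constant term receives only finitely many contributions.
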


\section{Schiffmann algebra reformulation of the symmetric function side}
\label{ss:symm-side-reform}

\subsection{Distinguished elements \texorpdfstring{$D_{\bb }$}{D\_b}
and \texorpdfstring{$E_{\aA }$}{E\_a}} Negut~\cite{Negut14} defined a
family of distinguished elements $D_{\bb }\in \Ecal ^{+}$, indexed by
$\bb \in \ZZ ^{l}$, which in the case $l=1$ reduce to the elements in
Proposition~\ref{prop:Da-Ea}(i).  Here a remarkable symmetry between
these elements and their images $E_{\aA }$ under the anti-involution
$\Phi $ will play a crucial role.
After defining the Negut elements, we derive this symmetry in
Proposition~\ref{prop:D=E} with the help of a commutator formula of
Negut \cite{Negut18}.

\begin{defn}[see also {\cite[\S 3.6]{paths}}]\label{def:Negut-Db}
Given $\bb = (b_1,\ldots,b_l) \in \ZZ^l$, set
\begin{equation}\label{e:Negut-phi}
\phi ({z}) = \frac{z_{1}^{b_1}\cdots z_{l}^{b_{l}}}{
\prod _{i=1}^{l-1}(1-q\, t\, z_{i}/z_{i+1})}.
\end{equation}
and let $\nu(z)=\nu (z_{1},\ldots,z_{l})$ be a Laurent polynomial
satisfying $\Hbold_{q,t}^l(\nu(z)) = \Hbold_{q,t}^l(\phi(z))$.  Such a
$\nu (z)$ exists by \cite[Proposition 6.1]{Negut14}, and represents a
well-defined element of the shuffle algebra $S$.  The {\em Negut
element} $D_{\bb }$ and the {\em transposed Negut element} $E_{\aA }$,
where $\aA = (b_{l},\ldots,b_{1})$ is the reversed sequence of
indices, are defined by
\begin{align}
\label{e:Edef}
D_{\bb} = D_{b_{1},\ldots,b_{l}} & = \psi (\nu (z)) \in \Ecal^+\\
E_{\aA } = E_{b_{l},\ldots,b_{1}} & =\Phi ( D_\bb ) = \psi ^{\op }(\nu
(z))\in \Phi(\Ecal ^{+}).
\end{align}
\end{defn}

We should point out that, strictly speaking, the Negut elements in the
case $l=1$ are defined to be elements $D_{a} = p_{1}[-M X^{1,a}]$ and
$E_{a} = p_{1}[-M X^{a,1}]$ of $\Ecal $, while in
Definition~\ref{def:Ea-Da}, we used the notation $D_{a}$ and $E_{a}$
for operators on $\Lambda $.  However, by
Proposition~\ref{prop:Da-Ea}, these Negut elements act as the
operators with the same name, so no confusion should ensue.

Later we will use the following product formulas, which are immediate
from Definition~\ref{def:Negut-Db}.
\begin{eqnarray}
\label{e:product of D} D_{b_{1},\ldots,b_{l}}\, D_{b_{l
+1},\ldots,b_{n}} &=& D_{b_{1},\ldots,b_{n}}
 - q\, t\,  D_{b_{1},\ldots,b_{l} + 1, b_{l +1} - 1,\ldots,b_{n}}\,,\\
\label{e:product of E} E_{a_{n},\ldots,a_{l +1}}\,
E_{a_{l},\ldots,a_{1}} &=& E_{a_{n},\ldots,a_{1}} - q\, t\,
E_{a_{n},\ldots, a_{l +1} - 1,a_{l} + 1,\ldots,a_{1}}\,.
\end{eqnarray}

As noted in \S \ref{sss:remarks}, the internal action relations
determine the action of $\Lambda _{\kk }(X^{0,1})$ on $\Phi( \Ecal
^{+})$.  Using the anti-isomorphism between $\Phi (\Ecal ^{+})$ and
the shuffle algebra we can make this more explicit.

\begin{lemma}\label{lem:Ad-formula}
Let $\phi(z)=\phi (z_{1},\ldots,z_{n})$ be a Laurent polynomial representing
an element of tensor degree $n$ in $S$.  Then
\begin{equation}\label{e:Ad-formula}
(\Ad f(X^{1,0}))\, \psi ^{\op }(\phi (z)) = \psi ^{\op }\bigl((\omega
f)(z_{1},\ldots,z_{n})\cdot \phi (z)\bigr).
\end{equation}
As a particular consequence, we have
\begin{equation}\label{e:adE}
(\Ad f(X^{1,0})) E_{a_{n},\ldots,a_{1}}= \psi^{\op}\left(
\frac{(\omega f)(z_{1},\ldots,z_{n})\cdot z_{1}^{a_{1}}\cdots
z_{n}^{a_{n}}}{\prod_{i=1}^{n-1} (1-q\, t\, z_{i} / z_{i+1})}\right).
\end{equation}
\end{lemma}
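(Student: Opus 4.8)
The plan is to reduce \eqref{e:Ad-formula} to the single-generator internal action relation \eqref{e:internal-action} using the module-algebra structure of the adjoint action. First I would record the base case $n=1$: since $\psi^{\op}(z^a)=p_1[-MX^{a,1}]$ and $-M$ is a scalar, the relation \eqref{e:internal-action} for $X^{a,1}$ reads $(\Ad g(X^{1,0}))\,p_1[-MX^{a,1}]=-M\,(\omega g)[z]\big|_{z^k\mapsto p_1(X^{a+k,1})}$ for any symmetric $g$. Writing $(\omega g)[z]=(\omega g)(z)=\sum_k c_k z^k$ (the one-variable specialization, which agrees with plethystic evaluation at a single variable) and reabsorbing the scalar via $\psi^{\op}(z^{a+k})=p_1[-MX^{a+k,1}]$, this is exactly $(\Ad g(X^{1,0}))\,\psi^{\op}(z^a)=\psi^{\op}\big((\omega g)(z)\,z^a\big)$. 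Thus \eqref{e:Ad-formula} holds for $n=1$, with the factors $-M$ handled uniformly by keeping everything inside $\psi^{\op}$.

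Next, by $\kk$-linearity of both sides in $\phi$, it suffices to treat a monomial $\phi=z_1^{a_1}\cdots z_n^{a_n}$, whose image $\psi^{\op}(\phi)=p_1[-MX^{a_n,1}]\cdots p_1[-MX^{a_1,1}]$ is a product of generators by \eqref{e:psi-op-spelled-out}. I would apply the module-algebra formula $(\Ad f)(\zeta_1\zeta_2)=\sum((\Ad f_{(1)})\zeta_1)((\Ad f_{(2)})\zeta_2)$ from \S\ref{sss:remarks}, iterated to $n$ factors via coassociativity, to get $(\Ad f(X^{1,0}))\,\psi^{\op}(\phi)=\sum\prod_{j=1}^n(\Ad f_{(j)}(X^{1,0}))\,p_1[-MX^{a_{n+1-j},1}]$, where $\sum f_{(1)}\otimes\cdots\otimes f_{(n)}$ is the iterated coproduct of $f$ built from \eqref{e:Lambda-coprod}. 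Applying the base case to each factor turns it into $\psi^{\op}$ of a one-variable Laurent polynomial, and since $\psi^{\op}$ is an anti-homomorphism (Proposition~\ref{prop:shuffle-isomorphism}) the product becomes $\psi^{\op}$ of a single concatenation \eqref{e:concatenation-product}, yielding $\psi^{\op}\big(\sum\prod_{i=1}^n(\omega f_{(n+1-i)})(z_i)\cdot z_1^{a_1}\cdots z_n^{a_n}\big)$.

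The main obstacle is the bookkeeping in this last step: the anti-homomorphism reverses the order of the factors, so the $i$-th variable is paired with the Sweedler component $f_{(n+1-i)}$ rather than $f_{(i)}$. This reversal is exactly undone by cocommutativity of $\Lambda$, which makes the iterated coproduct invariant under permuting its tensor factors, so that $\sum\prod_i(\omega f_{(n+1-i)})(z_i)=\sum\prod_i(\omega f_{(i)})(z_i)$; and since $\omega$ is a coalgebra map ($\Delta(\omega f)=(\omega\otimes\omega)\Delta f$) this sum is precisely $(\omega f)[z_1+\cdots+z_n]=(\omega f)(z_1,\ldots,z_n)$, giving \eqref{e:Ad-formula}. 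Finally, for \eqref{e:adE} I would take $\phi$ to be the Negut fraction of Definition~\ref{def:Negut-Db}, so that $\psi^{\op}(\phi)=E_{a_n,\ldots,a_1}$; the only point to verify is that multiplication by the symmetric polynomial $(\omega f)(z_1,\ldots,z_n)$ is well defined on $S$, which follows from $\Hbold^n_{q,t}(g\,\psi)=g\,\Hbold^n_{q,t}(\psi)$ for symmetric $g$ (as $\sigmabold$ commutes with multiplication by symmetric functions). Hence $(\omega f)(z)\cdot\nu$ and $(\omega f)(z)\cdot\phi$ represent the same class, and \eqref{e:adE} is the stated special case.
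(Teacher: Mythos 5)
Your proof is correct and follows essentially the same route as the paper: the paper's one-line proof invokes exactly the mechanism you spell out, namely the internal action relations \eqref{e:internal-action} on the generators $p_1[-MX^{a,1}]$ combined with the rule from \S\ref{sss:remarks} for $\Ad f$ acting on products, with the order reversal under the anti-isomorphism $\psi^{\op}$ absorbed by cocommutativity of the coproduct on $\Lambda$. Your closing observation that multiplication by a symmetric Laurent polynomial descends to the shuffle algebra quotient (so that \eqref{e:adE} follows from \eqref{e:Ad-formula} applied to a polynomial representative of the Negut element) is a worthwhile detail that the paper leaves implicit.
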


\begin{proof}
This follows immediately from the rule in \S \ref{sss:remarks} for
$\Ad f$ acting on a product.
\end{proof}

\subsection{Commutator identity}
\label{ss:commutator-identity}

We use a formula for the commutator of elements $D_{a}$ and
$D_{\bb }$, and a similar identity for $E_{a}$ and $E_{\bb }$.  This
commutation relation was proved geometrically by Negut
in~\cite{Negut18}, but to keep things self-contained, we provide an
elementary algebraic proof. It is convenient to express the formula
using the notation
\begin{equation}
\label{e:comnot}
\sump{i=a}{b} f_i =
\begin{cases}
\sum_{i = a}^{b} f_i & \text{for $a \leq b + 1$} \\[2ex]
- \sum_{i = b+1}^{a-1} f_i & \text{for $a \geq b + 1$}.
\end{cases}
\end{equation}
As a mnemonic device, note that both cases can be interpreted as
$\sum_{i = a}^{\infty} f_i -  \sum_{i = b+1}^{\infty} f_i$.

\begin{prop}[{\cite[Proposition 4.7]{Negut18}}]\label{prop:commutator-identity}
For any $a\in\ZZ$ and $\mathbf{b} = (b_1,\ldots,b_l) \in \ZZ^l$, we have
\begin{align}\label{e:D-commutator}
[D_a,D_{b_1,b_2,\ldots,b_l}]
& = -M\sum _{i = 1}^l\sump{k = a+1}{b_i} D_{b_1,\ldots,b_{i-1}, k,
 a+b_i-k,b_{i+1},\ldots,b_l}\\
\label{e:E-commutator}
[E_{b_l, \ldots, b_2 , b_1},E_a] & =
- M\sum _{i = 1}^l\sump{k = a+1}{b_i} E_{b_l,\ldots,b_{i+1},
a+b_i-k,k,b_{i-1},\ldots,b_1}.
\end{align}
\end{prop}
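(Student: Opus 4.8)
The plan is to reduce the whole proposition to the single identity \eqref{e:D-commutator} and then to verify that one inside the shuffle algebra. First I would dispatch \eqref{e:E-commutator} by applying the anti-automorphism $\Phi$ of \eqref{e:involution-Phi} to \eqref{e:D-commutator}. By Definition~\ref{def:Negut-Db} we have $\Phi(D_a)=E_a$ and $\Phi(D_{c_1,\ldots,c_{l+1}})=E_{c_{l+1},\ldots,c_1}$, and since $\Phi$ is $\kk$-linear it fixes the scalar $M$. As $\Phi$ reverses products, it sends $[D_a,D_{b_1,\ldots,b_l}]=D_aD_\bb-D_\bb D_a$ to $E_{b_l,\ldots,b_1}E_a-E_aE_{b_l,\ldots,b_1}=[E_{b_l,\ldots,b_1},E_a]$, and carries each summand $D_{b_1,\ldots,b_{i-1},k,a+b_i-k,b_{i+1},\ldots,b_l}$ to $E_{b_l,\ldots,b_{i+1},a+b_i-k,k,b_{i-1},\ldots,b_1}$. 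This is exactly \eqref{e:E-commutator}, so it suffices to prove \eqref{e:D-commutator}.

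Next I would transport \eqref{e:D-commutator} across the isomorphism $\psi\colon S\to\Ecal^+$ of Proposition~\ref{prop:shuffle-isomorphism}. Writing $D_a=\psi(z^a)$ and representing $D_\bb$ by the Negut function $\phi$ of \eqref{e:Negut-phi} (which defines the same class in $S$ as the Laurent polynomial $\nu$), the concatenation product \eqref{e:concatenation-product} gives $[D_a,D_\bb]=\psi\bigl(z_1^a\,\phi(z_2,\ldots,z_{l+1})-\phi(z_1,\ldots,z_l)\,z_{l+1}^a\bigr)$, while each summand on the right is $\psi$ of $\phi_{i,k}(z)=\dfrac{z_1^{b_1}\cdots z_{i-1}^{b_{i-1}}z_i^{k}z_{i+1}^{a+b_i-k}z_{i+2}^{b_{i+1}}\cdots z_{l+1}^{b_l}}{\prod_{j=1}^{l}(1-q\,t\,z_j/z_{j+1})}$. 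Because $\psi$ is injective, \eqref{e:D-commutator} becomes the assertion that these two elements of $S$ agree, i.e.\ that their difference lies in $I=\ker\Hbold_{q,t}$.

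I would treat the base case $l=1$ first, where \eqref{e:D-commutator} reads $[D_a,D_b]=-M\sump{k=a+1}{b}D_{k,a+b-k}$. Everything then lives in two variables, so this is the single equality $\Hbold^2_{q,t}(z_1^az_2^b-z_1^bz_2^a)=-M\sump{k=a+1}{b}\Hbold^2_{q,t}\!\bigl(z_1^kz_2^{a+b-k}/(1-q\,t\,z_1/z_2)\bigr)$, a finite rational-function computation with the explicit operator \eqref{e:Hqt}; by antisymmetry in $a\leftrightarrow b$ one may assume $a<b$, and the $\sump{}{}$ convention of \eqref{e:comnot} reproduces the correct sign and range in the remaining cases. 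For general $l$ I would expand the Negut denominator as the geometric series $\prod_i(1-q\,t\,z_i/z_{i+1})^{-1}=\sum_{\mathbf j}(q\,t)^{|\mathbf j|}\prod_i(z_i/z_{i+1})^{j_i}$, exhibiting $D_\bb$ as a formal series of products $D_{c_1}\cdots D_{c_l}$ of single generators with $c_m=b_m+j_m-j_{m-1}$ (and $j_0=j_l=0$). Since $[D_a,-]$ is a derivation, I can differentiate termwise and apply the base case to each $[D_a,D_{c_m}]$; reassembling via concatenation in $S$ and resumming over $\mathbf j$ should reconstruct the functions $\phi_{i,k}$, with the one surviving factor $1/(1-q\,t\,z_i/z_{i+1})$ coming from the base-case block $D_{k,a+c_i-k}$ and the remaining $l-1$ factors from the resummation.

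The hard part will be exactly this resummation: one must check that collecting the termwise contributions reproduces precisely the ranges $\sump{k=a+1}{b_i}$ and the full product $\prod_{j=1}^{l}(1-q\,t\,z_j/z_{j+1})$ appearing in $\phi_{i,k}$, with all spurious monomials cancelling. The finite shadow of this bookkeeping is the product formula \eqref{e:product of D}: peeling off $D_{b_1}$ via $D_\bb=D_{b_1}D_{b_2,\ldots,b_l}+q\,t\,D_{b_1+1,b_2-1,b_3,\ldots,b_l}$ and using the derivation property produces, after re-expanding the resulting products, the position-$1$ contribution from the base case and the positions $2,\ldots,l$ contributions from the lower-length case, plus $q\,t$-correction terms that must be matched against $q\,t\,[D_a,D_{b_1+1,b_2-1,\ldots}]$. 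Controlling these corrections—whether as a telescoping of the $\sump{}{}$ ranges or as the kernel computation modulo $I$ in the shuffle picture—is where the real work lies; the remainder is organized entirely by the $\sump{}{}$ notation.
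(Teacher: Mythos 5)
Your reduction of \eqref{e:E-commutator} to \eqref{e:D-commutator} via $\Phi$, and your reformulation of \eqref{e:D-commutator} as the statement that a difference of rational functions lies in $\ker \Hbold^{l+1}_{q,t}$, both match the paper's proof exactly. But from that point on the proposal does not constitute a proof: the entire content of the proposition is deferred to the ``resummation'' you yourself flag as the hard part, and the route you sketch for it has a structural problem. The geometric-series expansion $\prod_i(1-q\,t\,z_i/z_{i+1})^{-1}=\sum_{\mathbf j}(q\,t)^{|\mathbf j|}\prod_i(z_i/z_{i+1})^{j_i}$ does not exhibit $D_\bb$ as an element of $S$ or of $\Ecal^+$ in any defined sense: both algebras consist of (classes of) Laurent polynomials, and an infinite formal sum of products $D_{c_1}\cdots D_{c_l}$ is not an element of either; over $\kk=\QQ(q,t)$ there is no $(q\,t)$-adic or other topology supplied in which such a series converges. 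So ``$[D_a,-]$ is a derivation, differentiate termwise, apply the base case, and resum'' cannot be run without first constructing a completion of $S$ in which the series, the product, the commutator, and membership in $\ker\Hbold_{q,t}$ all make sense --- and even granting that, since $c_m=b_m+j_m-j_{m-1}$ ranges over all of $\ZZ$ as $\mathbf j$ varies, the inner ranges $\sump{k=a+1}{c_m}$ change from term to term, and the claim that the contributions recombine into the functions $\phi_{i,k}$ is precisely the identity to be proven, not bookkeeping.

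What is missing is a substitute for the paper's key lemma: $\Hbold^m_{q,t}\bigl(\Omega[M\,z_i/z_{i+1}]f(z)\bigr)=0$ whenever $f$ is antisymmetric in $z_i,z_{i+1}$ (Lemma~\ref{l:antisym}). The paper applies this to the antisymmetric functions $z_i^a\phi(\hat z_i)-\phi(\hat z_{i+1})z_{i+1}^a$, rewrites $\Omega[M\,z]=1-M/\bigl((1-z^{-1})(1-q\,t\,z)\bigr)$, telescopes the sum over $i$, and then a purely rational-function manipulation --- no series expansion, no induction on $l$ --- identifies the leftover terms with the right-hand side of \eqref{e:aimforD}; working uniformly in $l$ with rational functions is exactly what avoids the well-definedness issues above. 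Your $l=1$ base case is a legitimate finite check, and the product formula \eqref{e:product of D} does give a correct finite recursion, but you have not shown how the $q\,t$-correction terms it generates cancel, and that cancellation is where the proof actually lives. As it stands, the proposal establishes the two framing reductions that the paper also makes and leaves the core of the theorem unproven.
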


We will need the following lemma for the proof.  The notation $\Omega
(X)$ is defined in~\eqref{e:Omega}.  Since plethystic substitution into
$\Omega (X)$ is characterized by
\begin{equation}\label{e:Omega-pleth}
\Omega [a_{1}+a_{2}+\cdots -b_{1}-b_{2}-\cdots ] = \frac{\prod
_{i}(1-b_{i})}{\prod _{i}(1-a_{i})},
\end{equation}
we have
\begin{equation}\label{eq:Omega-Mz}
\Omega[Mz] = \frac{(1-q\, z)(1-t\, z)}{(1-z) (1-q\, t\, z)} \quad
\text{and}\quad \Omega[-Mz] = \frac{(1-z) (1-q\, t\, z)}{(1-q\,z)(1-t\, z)}\, .
\end{equation}

\begin{lemma}\label{l:antisym}
For any $f(z)=f(z_1,\ldots,z_m)$ antisymmetric in $z_i$ and $z_{i+1}$,
we have
\begin{equation}
\Hbold^m_{q,t}\bigl(\Omega[M\, z_i/z_{i+1}] f(z)\bigr) = 0 \, .
\end{equation}
\end{lemma}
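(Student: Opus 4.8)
The plan is to unfold the definition of $\Hbold^m_{q,t}$ all the way down to the Weyl symmetrization $\sigmabold$ and then show that the single summand being symmetrized is \emph{antisymmetric} under the simple transposition $s_i$ exchanging $z_i$ and $z_{i+1}$. Combining \eqref{e:Hqt} with \eqref{e:Weyl-symmetrization} and writing $\phi = \Omega[M\,z_i/z_{i+1}]\,f(z)$, we have
\[
\Hbold^m_{q,t}(\phi) \;=\; \sum_{w\in S_m} w(\Psi),\qquad
\Psi \;=\; \frac{\phi\,\prod_{a<b}(1-q\,t\,z_a/z_b)}{\prod_{a<b}(1-q\,z_a/z_b)(1-t\,z_a/z_b)\;\prod_{a<b}(1-z_b/z_a)}\,.
\]
Since $w\langle s_i\rangle = \{w,\,w s_i\}$ partitions $S_m$ into two-element cosets and $w(\Psi) + w(s_i\Psi) = w(\Psi + s_i\Psi)$, it suffices to prove the single identity $s_i\Psi = -\Psi$; the whole sum then cancels in pairs.

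Next I would isolate the factors indexed by the pair $(i,i+1)$ in each of the three products. Using \eqref{eq:Omega-Mz} with $u = z_i/z_{i+1}$, namely $\Omega[Mu] = (1-qu)(1-tu)/\bigl((1-u)(1-q\,t\,u)\bigr)$, the $(i,i+1)$ contribution collapses:
\[
\Omega[M\,z_i/z_{i+1}]\cdot\frac{1-q\,t\,z_i/z_{i+1}}{(1-q\,z_i/z_{i+1})(1-t\,z_i/z_{i+1})} \;=\; \frac{1}{1-z_i/z_{i+1}}\,.
\]
Peeling off also the $(i,i+1)$ factor $1-z_{i+1}/z_i$ from the Weyl denominator $\prod_{a<b}(1-z_b/z_a)$, this rewrites the summand as
\[
\Psi \;=\; f(z)\cdot \frac{1}{(1-z_i/z_{i+1})(1-z_{i+1}/z_i)}\cdot R(z)\,,
\]
where $R(z)$ collects all remaining factors, i.e.\ the three products restricted to pairs $(a,b)$ with $a<b$ and $(a,b)\neq(i,i+1)$.

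Finally I check the $s_i$-behaviour factor by factor. By hypothesis $s_i f = -f$. The middle factor is $s_i$-invariant, since $s_i$ merely interchanges $1-z_i/z_{i+1}$ and $1-z_{i+1}/z_i$. And $R(z)$ is $s_i$-invariant: the only factors touched by $s_i$ are those indexed by pairs $(a,i),(a,i+1)$ with $a<i$ and by pairs $(i,b),(i+1,b)$ with $b>i+1$, and $s_i$ swaps each such pair with its partner; the would-be fixed pair $(i,i+1)$ has already been removed, so nothing is left unmatched. Hence $s_i\Psi = (-1)\cdot \Psi = -\Psi$, and the lemma follows.

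The only point requiring care is the bookkeeping in the cancellation: one must confirm that $(i,i+1)$ is exactly the pair whose $N/P$ factors combine with $\Omega[M z_i/z_{i+1}]$ to the single pole $1/(1-z_i/z_{i+1})$, and that after its removal every surviving factor pairs up cleanly under $s_i$. I do not anticipate a genuine obstacle; the identity $s_i\Psi=-\Psi$ is an equality of rational functions, hence valid in every interpretation of $\Hbold^m_{q,t}$, including the raising-operator-series sense used when taking polynomial parts.
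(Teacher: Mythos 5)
Your proof is correct and takes essentially the same route as the paper's: both absorb $\Omega[M\,z_i/z_{i+1}]$ into the $(i,i+1)$ factors of the $\Hbold^m_{q,t}$ kernel (via $\Omega[Mu]\cdot\tfrac{1-q\,t\,u}{(1-q\,u)(1-t\,u)}=\tfrac{1}{1-u}$) and then conclude that the summand is $s_i$-antisymmetric, being the antisymmetric $f$ times $s_i$-invariant remaining factors, so the sum over $S_m$ cancels in pairs over cosets $\{w,\,w s_i\}$. The only cosmetic difference is bookkeeping: the paper writes the $s_i$-invariant remainder as $\prod_{j\neq k}(1-z_j/z_k)^{-1}\prod_{j<k,\,(j,k)\neq(i,i+1)}\Omega[-M\,z_j/z_k]$, whereas you keep it as your $R(z)$ together with the symmetric pole $\bigl((1-z_i/z_{i+1})(1-z_{i+1}/z_i)\bigr)^{-1}$.
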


\begin{proof}
The definition of $\Hbold^m_{q,t}$ and~\eqref{eq:Omega-Mz} imply that
\begin{align}
\Hbold^m_{q,t}\bigl(\Omega[M\, z_i/z_{i+1}]f(z)\bigr) = \sum_{w\in
S_m} w\left( f(z)\prod_{j \neq k} \frac{1}{1-z_j/z_k}\prod_
{\substack{j<k \\ (j,k)\neq (i,i+1)}} \Omega[-M\, z_j/z_{k}] \right),
\end{align}
which vanishes since $f(z)$ is antisymmetric in $z_i$ and $z_{i+1}$.
\end{proof}

\begin{proof}[Proof of Proposition \ref{prop:commutator-identity}]

Identity \eqref{e:E-commutator} for $[E_{b_{l},\ldots,b_{1}},\,
E_{a}]$ follows from \eqref{e:D-commutator} by applying the
anti-homomorphism $\Phi $, so we only prove \eqref{e:D-commutator},
which can be written
\begin{equation}\label{e:D-commutator-0}
D_a\, D_\bb - D_\bb\, D_a + M \sum_{i=1}^{l}\,  \sump{k=a+1}{b_{i}}
D_{b_1,\ldots,b_{i-1}, k, a+b_i-k,b_{i+1},\ldots,b_l} = 0.
\end{equation}
Using Definition~\ref{def:Negut-Db} and the isomorphism $\psi \colon
S\rightarrow \Ecal ^{+}$, we can prove \eqref{e:D-commutator-0} by
showing that a rational function representing the left hand side is in
the kernel of the symmetrization operator $\Hbold ^{l+1}_{q,t}$.
For this we can work directly with the rational functions $\phi (z)$
in~\eqref{e:Negut-phi}; there is no need to replace them explicitly
with Laurent polynomials having the same symmetrization.

Let $\phi(z)$ be the function in~\eqref{e:Negut-phi} for $D_{\bb }$,
and set
\begin{equation}\label{eq:phihatz}
\phi (\hat z_i) = \phi(z_1,\ldots,z_{i-1},z_{i+1},\ldots,z_{l+1}) =
\dfrac{z_{1}^{ b_1}\cdots z_{i-1}^{b_{i-1}}z_{i+1}^{b_{i}}\cdots
z_{l+1}^{b_{l} }}{ (1-q\, t\, z_{i-1}/z_{i+1})\prod\limits _
{\substack{1\leq j\leq l\\ j\neq i,i-1}} (1-q\, t\, z_{j}/z_{j+1})}\,.
\end{equation}
To prove \eqref{e:D-commutator-0}, we want to show
\begin{equation}\label{e:aimforD}
\Hbold_{q,t}^{l+1}\bigg( z_1^a\phi(\hat{z_1 })
-\phi(\hat{z_{l+1}})z_{l+1}^a + M \dfrac{\sum\limits _{i=1}^{l}\,
\sump{k=a+1}{b_{i}} z_1^{b_1}\cdots z_{i-1}^{b_{i-1}} z_i^k
z_{i+1}^{a+b_i - k}z_{i+2}^{b_{i+1}}\cdots
z_{l+1}^{b_l}}{\prod_{j=1}^{l} (1-q\,t\,z_j/z_{j+1})} \bigg)=0\,.
\end{equation}

Since $z_i^a\phi(\hat z_i) -\phi(\hat z_{i+1})z_{i+1}^a$ is
antisymmetric in $z_i$ and $z_{i+1}$, Lemma~\ref{l:antisym} implies
\begin{equation}\label{e:H_q,t(omega f) =0}
\sum _{i=1}^{l} \Hbold_{q,t}^{l+1} \biggl( \Omega [M\, z_{i}/z_{i+1}]
(z_i^a\phi(\hat z_i) -\phi(\hat z_{i+1})z_{i+1}^a) \biggr) = 0
\end{equation}
The first formula in \eqref{eq:Omega-Mz} is algebraically the same as
\[
\Omega[M\, z] = 1-\dfrac{M}{(1-z^{-1})(1-q\, t\, z)}.\]
After substituting this into \eqref{e:H_q,t(omega f) =0}, the
linearity of $\Hbold_{q,t}^{l+1}$ gives
\begin{equation}\label{e:H_q,t(omega f) =0 rewritten}
\Hbold_{q,t}^{l+1}\bigg(\sum_{i = 1}^{l} \Big(z_i^a\phi(\hat{z_i})
-\phi(\hat{z_{i+1}})z_{i+1}^a -M \frac{z_i^a\phi(\hat{z_i})
-\phi(\hat{z_{i+1}})z_{i+1}^a}{(1-z_{i+1}/z_i)(1-q\, t\,
z_i/z_{i+1})}\Big) \bigg)=0.
\end{equation}
The terms $z_i^a\phi(\hat{z_i}) -\phi(\hat{z_{i+1}})z_{i+1}^a$
telescope, reducing this to
\begin{equation}\label{e:H_q,t(omega f) =0 simplify}
\Hbold_{q,t}^{l+1}\bigg( z_1^a\phi(\hat{z_1 })
-\phi(\hat{z_{l+1}})z_{l+1}^a -M \sum_{i = 1}^{l}
\frac{z_i^a\phi(\hat{z_i})
-\phi(\hat{z_{i+1}})z_{i+1}^a}{(1-z_{i+1}/z_i)(1-q\, t\,
z_i/z_{i+1})}\bigg)=0.
\end{equation} 

If we use the convention $z_0=0$ and $z_{l+2}=\infty$, collecting
terms in $z_{i}^{a}\phi (\hat{z_{i}})$ and some further algebra manipulations
give
\begin{align*}
\sum _{i=1}^{l} \frac{z_i^a \phi(\hat{z_i}) - \phi(\hat{z_{i+1}})
z_{i+1}^a}{(1 - \frac{z_{i+1}}{z_{i}})(1 - q\, t
\frac{z_{i}}{z_{i+1}})} & = \sum _{i=1}^{l+1} \left[\frac{1}{(1 -
\frac{z_{i+1}}{z_{i}})(1 - q\, t\frac{z_{i}}{z_{i+1}})} - \frac{1}{(1
- \frac{z_{i}}{z_{i-1}})(1 -
q\, t\frac{z_{i-1}}{z_{i}})}\right] z_i^a \phi(\hat{z_i}) \\
& = \sum _{i=1}^{l+1} \frac{z_i^a \phi(\hat{z_i}) (1 - q\, t
\frac{z_{i-1}}{z_{i+1}})}{(1 - q\, t \frac{z_{i-1}}{z_{i}})(1 - q\, t
\frac{z_{i}}{z_{i+1}})} \Big( \frac{1}{1 -
\frac{z_{i+1}}{z_{i}}} - \frac{1}{1 - \frac{z_{i}}{z_{i-1}}}\Big)\\
& = \sum _{i=1}^{l+1} \frac{\dfrac{z_i^a \phi(\hat{z_i}) (1 - q\, t
\frac{z_{i-1}}{z_{i+1}})}{(1 - q\, t \frac{z_{i-1}}{z_{i}})(1 - q\, t
\frac{z_{i}}{z_{i+1}})} - \dfrac{z_{i+1}^a \phi(\hat{z_{i+1}}) (1 - q\,
t \frac{z_{i}}{z_{i+2}})}{(1 - q\, t \frac{z_{i}}{z_{i+1}})(1 - q\, t
\frac{z_{i+1}}{z_{i+2}})}}{1 - \frac{z_{i+1}}{z_i}}\,.
\end{align*}
Expanding the definition~\eqref{eq:phihatz} of $\phi (\hat{z_{i}})$
for each $i$ yields
\begin{equation*}
\frac{z_i^a \phi(\hat{z_i}) (1 -q\, t\, z_{i-1}/z_{i+1})}{(1 -q\, t\,
z_{i-1} / z_{i}) (1- q\, t\, z_{i}/z_{i+1})} = \frac{z_1^{b_1}\cdots
z_{i-1}^{b_{i-1}} z_i^a z_{i+1}^{b_i} \cdots
z_{l+1}^{b_l}}{\prod_{j=1}^{l} (1 - q\, t\, z_j / z_{j+1})} \,,
\end{equation*}
so that
\begin{align*}
\sum _{i=1}^{l} \frac{z_i^a \phi(\hat{z_i}) - \phi(\hat{z_{i+1}})
z_{i+1}^a}{(1 - z_{i+1} / z_{i})(1 - q\, t\, z_{i} / z_{i+1})}
& = \frac{\sum _{i=1}^{l} z_1^{b_1} \cdots z_{i-1}^{b_{i-1}} \cdot
\dfrac{z_i^a z_{i+1}^{b_i} -z_i^{b_i} z_{i+1}^{a}}{1 - z_{i+1} / z_i}
\cdot z_{i+2}^{b_{i+1}}\cdots z_{l+1}^{b_l}}{\prod _{j=1}^{l} (1 - q\,
t\, z_j / z_{j+1})}\\
& = \frac{- \sum _{i=1}^{l} z_1^{b_1} \cdots z_{i-1}^{b_{i-1}} \cdot
\bigl(\,  \sump{k = a+1}{b_{i}} z_i^kz_{i+1}^{a+b_i - k}  \bigr)
\cdot z_{i+2}^{b_{i+1}}\cdots z_{l+1}^{b_l}}{\prod _{j=1}^{l} (1 - q\,
t\, z_j / z_{j+1})}
\end{align*}
Identity ~\eqref{e:aimforD} follows by substituting this back into
\eqref{e:H_q,t(omega f) =0 simplify}.
\end{proof}

\subsection{Symmetry identity for \texorpdfstring{$D_{\bb }$}{D\_b}
and \texorpdfstring{$E_{\aA }$}{E\_a}}
\label{ss:Db=Ea}

Next we will prove an identity between certain instances of the Negut
elements $D_{\bb }\in \Ecal ^{+}$ and transposed Negut elements
$E_{\aA }\in \Phi (\Ecal ^{+})$.  Before stating the identity we need
to describe how the indices $\aA $ and $\bb $ will correspond.
 
\begin{defn}
A south-east lattice path $\gamma$ from $(0,n)$ to $(m,0)$, for
positive integers $m, n$, is \emph{admissible} if it starts with a
south step and ends with an east step; that is, $\gamma$ has a step
from $(0,n)$ to $(0,n-1)$ and one from $(m-1,0)$ to $(m,0)$.  Define
$\mathbf{b}(\gamma) = (b_1, \dots, b_m)$ by taking $b_i = (
\text{vertical run of $\gamma$ at $x = i-1$} )$ for $i = 1,\ldots, m$
and $\mathbf{a}(\gamma)=(a_n, \dots, a_1)$ with $a_j = (
\text{horizontal run of $\gamma$ at $y = j -1$} ) $ for $j = 1,\ldots,
n$.  Set $D_\gamma= D_{\mathbf{b}(\gamma)}$ and $E_\gamma =
E_{\mathbf{a}(\gamma)}$.
\end{defn}

Note that if $\gamma ^{*}$ is the transpose of an admissible path
$\gamma $ with $\bb (\gamma) = (b_1, \ldots, b_m)$ and $\aA (\gamma )
= (a_n, \ldots, a_1)$, as above, then $\aA (\gamma ^{*}) =
(b_m,\dots,b_1) $ and $\bb (\gamma ^{*}) = (a_1,\dots,a_n)$, and
$E_\gamma =\Phi (D_{\gamma^*})$.

\begin{example}\label{example of an admissible lattice path}
Paths $\gamma$ and $\gamma^*$ below are both admissible.  $\gamma$ is
from $(0,8)$ to $(4,0)$ with $\bb(\gamma)=(2,1,3,2)$ and
$\aA(\gamma)=(0,1,1,0,0,1,0,1)$, whereas $\gamma^*$ is from $(0,4)$ to
$(8,0)$ and has $\aA(\gamma^*)=(2,3,1,2)$ and
$\bb(\gamma^*)=(1,0,1,0,0,1,1,0)$.
\[
\begin{tikzpicture}[xscale = 0.5,yscale = 0.5]
	\draw[step=1cm,gray!20,very thin] (0,0) grid (4,8);
	\draw[thick] (0,8)--(0,6)--(1,6)--(1,5)--(2,5)--(2,2)--(3,2)--(3,0)--(4,0);
	\node[below left]  at (1,4) {$\gamma$};
	\draw[step=1cm,gray!20,very thin] (7,0) grid (15,4);
	\draw[thick] (7,4)--(7,3)--(8,3)--(9,3)--(9,2)--(10,2)--(11,2)--(12,2)--(12,1)--(13,1)--(13,0)--(14,0)--(15,0);
	\node[below left]  at (8,1) {$\gamma^*$};
\end{tikzpicture}
\]
\end{example}

\begin{prop}\label{prop:D=E}
For every admissible path $\gamma$ we have $D_\gamma = E_\gamma $.
\end{prop}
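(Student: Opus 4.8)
The plan is to prove $D_\gamma = E_\gamma$ for all admissible paths $\gamma$ by induction on the number of ``inner corners'' of $\gamma$, using the product formulas \eqref{e:product of D} and \eqref{e:product of E} to reduce both sides to a common base case. First I would identify the base case: the path $\gamma$ consisting of a single vertical run of $n$ south steps followed by a single horizontal run of $m$ east steps. For such a staircase-free path, $\bb(\gamma) = (n, 0, \ldots, 0)$ (an $m$-tuple) and $\aA(\gamma) = (m, 0, \ldots, 0)$ (an $n$-tuple), so $D_\gamma$ and $E_\gamma$ are built from the simplest shuffle-algebra elements. Here I would compute both $D_\gamma$ and $E_\gamma$ directly via Definition~\ref{def:Negut-Db} and the maps in \eqref{e:psi-spelled-out}--\eqref{e:psi-op-spelled-out}, and check equality by hand, possibly invoking the $l=1$ identification $D_a = E_a$ from Proposition~\ref{prop:Da-Ea} as the degenerate seed.

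The inductive step is where the product formulas do the real work. Given an admissible path $\gamma$ with at least one inner corner (a spot where an east step is immediately followed by a south step as we traverse southeast, creating adjacent nonzero entries in the run-length data), I would factor $D_\gamma$ at that corner. Concretely, if the corner splits $\bb(\gamma)$ into a prefix $(b_1, \ldots, b_l)$ and suffix $(b_{l+1}, \ldots, b_m)$ with $b_l, b_{l+1} > 0$, then \eqref{e:product of D} reads
\begin{equation*}
D_{b_1,\ldots,b_l}\, D_{b_{l+1},\ldots,b_m} = D_{b_1,\ldots,b_m} - q\,t\, D_{b_1,\ldots,b_l+1, b_{l+1}-1,\ldots,b_m},
\end{equation*}
which expresses $D_\gamma = D_{b_1,\ldots,b_m}$ as a product of two ``simpler'' Negut elements corrected by a single path $\gamma'$ obtained by pushing the corner one unit. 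Applying $\Phi$ to this relation, together with $\Phi(D_\gamma) = E_{\gamma^*}$ and the corresponding $E$-product formula \eqref{e:product of E}, I would derive the parallel factorization for $E_\gamma$. The key is that both factorizations are governed by the \emph{same} combinatorial splitting of $\gamma$, so the inductive hypothesis applied to the factors and to the shifted path $\gamma'$ forces $D_\gamma = E_\gamma$.

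The main obstacle I anticipate is the bookkeeping needed to match the two factorizations precisely: the $D$-product cuts $\bb(\gamma)$ into a prefix/suffix (reading columns left to right), whereas the $E$-product cuts $\aA(\gamma)$ (reading rows), and I must verify that choosing the \emph{same} inner corner of $\gamma$ yields factors that are themselves admissible paths with matching $D$-- and $E$--indices, and that the correction term $\gamma'$ is again admissible with strictly fewer inner corners. This requires carefully tracking how an east-then-south corner of $\gamma$ corresponds, under transposition, to a south-then-east feature of $\gamma^*$, and confirming the shift $b_l \mapsto b_l+1$, $b_{l+1}\mapsto b_{l+1}-1$ in the $D$ world is mirrored by the analogous shift in the $E$ world under $\Phi$. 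Once the combinatorial dictionary between corners of $\gamma$, corners of $\gamma^*$, and the two product formulas is set up cleanly, the algebra collapses and the induction goes through; I would also need to confirm that the ``simpler'' factors produced at each step are genuinely closer to the base case under whatever complexity measure (number of inner corners, or total area) I induct on, so that the recursion terminates.
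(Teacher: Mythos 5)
Your inductive step is essentially the one the paper uses: factoring $\gamma = \nu\cdot\eta$ at an east--south corner and comparing \eqref{e:product of D} with \eqref{e:product of E} gives $D_\gamma - q\,t\,D_{\gamma'} = E_\gamma - q\,t\,E_{\gamma'}$ once the inductive hypothesis $D_\nu = E_\nu$, $D_\eta = E_\eta$ is in hand. But the proposal founders on the base case, which is where all the real content of Proposition~\ref{prop:D=E} lives. For the hook path $\gamma_0$ you must show $D_{n,0^{m-1}} = E_{0^{n-1},m}$ (note that $m$ sits in the \emph{last} slot of $\aA(\gamma_0)$, not the first). This equates an element of $\Ecal^+$ defined by symmetrizing a rational function in $m$ variables with an element of $\Phi(\Ecal^+)$ defined by symmetrizing in $n$ variables; there is no direct ``check by hand,'' and the fallback you cite does not exist: Proposition~\ref{prop:Da-Ea} does \emph{not} say $D_a = E_a$ --- it only says that the elements $p_1[-MX^{1,a}]$ and $p_1[-MX^{a,1}]$ act on $\Lambda$ as the operators bearing those names, and in fact $D_a \neq E_a$ for $a \neq 1$ (e.g.\ $E_0\cdot 1 = e_1$ while $D_0\cdot 1 = 1$). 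The paper's handling of this base case is the heart of its proof: it runs an outer induction on the endpoints $(m,n)$, settles $n=1$ (i.e.\ $E_m = D_{1,0^{m-1}}$) using the internal action relations and the commutator identity \eqref{e:D-commutator}, and for general $(m,n)$ applies $\Ad p_1(X^{1,0}) = -\tfrac{1}{M}[D_0,-]$ to the inductively known identity $D_{n,0^{m-2}} = E_{0^{n-1},m-1}$; evaluating the two sides via Proposition~\ref{prop:commutator-identity} and \eqref{e:adE} yields the extra relation $\sum_{k=0}^{n-1} D_{(n-k,k,0^{m-2})} = \sum_{k=0}^{n-1} E_{(0^{n-1},m-1)+\varepsilon_{n-k}}$, whose left side the corner-pushing congruences collapse to $(1+q\,t+\cdots+(q\,t)^{n-1})D_{\gamma_0}$, pinning down $D_{\gamma_0} = E_{\gamma_0}$. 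Without a substitute for this step --- and Negut's commutator formula seems unavoidable here --- your induction has no anchor.

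Two smaller repairs are also needed. First, your primary complexity measure fails: pushing a corner can \emph{increase} the number of inner corners (push the unique corner of the path with steps $S,E,E,S,S,E$ and you obtain the staircase $S,E,S,E,S,E$, which has two); the quantity that strictly drops is the enclosed area $h(\gamma)$, which you mention only parenthetically. Second, the corner-pushed path $\gamma'$ has the \emph{same} endpoints as $\gamma$, so it is not covered by the induction hypothesis applied to the factors $\nu,\eta$; you need a genuine double induction --- outer on $(m,n)$ to handle $\nu$ and $\eta$, inner on area running upward from $\gamma_0$ --- or else the paper's bookkeeping device of the subspace $V'$ of valid linear identities, modulo which $D_\gamma \equiv (q\,t)^{h(\gamma)}D_{\gamma_0}$.
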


\begin{proof}
Let $\gamma $ be an admissible path $\gamma$ from $(0,n)$ to $(m,0)$,
where $m,n$ are positive integers.

We first establish the case when $n = 1$.  In this case, $E_{\gamma }
= E_{m} = p_{1}[-M X^{m,1}]$ and $D_{\gamma } = D_{10^{m-1}}$.  If
$m=1$, these are $E_{1} = D_{1} = p_{1}[-M X^{1,1}]$.  In general,
\eqref{e:internal-action} implies $E_{m}= p_1[-M X^{m,1}] = (\Ad
p_1(X^{1,0}))^{m-1} p_1[-M X^{1,1}] = (\Ad p_1(X^{1,0}))^{m-1} D_{1}$,
while \eqref{e:Ad-examples} and the commutator identity
\eqref{e:D-commutator} imply $(\Ad p_1(X^{1,0})) D_{10^k} =
[p_1(X^{1,0}),\, D_{10^k}] = -(1/M)[D_0, D_{10^k}] = D_{10^{k+1}}$,
and therefore $(\Ad p_{1}(X^{1,0}))^{m-1} D_{1} = D_{10^{m-1}}$.

Using the involution $\Phi $, we can deduce the $m=1$ case from the
$n=1$ case:
\begin{equation}\label{e:Dn=D}
D_\gamma = D_n = \Phi (E_n) = \Phi (D_{1,0^{n-1}}) = E_{0^{n-1},1} =
E_\gamma.
\end{equation}

For $m, n > 1$, we proceed by induction, assuming that the result
holds for paths from $(0,n')$ to $(m',0)$ when $m'\leq m$ and $n'\leq
n$ and $(m',n')\not =(m,n)$.

For a given $m,n$, there are finitely many admissible paths $\gamma $,
and thus a finite dimensional space $V$ of linear combinations $\sum
_{\gamma } c_{\gamma } D_{\gamma }$ involving these paths.
Let $V'\subseteq V$ denote the subspace consisting of linear combinations
which form the left hand side of a valid instance of the identity
\begin{equation}\label{e:identity-space}
\sum
_{\gamma } c_{\gamma } D_{\gamma } = \sum
_{\gamma } c_{\gamma } E_{\gamma }.
\end{equation}
Note that $V'=V$ if and only if 
$D_\gamma = E_\gamma$ for all the paths $\gamma $ in question.

We will use the induction hypothesis to construct enough instances of 
\eqref{e:identity-space} to reduce each $D_\gamma$ modulo $V'$
to  a scalar multiple of
$D_{\gamma _{0}}$, where $\gamma_0$ is the path with a south run from
$(0,n)$ to $(0,0)$ followed by an east run to $(m,0)$.  We will then
prove one more instance of \eqref{e:identity-space} for which the left
hand side reduces 
to a non-zero scalar multiple of $D_{\gamma _{0}}$, showing that $V' = V$.

Suppose now that $\gamma \not =\gamma _{0}$.  Then $\gamma$ contains
an east step from $(m_1-1, n_2)$ to $(m_1, n_2)$ and a south step from
$(m_1, n_2)$ to $(m_1, n_2-1)$ for some $m_1+m_2=m$ and $n_1+n_2=n$.
In particular, $\gamma = \nu \cdot \eta$ for shorter admissible paths
$\nu$ and $\eta$, where $\nu \cdot \eta$ is defined to be the lattice
path obtained by placing $\nu$ and $\eta$ end to end; thus $\nu \cdot
\eta$ traces a copy of $\nu$ from $(0,n_1 + n_2)$ to $(m_1,n_2) $ and
then traces a copy of $\eta$ from $(m_1, n_2)$ to $(m_1 + m_2,0)$.

Define $\gamma'=\nu \cdot' \eta$ to be the admissible path obtained
from $\nu\cdot \eta$ by replacing the east-south corner at $(m_1,n_2)$
with a south-east corner at $(m_1-1,n_2 -1)$; $\gamma'$ contains a
south step from $(m_1-1, n_2)$ to $(m_1-1, n_2-1)$ and an east step
from $(m_1 -1, n_2 -1)$ to $(m_1 , n_2-1)$.

The product formulas $\eqref{e:product of D}$ and $\eqref{e:product of
E}$ imply that the elements corresponding to the paths constructed in
this way satisfy
\begin{equation} \label{e: path product of D}
D_\nu D_\eta = D_{\nu\cdot\eta} - q\, t\, D_{\nu\cdot'\eta}
\qquad\text{and}\qquad E_\nu E_\eta = E_{\nu\cdot\eta} - q\, t\,
E_{\nu\cdot'\eta} \,.
\end{equation}
By induction, $D_\nu=E_\nu$ and $D_\eta =E_\eta$, so \eqref{e: path
product of D} implies $D_{\gamma } - q\, t\, D_{\gamma '} = E_{\gamma
} - q\, t\, E_{\gamma '}$.  In other words, in terms of the space $V'$
defined above, we have $D_{\gamma } \equiv q\, t\, D_{\gamma '}
\pmod{V'}$.  Using this repeatedly, we obtain $D_{\gamma }\equiv (q\,
t)^{h(\gamma )} D_{\gamma _{0}}\pmod{V'}$ for every path $\gamma $,
where $h(\gamma )$ is the area enclosed by the path $\gamma $ and the
$x$ and $y$ axes.

To complete the proof it suffices to establish one more identity of
the form \eqref{e:identity-space}, for which the congruences
$D_{\gamma }\equiv (q\, t)^{h(\gamma )} D_{\gamma _{0}}\pmod{V'}$
reduce the left hand side to a non-zero scalar multiple of $D_{\gamma
_{0}}$.

We can assume by induction that $D_{n,0^{m-2}} = E_{0^{n-1},m-1}$,
since this case has the same $n$ and a smaller $m$.  Taking the
commutator with $p_{1}(X^{1,0})$ on both sides gives
\begin{equation}\label{induction step of D(gamma_0) = E(gamma_0)}
-\frac{1}{M}[D_{0},\, D_{n,0^{m-2}}] = [p_1(X^{1,0}),\, D_{n,0^{m-2}}]
= (\Ad p_1(X^{1,0})) E_{0^{n-1},m-1}.
\end{equation}
Using~\eqref{e:D-commutator} on the left hand side and~\eqref{e:adE}
on the right hand side gives
\begin{equation}\label{e: sum D = sum E}
\sum\limits_{k=0}^{n-1}D_{(n-k,k,0^{m-2})} = \sum\limits_{k =
0}^{n-1} E_{(0^{n-1},m-1)+\varepsilon_{n-k}}.
\end{equation}
Now, for $1\leq k\leq n-1$, we have $D_{(n-k,k,0^{m-2})} = D_{\gamma
}$ and $E_{(0^{n-1}, m-1)+\varepsilon_{n-k}} = E_{\gamma }$ for an
admissible path with $h(\gamma ) = k$, as displayed below.
\[
\begin{tikzpicture}[xscale = 0.5,yscale = 0.5]
	\draw[step=1cm,gray!20,very thin] (0,0) grid (4,7);
	\draw[thick] (0,7)--(0,3)--(1,3)--(1,0)--(4,0);
	\draw [decorate,decoration={brace,amplitude= 6pt,mirror,raise=3pt},yshift=0pt]
	(-0.1,3.05) -- (-0.1, 6.95) node [black,midway,xshift= 0.9 cm] {\footnotesize $n-k$};
	\draw [decorate,decoration={brace,amplitude= 6pt,mirror,raise=3pt},yshift=0pt]
	(0.9,0.05) -- (0.9, 2.95) node [black,midway,xshift= 0.5 cm] {\footnotesize $k$};
	\draw [decorate,decoration={brace,amplitude= 6pt,mirror,raise=3pt},yshift=0.5pt]
	(1,0.1) -- (4, 0.1) node [black,midway,yshift= -0.5 cm] {\footnotesize $m-1$};
\end{tikzpicture}
\]
This shows that \eqref{e: sum D = sum E} is an instance of
\eqref{e:identity-space}.  The previous congruences reduce the left
hand side of \eqref{e: sum D = sum E} to $(1+q\, t+\cdots +(q\,
t)^{n-1}) D_{\gamma _{0}}$.  Since the coefficient is non-zero, we
have now established a set of instances of \eqref{e:identity-space}
whose left hand sides span $V$.
\end{proof}

\begin{cor}\label{cor:E-last-index}
For any indices $a_1, \dots, a_l$, we have
\begin{equation}\label{e:E-last-index}
E_{a_l, \dots, a_2, a_1}\cdot 1 = E_{a_l, \dots, a_2, 0}\cdot 1.
\end{equation}
\end{cor}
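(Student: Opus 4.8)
The plan is to exploit that, under the anti-isomorphism $\psi^{\op}$ of Proposition~\ref{prop:shuffle-isomorphism}, the last index $a_1$ corresponds to the variable $z_1$ in the defining rational function~\eqref{e:Negut-phi}, and hence to the rightmost elementary factor $p_1[-MX^{a_1,1}] = E_{a_1}$ in~\eqref{e:psi-op-spelled-out}; this factor is the first to act on $1$, and it acts the same way for every value of $a_1$. So first I would record the base fact $E_b\cdot 1 = e_1$ for all $b\in\ZZ$: by Proposition~\ref{prop:Da-Ea}(ii) and~\eqref{e:Ea} we have $E_b = \nabla^b\, e_1(X)^\bullet\,\nabla^{-b}$, while $1 = \Htild_\emptyset$ and $e_1 = \Htild_{(1)}$ both have $\nabla$-eigenvalue $1$ by~\eqref{e:nabla}; thus $\nabla^{-b}\cdot 1 = 1$, $e_1^\bullet\cdot 1 = e_1$, and $\nabla^b e_1 = e_1$, so $E_b\cdot 1 = e_1$ independently of $b$.

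Next I would peel the last index off using the product formula~\eqref{e:product of E}. Writing $\aA=(a_l,\ldots,a_2,a_1)$ and splitting after $a_2$ gives
\begin{equation*}
E_{a_l,\ldots,a_1} = E_{a_l,\ldots,a_2}\,E_{a_1} + q\,t\,E_{a_l,\ldots,a_3,a_2-1,a_1+1}.
\end{equation*}
Applying both sides to $1$ and using $E_{a_1}\cdot 1 = e_1$ turns the first summand into $E_{a_l,\ldots,a_2}\cdot e_1$, which no longer involves $a_1$, while the second summand has the same shape with $a_2\mapsto a_2-1$ and $a_1\mapsto a_1+1$. Iterating $N$ times yields
\begin{equation*}
E_{a_l,\ldots,a_1}\cdot 1 = \sum_{j=0}^{N-1}(q\,t)^j\,E_{a_l,\ldots,a_3,a_2-j}\cdot e_1 \;+\; (q\,t)^N\,E_{a_l,\ldots,a_3,a_2-N,a_1+N}\cdot 1.
\end{equation*}
This is exactly the geometric expansion $z_1^{a_1}/(1-q\,t\,z_1/z_2) = \sum_{j\ge 0}(q\,t)^j z_1^{a_1+j}z_2^{-j}$ of the first denominator factor in~\eqref{e:Negut-phi}: each resulting term is a concatenation product in $S$ of $z_1^{a_1+j}$ with a length-$(l-1)$ shuffle element, which $\psi^{\op}$, being an anti-homomorphism, sends to $E_{a_l,\ldots,a_3,a_2-j}\,E_{a_1+j}$ with rightmost factor $E_{a_1+j}$. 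In the limit the right-hand side becomes $\sum_{j\ge 0}(q\,t)^j\,E_{a_l,\ldots,a_3,a_2-j}\cdot e_1$, which is manifestly free of $a_1$; specializing $a_1=0$ gives~\eqref{e:E-last-index}.

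The hard part is justifying convergence: that the remainder $(q\,t)^N E_{a_l,\ldots,a_3,a_2-N,a_1+N}\cdot 1$ tends to $0$ and the series defines the correct element of $\kk\otimes\Lambda$. Since each $E_c$ raises symmetric-function degree by $1$, every term lies in the finite-dimensional degree-$l$ piece of $\Lambda$, so it suffices to check convergence coefficientwise in the $\{\Htild_\mu\}_{\mu\vdash l}$ basis. For this I would expand all the denominators of~\eqref{e:Negut-phi}, so that the action on $1$ becomes a sum over $\mathbf{n}$ of $(q\,t)^{|\mathbf{n}|}$ times a genuine product of operators $E_c = \nabla^c e_1^\bullet\nabla^{-c}$ (with the rightmost factor again collapsing via $E_b\cdot 1 = e_1$); the entire $\mathbf{n}$-dependence is then carried by powers of $\nabla$, whose eigenvalues $t^{n(\mu)}q^{n(\mu^*)}$ are monomials in $q,t$. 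Weighting these against $(q\,t)^{|\mathbf{n}|}$ produces, coefficientwise, geometric series in $q$ and $t$ that sum inside $\kk=\QQ(q,t)$. This is precisely the raising-operator-series interpretation of \S\ref{ss:GL-characters}, so the formal manipulation is legitimate and the remainder vanishes in the corresponding completion; with convergence in hand, the displayed series is independent of $a_1$ and the corollary follows.
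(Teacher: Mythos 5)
Your algebraic skeleton is sound: the base fact $E_b\cdot 1 = e_1$ for all $b$ is correct, and iterating the product formula \eqref{e:product of E} does give, for every $N$, the \emph{exact} identity
\begin{equation*}
E_{a_l,\ldots,a_1}\cdot 1 \;=\; \sum_{j=0}^{N-1}(q\,t)^j\,E_{a_l,\ldots,a_3,a_2-j}\cdot e_1 \;+\; (q\,t)^N\,E_{a_l,\ldots,a_3,a_2-N,a_1+N}\cdot 1 .
\end{equation*}
But this maneuver only relocates the entire content of the corollary into the convergence claim, and that claim is never established. All terms are elements of the degree-$l$ part of $\Lambda$ over $\kk=\QQ(q,t)$, a field with no topology in which $(q\,t)^N\to 0$; to conclude anything you must fix a completion of $\kk$ and prove that the $\Htild_\mu$-coefficients of $E_{a_l,\ldots,a_3,a_2-N,a_1+N}\cdot 1$ grow strictly slower than $(q\,t)^{-N}$ there. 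This is a genuine estimate about Negut elements with increasingly negative indices, and it is not routine: already for $l=2$ one computes $E_{a_2-N,a_1+N}\cdot 1=\bigl(t^{a_2-N}\Htild_{(11)}-q^{a_2-N}\Htild_{(2)}\bigr)/(t-q)$, whose coefficients blow up like $q^{-N}$ and $t^{-N}$, so the remainder tends to $0$ only because the blow-up is slower than $(q\,t)^{-N}$ --- a fact you would have to prove for all $l$, non-circularly, since it is exactly the statement you are using the remainder to establish.

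The justification you offer does not fill this gap. Expanding all denominators of \eqref{e:Negut-phi} and pushing the expansion through $\psi^{\op}$ term by term presupposes that $E_{\aA}\cdot 1$ equals an infinite sum $\sum_{\mathbf n}(q\,t)^{|\mathbf n|}E_{c_l(\mathbf n)}\cdots E_{c_1(\mathbf n)}\cdot 1$; but $\psi^{\op}$ is defined only on the shuffle algebra, i.e.\ on Laurent polynomial representatives --- this is precisely why Definition~\ref{def:Negut-Db} passes to a polynomial $\nu(z)$ --- so this ``full expansion'' is an infinite-series identity of exactly the kind in question, not an available fact. Moreover, the claimed coefficientwise convergence is unsound as stated: writing each term as $\nabla$-conjugates of $e_1^{\bullet}$, the geometric ratios that arise are monomials $q^{\alpha}t^{\beta}$ with $(\alpha,\beta)=(1+\mathrm{col}'-\mathrm{col},\,1+\mathrm{row}'-\mathrm{row})$ determined by consecutive added boxes in chains of partitions; these exponents take mixed signs in both orders once $l$ is large (e.g.\ $(-8,5)$ and $(5,-8)$ both occur), and no valuation on $\kk$, nor any region $|q|,|t|<1$, makes all such series converge simultaneously, so one would additionally need cancellation arguments that you do not address. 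The ``raising operator series'' convention of \S\ref{ss:GL-characters} concerns symmetrizations of Laurent polynomials in the $x$-variables and does not legitimize this operator-level manipulation. The paper avoids all of this by staying finite and exact: it conjugates by $\nabla^r$ (using $\nabla f(X^{m,n})\nabla^{-1}=f(X^{m+n,n})$ and $\nabla\cdot 1=1$) to reduce to strictly positive indices, and then uses Proposition~\ref{prop:D=E} to convert the claim into the known fact that $D_{b_1,\ldots,b_n,0,\ldots,0}\cdot 1$ is independent of the number of trailing zeros. If you wish to salvage your route, the missing ingredient is the uniform growth bound above; without it, the proof fails at its decisive step.
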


\begin{proof}
To rephrase, we are to show that $E_{a_{l}, \ldots, a_{2},
a_{1}}\cdot 1$ does not depend on $a_{1}$.  The symmetry
$f(X^{m,n})\mapsto f(X^{m+rn,n})$ of $\Phi (\Ecal ^{+})$
sends
$E_{a_{l},\ldots,a_{1}}$ to $E_{a_{l}+r,\ldots,a_{1}+r}$.  By
\cite[Lemma 3.4.1]{paths}, the action of $\Ecal $ on $\Lambda $
satisfies $\nabla ^{r} f(X^{m,n}) \nabla ^{-r} = f(X^{m+rn,n})$, and
since $\nabla (1) = 1$, this gives $\nabla ^{r} E_{a_{l}, \ldots,
a_{2}, a_{1}}\cdot 1 = E_{a_{l}+r, \ldots, a_{2}+r, a_{1}+r}\cdot 1$.
Hence, we can reduce to the case that $a_{i}>0$ for all $i$.

By \cite[Lemma 3.6.2]{paths}, we have that
$D_{b_{1},\ldots,b_{n},0,\ldots,0} \cdot 1$ is independent of the
number of trailing zeroes.  In the case that $b_{i}\geq 0$ for all $i$
and $b_{1}>0$, this and Proposition~\ref{prop:D=E} imply that
$E_{a_{l},\ldots,a_{1}}\cdot 1$ is independent of $a_{1}$, provided
that $a_{i}\geq 0$ for all $i$ and $a_{1}>0$.  However, we already saw
that this suffices.
\end{proof}

\subsection{Shuffling the symmetric function side of the Extended
Delta Conjecture}
\label{ss:shuffling}

We can now give the promised reformulation of \eqref{e:sside}.

\begin{thm}\label{t:ourLHS}
For $0\leq l<m\leq N$, we have
\begin{equation}\label{e:ourLHS}
\bigl( \omega ( h_l[B]e_{m-l-1}[B-1] e_{N-l}) \bigr)
(x_1,\ldots,x_{m}) = \Hbold^{m}_{q,t} \left( \phi(x)\right)_{\rm
pol}\,,
\end{equation}
where
\begin{equation}\label{e:phi-for-our-LHS}
\phi(x)= \frac{x_1\cdots x_{m}}{\prod _{i } (1 - q\, t\, x_i/x_{i+1})}
h_{N-m}(x_1,\ldots,x_{m}) \overline{e_l(x_2,\ldots,x_{m})},
\end{equation}
and $\overline{e_l(x_2,\ldots,x_{m})} =
e_l(x_2^{-1},\ldots,x_{m}^{-1})$ by our convention on the use of the
overbar.
\end{thm}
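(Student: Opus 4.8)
The entire statement is an operator identity in disguise, and the first move is to expose it as such. By Proposition~\ref{prop:H-formula}, the right hand side of~\eqref{e:ourLHS} equals $\omega\bigl(\psi(\phi(z))\cdot 1\bigr)(x_1,\ldots,x_m)$, where $\psi(\phi(z))\in\Ecal^+$ is the image under $\psi$ of the class of $\phi$ in the shuffle algebra $S$. Cancelling the outer $\omega$ on both sides, the theorem becomes equivalent to the operator identity
\begin{equation*}
\psi(\phi(z))\cdot 1 \;=\; h_l[B]\,e_{m-l-1}[B-1]\,e_{N-l}
\end{equation*}
in $\Lambda$. The plan is to prove this by reading off the three tensor factors of $\phi$ in~\eqref{e:phi-for-our-LHS} one at a time as operations inside $\Ecal$, and then matching them against the eigenoperators on the right.

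The core $x_1\cdots x_m/\prod_i(1-q\,t\,x_i/x_{i+1})$ is exactly the shuffle representative of the Negut element $D_{1,\ldots,1}$ (with $m$ ones) of Definition~\ref{def:Negut-Db}; its transpose is the staircase path, so Proposition~\ref{prop:D=E} identifies it with $E_{1,\ldots,1}$. This identification is what lets me move freely between the $\psi$-picture, in which Proposition~\ref{prop:H-formula} evaluates $\Hbold^m_{q,t}(\phi)_{\rm pol}$, and the $\psi^{\op}$-picture, where the adjoint-action formula of Lemma~\ref{lem:Ad-formula} and the stability statement of Corollary~\ref{cor:E-last-index} are available. The factor $h_{N-m}(z_1,\ldots,z_m)$ is symmetric in all $m$ variables, hence equals $(\omega e_{N-m})(z)$, so multiplying the representative by it is an instance of Lemma~\ref{lem:Ad-formula} (or its $\widehat{\SL_2(\ZZ)}$-rotated form needed to stay on the $\psi$-side): it realizes the adjoint action of a distinguished copy of $\Lambda_{\kk}(X^{1,0})$. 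Together, the core and $h_{N-m}$ are precisely the $l=0$ content of the statement, which is a variant of the generalized Shuffle Theorem of~\cite{paths}; I would either cite that result or re-derive it in the present notation, obtaining $\Delta'_{e_{m-1}}e_N$ on the symmetric-function side. Throughout, the product formulas~\eqref{e:product of D} and~\eqref{e:product of E} are used to expand the dressed core into a sum of honest Negut elements $D_{\bb}$ to which these manipulations apply termwise.

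The new ingredient, relative to the plain theorem, is the last factor $\overline{e_l(z_2,\ldots,z_m)}$, an $e_l$ in the inverse variables $z_2^{-1},\ldots,z_m^{-1}$ that pointedly omits $z_1$. This omission is not accidental: because the multiplier does not involve $z_1$ it is \emph{not} a full-variable symmetric function and so is not literally an adjoint action. The device for getting around this is Corollary~\ref{cor:E-last-index}, which says exactly that $E_{a_l,\ldots,a_1}\cdot 1$ is independent of the distinguished last index, corresponding to the $z_1$-slot. Invoking it allows the $z_1$-variable to be inserted or ignored at will, after which $\overline{e_l(z_2,\ldots,z_m)}$ behaves, for the purpose of applying the element to $1$, like a genuine symmetric function and can be fed to the adjoint-action dictionary. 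The upshot I am aiming for is that this single factor simultaneously inserts the extra eigenoperator $h_l[B]$ and lowers the subscript, converting $\Delta'_{e_{m-1}}e_N$ into $\Delta_{h_l}\Delta'_{e_{m-l-1}}e_{N-l}$.

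The hard part, and the step I expect to consume most of the work, is reconciling the plethystic shifts. Proposition~\ref{prop:E-action} records that $f(X^{1,0})$ acts on $\Lambda$ as the eigenoperator $(\omega f)[B-1/M]$, whereas the right-hand side is written with the delta-operator shifts $f[B]$ and $f[B-1]$; passing between $[B-1/M]$, $[B]$, and $[B-1]$ is where the antipode half of the adjoint action $(\Ad f)\zeta=\sum f_{(1)}\zeta\,S(f_{(2)})$ and the $\nabla$-conjugation symmetry $\nabla^a f(X^{m,n})\nabla^{-a}=f(X^{m+an,n})$ must be tracked with care, and it is what forces the offsets $m-l-1$, $l$, and the degree $N-l$ to come out exactly. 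I would pin down this bookkeeping first on the generators $p_1(X^{a,1})$ and $p_1(X^{1,a})$, where Proposition~\ref{prop:Da-Ea} supplies the clean descriptions $D_a$ and $E_a=\nabla^a e_1(X)^{\bullet}\nabla^{-a}$, and then propagate it through products using the coproduct rule $(\Ad f)(\zeta_1\zeta_2)=\sum\bigl((\Ad f_{(1)})\zeta_1\bigr)\bigl((\Ad f_{(2)})\zeta_2\bigr)$ recalled in~\S\ref{sss:remarks}. Once the shifts are shown to match, the operator identity, and with it~\eqref{e:ourLHS}, follows.
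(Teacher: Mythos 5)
Your opening reduction is sound and coincides with the paper's: by Proposition~\ref{prop:H-formula} the theorem is equivalent to the operator identity $\psi(\phi(z))\cdot 1 = h_l[B]\,e_{m-l-1}[B-1]\,e_{N-l}$, and expanding $h_{N-m}$ and $\overline{e_l}$ into monomials exhibits $\psi(\phi(z))$ as $\sum_{\sS,I} D_{\sS+(1^m)-\varepsilon_I}$ as in \eqref{e:phi(x)-expanded} (no product formulas are needed for this; multiplying the representative by a monomial just shifts the index vector). The first genuine gap is that your plan rests on the $l=0$ case being ``a variant of the generalized Shuffle Theorem of~\cite{paths}'' that can be cited. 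It cannot: what \cite{paths} evaluates is $D_{\bb}\cdot 1$ for a \emph{single} index vector $\bb$, whereas the statement you need, $e_{m-1}[B-1]\,e_N = \sum_{|\sS|=N-m} D_{\sS+(1^m)}\cdot 1$ --- that the Delta-conjecture symmetric function side is such a \emph{sum} --- is precisely the new content of this paper, and is why the Delta Conjecture did not already follow from \cite{paths}. Establishing it requires the formulas \eqref{e:f[B]-Ea-1} and \eqref{e:f[B-1]-Ea-1} for how $f[B]$ and $f[B-1]$ act on $E_{\aA}\cdot 1$ (derived from a Sweedler-expansion argument in which $g_{(2)}[B]\cdot 1 = g_{(2)}[0]$ collapses the adjoint action, plus Corollary~\ref{cor:E-last-index} for the $[B]\to[B-1]$ shift), the seed $E_{0^n} = e_n[-MX^{0,1}]$, and Proposition~\ref{prop:D=E} applied to every dressed element, not just the staircase core. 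So ``cite or re-derive'' really means ``re-derive,'' and the re-derivation is the bulk of the proof, which your outline does not contain.

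Second, your mechanism for the factor $\overline{e_l(z_2,\ldots,z_m)}$ would fail as stated. Corollary~\ref{cor:E-last-index} concerns the bottom-row index $a_1$ of $E_{a_l,\ldots,a_1}$; in the $\psi$-picture your variables index \emph{columns} ($D$-indices), and the two sides match up only after transposing each summand via $D_\gamma = E_\gamma$. In the actual proof the corollary's sole role is to justify replacing $f[z_n+\cdots+z_1]$ by $f[z_n+\cdots+z_2]$, i.e.\ the distinction between $f[B]$ and $f[B-1]$ on the row side; it does not make $\overline{e_l(z_2,\ldots,z_m)}$ behave like a symmetric function, and this factor does not ``insert $h_l[B]$.'' Under the transpose, $h_l[B]$ corresponds to distributing $l$ extra east steps over \emph{all} rows, while the $-\varepsilon_I$ decrements coming from $\overline{e_l}$ arise as the complement, inside columns $2,\ldots,m$, of the $m-l-1$ marked east--south corners recording which east steps came from $e_{m-l-1}[B-1]$. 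The bijective bookkeeping between the row-side sum $\sum_{\nu,\rr} E_{\rr+\aA(\nu)}\cdot 1$ and the column-side sum \eqref{e:hee} --- paths with marked corners, re-encoded by $x$-coordinates --- is the combinatorial heart of the paper's proof, and it is exactly the step missing from, and misattributed in, your proposal.
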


\begin{proof}
For any symmetric function $f$
set $g(X) = (\omega f)[X+1/M]$; then \eqref{e:X10-action} gives an identity in
$\Lambda $ for every $\zeta \in \Ecal $

\begin{equation}\label{e:f[B]-zeta-1}
f[B]\, \zeta \cdot 1 = g(X^{1,0})\, \zeta \cdot 1 = \sum ((\Ad
g_{(1)}(X^{1,0}))\, \zeta )\, g_{(2)}(X^{1,0}) \cdot 1,
\end{equation}
where $g[X+Y] = \sum g_{(1)}(X)g_{(2)}(Y)$ in Sweedler notation and we
used the general formula $g \,\zeta = \sum ((\Ad g_{(1)})
\zeta)g_{(2)}$.  Since $g[X+Y] = (\omega f)[X+Y+1/M]$, and $h[B]\cdot
1 = h[0]\cdot 1$ for any $h(X)$, the right hand side of
\eqref{e:f[B]-zeta-1} is equal to
\begin{multline}\label{e:f[B]-zeta-1a}
\sum ((\Ad \, (\omega f)_{(1)}(X^{1,0}))\, \zeta )\,
(\omega f)_{(2)}[X^{1,0}+1/M] \cdot 1 \\ 
= \sum ((\Ad \, (\omega f)_{(1)}(X^{1,0}))\, \zeta )\, (\omega
f)_{(2)}[0] \cdot 1 = ((\Ad \, (\omega f)(X^{1,0}))\, \zeta )\cdot 1.
\end{multline}
Let $n=N-l$.  Taking $\zeta = E_{a_{n},\ldots,a_{1}}$ and using
\eqref{e:adE}, this gives
\begin{equation}\label{e:f[B]-Ea-1}
f[B] E_{a_n,\ldots,a_1}\cdot 1 = f(z_{n},\ldots ,z_{1}) \mathbin{\Big|}
{z_n^{r_n}\cdots z_1^{r_1}\mapsto E_{a_n + r_n, \ldots, a_2 + r_2,
a_1+r_{1}}} \cdot 1.
\end{equation} 
By Corollary~\ref{cor:E-last-index}, the right hand side is a function
of $f(z_{n},\ldots,z_{2},1)$, since the substitution for the monomial
$z^{{\bf r}}$ does not depend on the exponent $r_{1}$.  Expressing
$f(z_{n},\ldots,z_{2},1)$ as $f[z_{n}+\cdots +z_{2}+1]$ and then
substituting $f[X-1]$ for $f(X)$ yields
\begin{equation}\label{e:f[B-1]-Ea-1}
f[B-1] E_{a_n,\ldots,a_1}\cdot 1 = f[z_{n} +\cdots +z_{2}] \mathbin{\Big|}
{z_n^{r_n}\cdots z_2^{r_2}\mapsto E_{a_n + r_n, \ldots, a_2 + r_2,
a_1}} \cdot 1.
\end{equation} 

By \cite[Proposition 6.7]{Negut14}, $E_{0^n} = \Phi(D_{0^n}) =
\Phi(e_n[-MX^{1,0}]) = e_n[-MX^{0,1}]$
(see also \cite[Proposition
3.6.1]{paths}).

Using \eqref{e:f[B-1]-Ea-1}, we therefore obtain
\begin{multline}\label{e: e_k-1[B-1]e_n(x) interms of E}
e_{k-1}[B-1]e_n =e_{k-1} [z_n+\cdots+z_2] \mathbin{\Big|} z_n^{r_n}\cdots
z_2^{r_2}\mapsto E_{r_n, \ldots, r_2, 0} \cdot
1\\
=\sum\limits_{|I|=k-1} E_{\varepsilon_I,0}\cdot 1 =
\sum\limits_{|I|=k-1} E_{\varepsilon_I,1}\cdot 1\,,
\end{multline}

where the sum is over subsets $I\subseteq [n-1]$ and $\varepsilon _{I} =
\sum _{i\in I}\varepsilon _{i}$.  The terms in the last sum are just
$E_{\aA(\nu)}\cdot 1$ for paths $\nu$ from $(0,n)$ to $(k,0)$ with
single east steps on any $k-1$ chosen lines $y=j$ for $j\in [n-1]$,
and a final east step at $y = 0$.  Denote the set of these admissible
paths by $\mathcal P_{k,n}$. For instance, with $n=8$ and $k=4$, the
path $\gamma$ in Example~\ref{example of an admissible lattice path}
corresponds to $E_{\gamma } = E_{0,1,1,0,0,1,0,1}$.

By~\eqref{e:f[B]-Ea-1}, applying $h_l[B]$ to
~\eqref{e: e_k-1[B-1]e_n(x) interms of E} gives
\begin{equation}\label{e: h_l[B]e_k-1[B-1]e_n(x) interms of E}
h_l[B]e_{k-1}[B-1]e_n = \sum_{\nu\in\mathcal P_{k,n}} \;
\sum_{\substack{\rr \in\mathbb N^{n} \\ |\rr |=l}}
E_{\rr +\aA(\nu)}\cdot 1 \,.
\end{equation}
This last expression is the sum of $E_{\gamma }\cdot 1$ over admissible paths
$\gamma $ from $(0,n)$ to $(k+l,0)$, together with a choice of $k-1$
indices $j\in [n-1]$ for which $\gamma $ has at least one east step on
the line $y = j$.  We can consider these indices as distinguishing
$k-1$ east-south corners in $\gamma $.  However, we can also distinguish
these corners by their $x$ coordinates, that is, by a set of $k-1$
indices $i\in [k+l-1]$ for which $\gamma $ has at least one south step
on the line $x = i$.  Setting $m = k+l$ and using
Proposition~\ref{prop:D=E}, this yields the identity
\begin{equation}\label{e:hee}
h_l[B] e_{m-l-1}[B-1] e_n =
\sum_{\substack{\sS \in \NN^{m}: |\sS |=n-k \\
I\subseteq [2,m],|I|=l}} D_{\sS + (1^{m})-\varepsilon_I}\cdot 1 \,.
\end{equation}
Now, since
\begin{equation}\label{e:phi(x)-expanded}
\sum _{\substack{\sS \in \NN ^{m}: |\sS|=n-k \\
I\subseteq [2,m],|I|=l}} {x^{\sS + (1^{m}) - \varepsilon_I}} = x_1\, x_2
\cdots x_{m} h_{n-k}(x_1,\ldots,
x_{m})\overline{e_l(x_2,\ldots,x_{m})}\,,
\end{equation}
the definition of $D_{\bb }$ and Proposition~\ref{prop:H-formula}
imply that 
\begin{equation}\label{e:ourLHS-final}
\omega \biggl(\, \,  \sum_{\substack{\sS \in \NN^{m}: |\sS |=n-k \\
I\subseteq [2,m],|I|=l}} D_{\sS + (1^{m})-\varepsilon_I}\cdot
1 \biggr)(x_{1},\ldots,x_{m}) = 
\Hbold ^{m}_{q,t}(\phi (x))_{\pol }
\end{equation}
with $\phi
(x)$ given by \eqref{e:phi-for-our-LHS}.
\end{proof}

\begin{remark}\label{r:sfsfinitesuffices}
For any ${\bf b}\in\ZZ^m$, \cite[Corollary 3.7.2]{paths} gives that
the Schur expansion of $\omega (D_{\bf b}\cdot 1)$ involves only
$s_\lambda(X)$ with $\ell(\lambda)\leq m$. Hence, although
Theorem~\ref{t:ourLHS} is a statement in $m$ variables, it determines
$\omega ( h_l[B]e_{m-l-1}[B-1] e_{N-l})$ by \eqref{e:hee}.
\end{remark}

\section{Reformulation of the combinatorial side}
\label{s:reform-combinatorial}
\subsection{}
We reformulate~\eqref{eq:EE} by explicitly extracting the coefficient of
$z^{N-m}$; the natural result involves a $q$-weighted tableau generating
function $N_{\beta/\alpha}$ rather than  partially labelled paths.  
For now, we  work only with the tableau description of $N_{\beta/\alpha}$, but
in \S\ref{S:LLTseries} we will see that  $N_{\beta/\alpha}$ is a truncation of
LLT series introduced by Grojnowski and Haiman in \cite{GrojHaim07}.

The $q$-weight in our reformulation involves two auxiliary statistics:
for $\eta,\tau\in\NN^m$, define
\begin{equation}
\label{e:detagamma}
d(\eta,\tau) =
\sum_{1 \leq j < r \leq m} \big| [\eta_{j},\eta_{j}+\tau_j]
      \cap [\eta_{r},\eta_{r}+\tau_r-1] \big| \,,
\end{equation}
with $[a,b]=\{a,\ldots,b\}$ and $[b]=[1,b]$, and for a vector $\eta$
of length $n$ and $I\subseteq [n]$, define
\begin{equation}
\label{e:hJeta}
h_I(\eta)=\left|\{(r<s) :  r\in I, s\not\in I,\eta_s=\eta_{r}+1\}\right|\,,
\end{equation}
where $(r<s)$ denotes a pair of positions $(r,s)$ in $\eta$ with
$1\leq r<s\leq n$.

Our reformulation of \eqref{eq:EE} is stated in the following theorem, proven
at the end of this section.

\begin{thm}\label{t:rhs_reformulation}
For $0\leq l < m\leq N$, we have
\begin{multline}
\label{e:rhs_reformulation} \langle z^{N-m}\rangle
\sum_{\substack{\lambda\in\Dyck_{N}\\ P\in\LD_{N,l}(\lambda)}}
t^{|\delta/\lambda|}\,\prod_{\substack{1<i\leq N\\
c_i(\lambda)=c_{i-1}(\lambda)+1}}
(1+z\,t^{-c_i(\lambda)}) q^{{\rm dinv}(P)}x^{\wt_+(P)} \\
= \sum_{\substack{J \subseteq [m-1]\\ |J|=l}} \,
\sum_{\substack{\tau,(0,\aA)\in \mathbb N^{m}\\ |\tau|=N-m}} t^{|\aA|}
q^{d((0,\aA),\tau)+h_J(\aA)} 
N_{((0,\aA)+(1^{m})+\tau )/((\aA,0)+\varepsilon_J)}(X;q)\,,
\end{multline}
where 
$N_{\beta/\alpha}$ is given by
Definition~\ref{d:F}, below.
\end{thm}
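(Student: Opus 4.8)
The plan is to prove \eqref{e:rhs_reformulation} by a statistic-preserving correspondence that reorganizes the labelled-path sum on the left into the tableau sum on the right. First I would expand the product $\prod_{c_i(\lambda) = c_{i-1}(\lambda)+1}\bigl(1 + z\, t^{-c_i(\lambda)}\bigr)$ and extract $\langle z^{N-m}\rangle$. This amounts to choosing a set $S$ of exactly $N-m$ of the columns $i$ with $c_i(\lambda) = c_{i-1}(\lambda)+1$ (those beginning a double east step) to contribute the factor $z\, t^{-c_i(\lambda)}$, the remaining such columns contributing $1$. After this step the left side becomes a sum over triples $(\lambda, P, S)$ weighted by $t^{|\delta/\lambda| - \sum_{i \in S} c_i(\lambda)}\, q^{\dinv(P)}\, x^{\wt_+(P)}$, where $\lambda \in \Dyck_N$ and $P \in \LD_{N,l}(\lambda)$.

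Next I would encode each pair $(\lambda, S)$ through the column-run data of $\lambda$ together with the selection $S$, producing the length-$(m-1)$ vector $\aA$ and the length-$m$ vector $\tau$ with $|\tau| = N - m = |S|$ that appear on the right. Since $|\delta/\lambda| = \sum_i c_i(\lambda)$, the encoding must be arranged so that $t^{|\delta/\lambda| - \sum_{i\in S} c_i(\lambda)} = t^{|\aA|}$, which fixes how $\aA$ records the unselected column data while $\tau$ records the selected columns $S$. A direct count then confirms the bookkeeping: the skew shape $\beta/\alpha$ with $\beta = (0,\aA) + (1^m) + \tau$ and $\alpha = (\aA,0) + \varepsilon_J$ has $|\beta| - |\alpha| = N - l$ cells, exactly the number of positive labels of $P$ (recall $N-l$ of the $N$ labels are nonzero). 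For parameter values $(\aA,\tau)$ not arising from an honest Dyck path the shape is degenerate and $N_{\beta/\alpha}$ must vanish, so the a priori infinite right-hand sum is supported on the finitely many shapes hit by the correspondence.

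I would then fix the shape data and sum over the partial labellings $P$. The $l$ zero labels, which never occur on the line $x=0$, record their positions as a subset $J \subseteq [m-1]$ with $|J| = l$, accounting for the shift $\varepsilon_J$ in $\alpha$; the positive labels, read along the cells of $\beta/\alpha$, become the entries of a tableau $T$ of shape $\beta/\alpha$. Under this reading the content weight $x^{\wt_+(P)}$ is precisely the $x$-weight of $T$, so that summing over positive labellings collapses to the tableau generating function $N_{\beta/\alpha}(X;q)$ of Definition~\ref{d:F}, which is of LLT type and whose internal $q$-statistic is the one I must match below.

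The crux, and the step I expect to be the main obstacle, is the three-way split of the $\dinv$ statistic. I would partition the pairs $(i<j)$ counted by $\dinv(P)$ in \eqref{e:rconditions} according to whether both labels are positive, exactly one is a zero label, or the pair is forced by the column and shift structure alone. The pairs among positive labels must become the internal $q$-statistic of $T$ recorded by $N_{\beta/\alpha}$; the pairs produced by the zero labels must sum to $h_J(\aA)$ as in \eqref{e:hJeta}; and the remaining pairs, which depend only on $(\aA,\tau)$ and not on the labels, must equal $d((0,\aA),\tau)$ as in \eqref{e:detagamma}. Establishing that the label-independent contributions are exactly $d((0,\aA),\tau) + h_J(\aA)$, and that what is left is precisely the statistic summed in $N_{\beta/\alpha}$, is the technical heart of the argument; once this decomposition $\dinv(P) = \bigl(\text{tableau }q\text{-stat of }T\bigr) + d((0,\aA),\tau) + h_J(\aA)$ is verified, summing over $P$ with fixed $(\aA,\tau,J)$ yields the factor $t^{|\aA|}\, q^{d((0,\aA),\tau)+h_J(\aA)}\, N_{((0,\aA)+(1^m)+\tau)/((\aA,0)+\varepsilon_J)}(X;q)$, and summing over $(\aA,\tau,J)$ gives \eqref{e:rhs_reformulation}.
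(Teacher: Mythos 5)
Your overall skeleton coincides with the paper's: expand the product in $z$, realize $\langle z^{N-m}\rangle$ as a choice of $N-m$ columns with $c_i(\lambda)=c_{i-1}(\lambda)+1$, encode the path together with that choice by $(\aA,\tau)$ so that the $t$-weight becomes $t^{|\aA|}$, note that dropping the Dyck inequalities at the free columns is harmless because the corresponding $N_{\beta/\alpha}$ vanish, record the zero labels by $J\subseteq[m-1]$, and reduce everything to the claim that $\dinv(P)$ decomposes as (tableau statistic of $T$) $+\,d((0,\aA),\tau)+h_J(\aA)$. That is indeed the decomposition the paper establishes, so your target is the right one.

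The gap is that you do not prove this decomposition, and the route you sketch for it would fail. First, $\dinv(P)$ counts \emph{pairs} of labels, while the statistic inside $N_{\beta/\alpha}$ counts increasing $w_0$-\emph{triples} of boxes, including virtual boxes carrying $\pm\infty$ at the ends of rows; matching the two is itself a nontrivial correspondence, which the paper imports from prior work ($\dinv(P)=h_{w_0}(S)$ on the full $N$-row tuple $\beta_{\aA\tau}/\alpha_{\aA\tau}$, \cite[Proposition 6.1.1]{paths}, used in Lemma~\ref{cor:pld2negtab}). Second, your proposed three-way partition of the dinv pairs --- both labels positive, exactly one zero, ``forced by the column and shift structure alone'' --- is not a partition, and its claimed counts are wrong. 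Concretely, take $N=3$, $m=2$, $l=1$ and the path $\lambda$ with column heights $\mathbf{c}=(0,0,1)$, i.e.\ $\aA=(0)$, $\tau=(0,1)$, $J=\{1\}$: every $P\in\LD_{3,1}(\lambda)$ has exactly one dinv pair, and that pair involves the zero label, yet $h_J(\aA)=0$ and $d((0,\aA),\tau)=1$, so the zero pair is accounted for by $d$, not by $h_J$ (it is also ``forced,'' so your second and third classes overlap). The correct bookkeeping cannot be done pair-by-pair in this naive way: the paper does it at the level of triples, where label-independent contributions genuinely exist (triples whose outer boxes flank empty rows), and it requires the auxiliary statistic $h'_J(\aA,\tau)$ that shifts weight between the zero contribution and the shape contribution --- deleting zeros costs $h_I(\alpha_{\aA\tau})$ (Lemma~\ref{l:dropzeroontriples}), deleting empty rows costs $d((0,\aA),\tau)-h'_J(\aA,\tau)$ (Lemma~\ref{l:GbigisGsmall}), and only the final identity $h_{J_\uparrow}(\alpha_{\aA\tau})=h'_J(\aA,\tau)+h_J(\aA)$ assembles the exponent $d((0,\aA),\tau)+h_J(\aA)$. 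Until you supply an argument of this kind, the proposal is a correct plan with its technical heart missing.
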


\subsection{Definition of
\texorpdfstring{$N_{\beta/\alpha}$}{N\_\textbeta/\textalpha}}
\label{ss:N-beta-alpha}

For $\alpha ,\beta \in \ZZ ^{l}$ such that $\alpha _{j}\leq \beta
_{j}$ for all $j$,
define $\beta /\alpha $ to be the tuple of single
row skew shapes $(\beta _{j})/(\alpha _{j})$ such that the $x$
coordinates of the right edges of boxes $a$ in the $j$-th row are the
integers $\alpha _{j}+1,\ldots,\beta _{j}$.  
The boxes just outside
the $j$-th row, adjacent to the left and right ends of the row, then
have $x$ coordinates $\alpha _{j}$ and $\beta _{j}+1$.  
We consider these two boxes to be adjacent to the ends of an empty
row, with $\alpha _{j} = \beta _{j}$, as well.

Given a tuple of skew row shapes \(\beta/\alpha\), three boxes
\((u,v,w)\) form a \emph{\(w_0\)-triple} when box $v$ is in row $r$ of
$\beta/\alpha$, boxes \(u\) and \(w\) are in or adjacent to a row $j$
with $j>r$, and the $x$-coordinates $i_u, i_v, i_w$ of these boxes
satisfy $i_u=i_{v}$ and $i_w=i_v+1$.  These triples are a special case
of \(\sigma\)-triples defined for any \(\sigma \in S_l\)
in~\cite{paths}.  We denote the number of $w_0$-triples in
$\beta/\alpha$ by $h_{w_0}(\beta/\alpha)$.  The reader can verify that
\begin{equation}\label{def:triples}
h_{w_0}(\beta/\alpha) = \sum_{r < j} \big| [\alpha_r+1,\beta_r] \cap
[\alpha_j, \beta_j] \big| \,.
\end{equation}

For a totally ordered alphabet \(\Acal\), a {\it row strict tableau}
of shape $\beta/\alpha$ is a map \(S \colon \beta/\alpha \to \Acal\)
that is strictly increasing on each row.  The set of these maps is
denoted by $\NSYT(\beta/\alpha,\Acal)$. For convenience, 
given $\alpha ,\beta \in \ZZ
^{l}$ with some $\alpha _{j} > \beta_{j}$,  we set 
$\NSYT(\beta/\alpha,\Acal) = \varnothing$.

A \(w_0\)-triple $(u,v,w)$ is an \emph{increasing} \(w_0\)-triple in
\(S\) if \(S(u) < S(v) < S(w)\), with the convention that \(S(u) =
-\infty\) if \(u\) is adjacent to the left end of a row of
\(\beta/\alpha\), and \(S(w) = \infty\) if \(w\) is adjacent to the
right end of a row. Let \(h_{w_0}(S)\) be the number of increasing
\(w_0\)-triples in $S$.

For \(S \in \NSYT(\beta/\alpha,\NN)\), define 
\begin{equation}
x^{\wt_+(S)} = \prod_{u \in \beta/\alpha, \, S(u) \neq 0}
x_{S(u)}\qquad\text{and}\qquad x^{\wt(S)} = \prod_{u \in \beta/\alpha}
x_{S(u)}\,.
\end{equation} 

\begin{figure}
    \centering
    \begin{tikzpicture}[scale=.43]
      \newcounter{c};
      \newcommand{\drawRow}[4]{ \def\r{#1}; \def\a{#2}; \def\b{#3};
        \def\labels{#4}; \draw [shift={(0,1.5*\r)}] (\b+2,1) grid
        (\a+2,0); \node at (\a+1,1.5*\r+.5) {$-\infty$}; \node at
        (\b+2.5,1.5*\r+.5) {$\infty$}; \setcounter{c}{\a+1}; \foreach
        \nn in \labels {\node at (\thec+1.5,1.5*\r+.5) {
            $\footnotesize \nn$};
          \addtocounter{c}{1}; } };
      \node at (-1,8) {$S=$};
      \drawRow{10}{0}{3}{1,3,4} \drawRow{9}{2}{3}{0}
      \drawRow{8}{2}{2}{} \drawRow{7}{1}{3}{3,5} \drawRow{6}{2}{2}{}
      \drawRow{5}{1}{1}{} \drawRow{4}{0}{1}{2} \drawRow{3}{0}{1}{1}
      \drawRow{2}{0}{2}{0,6} \drawRow{1}{1}{2}{4} \drawRow{0}{1}{1}{}
      \node at (8.5,8) {$\,$};
    \end{tikzpicture}
\caption{\label{fig:neg-tableau}
For $\beta=(12211123233)$, $\alpha=(11000121220)$, there are
$h_{w_0}(\beta/\alpha)=29$ $w_0$-triples in $\beta/\alpha$.  The row
strict tableau $S$ of shape $\beta/\alpha$ has $h_{w_0}(S)=15$
increasing $w_0$-triples, \(x^{\wt_+(S)} \!= x_1^2 x_2 x_3^2 x_4^2 x_5
x_6\), and \(x^{\wt(S)} = x_0^2 x_1^2 x_2 x_3^2 x_4^2 x_5 x_6\).  }
\end{figure}

\begin{defn}\label{d:F}
For $\alpha ,\beta \in \NN^{m}$, define
\begin{equation}
\label{e:Lcalenx} N_{\beta/\alpha} =
N_{\beta/\alpha}(X;q) = \sum_{S \in \NSYT(\beta/\alpha,
\ZZ_+)} q^{h_{w_0}(S)} x^{\wt(S)}\,.
\end{equation}
Note that if $\alpha _{j}> \beta _{j}$ for any $j$ then
$N_{\beta/\alpha}=0$ by our convention that $\NSYT(\beta/\alpha,\Acal)
= \varnothing$.
\end{defn}

\begin{remark}\label{r:Nfinitesuffices}
It is shown in~\cite[Proposition 4.5.2]{paths} and its proof that, for
$\alpha ,\beta \in \NN^{m}$, $N_{\beta/\alpha}$ is a symmetric
function whose Schur expansion involves only $s_\lambda$ where
$\ell(\lambda)\leq m$.
\end{remark}

\subsection{Transforming the combinatorial side}
\label{ss:transform-combinatorial}

To prove~\eqref{e:rhs_reformulation}, we first associate
each Dyck path with a tuple of row shapes recording vertical runs.

\begin{defn}\label{def:LLT-data}
The {\em LLT data} associated to a path $\lambda\in\Dyck_N$ is
\begin{equation*}
\beta= (1,c_2(\lambda)+1,\ldots,c_N(\lambda)+1)
\;\;\text{and}\;\;\alpha= (c_2(\lambda),\ldots,c_N(\lambda),0)\,,
\end{equation*}
where $c_{i}(\lambda)$ counts lattice squares between $\lambda$ and
the line segment connecting $(0,N)$ to $(N,0)$ in column $i$, numbered
from right to left, as in Lemma~\ref{lem:r-prod=c-prod}.
\end{defn}

Figure~\ref{fig:neg-tableau} shows the LLT data $\beta,\alpha$
associated to the path \(\lambda\) in Figure~\ref{fig:PF}.  Note that
$\beta_i$ (resp. $\alpha_i$) is the furthest (resp. closest) distance
from the diagonal to the path $\lambda$ on the line $x = N-i$, so that
$\beta_i-\alpha_i$ is the number of south steps of $\lambda$ on that
line.

This association allows us to relate $q$-weighted sums over
partial labellings to the $N_{\beta/\alpha}$.

\begin{lemma}\label{cor:pld2negtab}
For \(\lambda \in \Dyck_{N}\) and its associated LLT data
\(\alpha,\beta\), we have
\begin{equation}
\sum_{P\in\LD_{N,l}(\lambda)} q^{\dinv(P)} x^{\wt_+(P)} =
\sum_{\substack{I\subseteq [N-1] \\ |I|=l}} q^{h_I(\alpha)}
N_{\beta/(\alpha+\varepsilon_I)}(X;q)\,.
\end{equation}
\end{lemma}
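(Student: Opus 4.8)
The plan is to prove the identity by a single weight‑preserving bijection, after translating labelled Dyck paths into fillings of \(\beta/\alpha\). First I would set up the dictionary between south steps of \(\lambda\) and boxes of \(\beta/\alpha\). Since \(\beta_i-\alpha_i\) is the number of south steps of \(\lambda\) on the line \(x=N-i\) (the remark after Definition~\ref{def:LLT-data}), I identify the vertical run on that line with row \(i\) of \(\beta/\alpha\), sending the south steps in north‑to‑south order to the boxes in left‑to‑right order. The box of row \(i\) whose right edge has \(x\)-coordinate \(p\) then corresponds to the south step whose area contribution is \(r=p-1\); this is the one bookkeeping point, and it follows from the definition of the LLT data together with the ribbon relation \(\beta_i=\alpha_{i-1}+1\). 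Under this dictionary the condition that labels increase strictly from north to south along each vertical run becomes exactly row‑strictness, so \(\NN\)-labellings of \(\lambda\) correspond bijectively to row‑strict \(\NN\)-fillings \(S\) of \(\beta/\alpha\), with \(x^{\wt_+(P)}=x^{\wt_+(S)}\).

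Next I would establish the statistic identity \(\dinv(P)=h_{w_0}(S)\) for these full fillings. Under the dictionary, a type‑(a) pair in \eqref{e:rconditions} is a pair of boxes of equal content with the northern label smaller, and a type‑(b) pair is a pair whose contents differ by one with the northern label larger; in both cases the southern box plays the role of the box \(v\) of a \(w_0\)-triple (the one lying in the lower row). I would prove, for each fixed \(v\) of content \(p\), that the number of \(\dinv\)-pairs having \(v\) as their southern element equals the number of increasing \(w_0\)-triples whose lower box is \(v\). Comparing the two counts row by row over the higher rows \(J\), the local discrepancy is \(+1\) exactly when \(p\) is interior to row \(J\) (both neighbours \(u,w\) are genuine boxes), \(-1\) exactly when row \(J\) is empty and aligned at content \(p\), and \(0\) on the boundary. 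These contributions do not cancel row by row, but they cancel in total because of the ribbon structure \(\beta_J=\alpha_{J-1}+1\) forced by \(\lambda\) being a Dyck path; this is the same Dyck‑path/tableau equivalence expressed through the \(\sigma\)-triples of \cite{paths}. This global cancellation is the main obstacle.

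Finally I would account for the zero labels to pass to \(N_{\beta/(\alpha+\varepsilon_I)}\). In a partial labelling the \(l\) zeros are the minimal entries of their vertical runs, so they occupy the leftmost box, of content \(\alpha_i+1\), of \(l\) distinct rows \(i\); since no zero lies on \(x=0\), none of these rows is row \(N\), so they form a set \(I\subseteq[N-1]\) with \(|I|=l\). Deleting these boxes — equivalently replacing \(\alpha\) by \(\alpha+\varepsilon_I\) — leaves a row‑strict filling \(S'\in\NSYT(\beta/(\alpha+\varepsilon_I),\ZZ_+)\) with \(x^{\wt(S')}=x^{\wt_+(P)}\), and \(P\mapsto(I,S')\) is a bijection whose inverse reinserts a \(0\) as the new leftmost box of each row of \(I\). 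It then remains to check \(h_{w_0}(S)=h_I(\alpha)+h_{w_0}(S')\): an increasing triple whose box \(v\) is a zero forces its left neighbour \(u\) to be the \(-\infty\) boundary of a higher row \(J\) with \(\alpha_J=\alpha_i+1\) and its right neighbour \(w\) to be positive, so \(J\notin I\), and such triples number exactly \(|\{(i<J):i\in I,\,J\notin I,\,\alpha_J=\alpha_i+1\}|=h_I(\alpha)\); every remaining triple has a positive box \(v\) and matches a triple of \(S'\), since a deleted box appearing as a neighbour \(u\) or \(w\) is simply replaced by the \(\pm\infty\) boundary convention without changing the increasing condition. Substituting \(q^{\dinv(P)}x^{\wt_+(P)}=q^{h_I(\alpha)}\,q^{h_{w_0}(S')}x^{\wt(S')}\), summing over \(P\), and factoring out \(q^{h_I(\alpha)}\) over each \(I\) yields the right‑hand side.
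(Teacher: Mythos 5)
Your first and third paragraphs are correct and essentially coincide with the paper's own argument: the column-to-row dictionary with the position/area correspondence $p=r+1$ is exactly the paper's bijection, and your analysis of the zero labels (each zero sits in the westmost box of its row, the rows with zeros form a set $I\subseteq[N-1]$, and the increasing $w_0$-triples whose box $v$ carries a zero number exactly $h_I(\alpha)$) is precisely the paper's Lemma~\ref{l:dropzeroontriples}. One small slip there: a triple whose box $w$ is a deleted zero is not ``replaced by the boundary convention'' --- it ceases to be a triple of the smaller shape altogether --- but this is harmless, since such a triple is never increasing (it would need $S(v)<S(w)=0$ with $v$ a box).

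The genuine gap is the middle step, the identity $\dinv(P)=h_{w_0}(S)$, and you flag it yourself: ``this global cancellation is the main obstacle.'' Your discrepancy bookkeeping is right --- for a fixed southern box $v$ at position $p$ and a fixed higher row $J$, the number of dinv pairs minus the number of increasing triples equals $+1$ if $\alpha_J<p<\beta_J$, equals $-1$ if $\alpha_J=\beta_J=p$, and equals $0$ otherwise, independently of the labels --- but asserting that the ribbon structure makes the totals cancel is not a proof; it is exactly the content of the result the paper cites at this point, namely \cite[Proposition 6.1.1]{paths}. So as written, your argument establishes the lemma only modulo an unproved combinatorial identity, which is the hard content. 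The missing step can be closed by a short crossing argument: fix $v$ at position $p$ in row $R$, recall from Definition~\ref{def:LLT-data} that $\alpha_J=c_{J+1}$ and $\beta_J=c_J+1$ (with $c_{N+1}:=0$), and read the heights $c_{N+1},c_N,\dots,c_{R+1}$ in path order. Row $J>R$ is interior for $p$ if and only if $c_{J+1}\le p-1$ and $c_J\ge p$, i.e.\ an up-crossing of the gap between $p-1$ and $p$; and since the Dyck condition gives $c_J\ge c_{J+1}-1$, every down-crossing must have exactly $c_{J+1}=p$ and $c_J=p-1$, i.e.\ be an empty row aligned at $p$. The sequence starts at $0\le p-1$ and ends at $c_{R+1}=\alpha_R\le p-1$ (because $v$ is a box of row $R$ at position $p$), both below the gap, so up-crossings and down-crossings are equinumerous --- for each box $v$ separately. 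With this inserted your proof is complete, and indeed more self-contained than the paper's, which outsources this identity to \cite{paths}.
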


\begin{proof}
There is a natural weight-preserving bijection mapping \(P \in
\LD_N(\lambda)\) to \(S \in \NSYT(\beta/\alpha,\NN)\), where the
labels of column \(x=i\) of \(P\), read north to south, are placed
into row \(N-i\) of \(\beta/\alpha\), west to east.  See
Figures~\ref{fig:PF} and \ref{fig:neg-tableau}. Moreover, $\dinv(P) =
h_{w_0}(S)$.  To see this, let $\hat P$ be the same labelling as $P$
but with the ordering on letters taken to be $0>1>2\cdots$.  It is
proven in~\cite[Proposition 6.1.1]{paths} that \(\dinv_1(\hat P) =
h_{w_0}(S)\), where $\dinv_1(\hat P)$ was introduced in~\cite{HHLRU}
and matches $\dinv(P)$ as discussed in
Remark~\ref{r:reconcilenotation}.  The bijection restricts to a
bijection from $\LD_{N,l}(\lambda)$ to the subset of tableaux $S \in
\NSYT(\beta/\alpha,\NN)$ with exactly $l$ 0's, none in row $N$.  This
gives
\begin{equation}\label{e:nozeros}
\sum_{P\in\LD_{N,l}(\lambda)} q^{\dinv(P)} x^{\wt_+(P)} =
\sum_{\substack{I\subseteq  [N-1]\\ |I|=l }}
\sum_{\substack{S\in\NSYT(\beta/\alpha, \NN)\\ 0\text{ in rows }i\in
I}} q^{h_{w_0}(S)} x^{\wt_+(S)} \,.
\end{equation}
The claim then follows from Definition~\ref{d:F} and the following Lemma.
\end{proof}

\begin{lemma}\label{l:dropzeroontriples}
For $\alpha ,\beta \in \NN^{N}$ and \(S \in \NSYT(\beta/\alpha,\NN)\),
let $I\subseteq [N]$ be the rows of $S$ containing a zero and let $T$ be
the tableau in \(\NSYT(\beta/(\alpha+\varepsilon_I),\ZZ_+)\) obtained
by deleting all zeros from $S$.  Then
\begin{equation}\label{eq:dropping-zeros-on-triples}
h_{w_0}(T) = h_{w_0}(S)-h_I(\alpha)\,,
\end{equation}
where $h_I(\alpha)$ is defined in~\eqref{e:hJeta}.
\end{lemma}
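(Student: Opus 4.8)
The plan is to parametrize each $w_0$-triple by its middle box together with the lower of its two rows, and then to pin down precisely which \emph{increasing} triples are destroyed when the zeros are deleted. I begin by recording the structure of the zeros: since $S$ is row strict and $0$ is the smallest label, each row contains at most one $0$, necessarily in its leftmost box. Thus $I$ is exactly the set of rows whose leftmost box holds a $0$, and passing from $S$ to $T$ deletes this box in each row $j\in I$, shifting $\alpha_j$ to $\alpha_j+1$ while fixing $\beta$ and leaving every remaining box in place with its label. In particular $T$ has shape $\beta/(\alpha+\varepsilon_I)$ and its boxes are exactly the nonzero boxes of $S$.

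Following the count in \eqref{def:triples}, I would describe a $w_0$-triple of a shape $\beta/\gamma$ by a pair $(v,j)$, where $v$ is an actual box in some row $r$ and $j>r$ is a lower row with $i_v\in[\gamma_j,\beta_j]$; then $u$ and $w$ are the cells (boxes, or cells adjacent to the ends of row $j$) at $x$-coordinates $i_v$ and $i_v+1$. Under this description, writing $\alpha'=\alpha+\varepsilon_I$, the triples of $T$ are exactly the triples $(v,j)$ of $S$ with $v$ nonzero and $i_v\geq \alpha'_j$; I will call these the \emph{common} triples. The triples of $S$ that are not common then fall into two types: (a) $v$ is a zero box, or (b) $v$ is nonzero, $j\in I$, and $i_v=\alpha_j$.

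The heart of the argument is that the increasing condition $S(u)<S(v)<S(w)$ is \emph{unchanged} on common triples. Comparing the two shapes cell by cell, the only cell whose value differs between $S$ and $T$ is the deleted zero of a row $j\in I$, which becomes the left-adjacent cell of row $j$ in $T$; in a common triple this cell can occur only as $u$, never as $v$ (which is nonzero and preserved) nor as $w$ (since $i_w=i_v+1$ lies strictly to the right of the deleted box). As the value there changes from $0$ to $-\infty$ while $S(v)>0$ in every common triple, the inequality $S(u)<S(v)$ holds in $S$ if and only if it holds in $T$, so $h_{w_0}(T)$ equals the number of increasing common triples. It remains to count the increasing non-common triples of $S$: a type (b) triple would force $S(v)<S(w)=0$, impossible for a nonzero $v$, so none is increasing; an increasing type (a) triple, with $v$ the zero of a row $r\in I$, forces $u$ to be left-adjacent, i.e.\ $\alpha_j=\alpha_r+1$, together with $S(w)>0$, i.e.\ $j\notin I$. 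These therefore correspond bijectively to pairs $r<j$ with $r\in I$, $j\notin I$, and $\alpha_j=\alpha_r+1$, numbering exactly $h_I(\alpha)$ by \eqref{e:hJeta}. Subtracting yields \eqref{eq:dropping-zeros-on-triples}.

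I expect the main obstacle to be the careful bookkeeping around the $\pm\infty$ conventions for cells adjacent to the ends of a row—in particular verifying that the deleted zero can play only the role of $u$ in a common triple, and checking that the empty-row cases (where both $u$ and $w$ are adjacent cells) are correctly absorbed into the type (a) count—rather than any conceptual difficulty.
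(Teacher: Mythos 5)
Your proof is correct and takes essentially the same approach as the paper's: both analyze how increasing $w_0$-triples change under deletion of the zeros, showing that increasing-ness is preserved except for triples whose middle box $v$ is a zero (with $u$ left-adjacent and $S(w)>0$), which are exactly counted by $h_I(\alpha)$. Your write-up is in fact more careful than the paper's terse argument---notably the explicit parametrization of triples by pairs $(v,j)$, the identification of triples of $T$ with the ``common'' triples of $S$, and the separate dismissal of the type (b) triples---but the underlying case analysis is the same.
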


\begin{proof}
Consider an increasing \(w_0\)-triple $(u,v,w)$ of $S$; the entries
satisfy $S(u)<S(v)<S(w)$, $v$ lies in some row $r$, and both $u$ and
$w$ lie in a row $j>r$.  When $r\not\in I$, either $j\not\in I$ so
that $(u,v,w)$ is an increasing \(w_0\)-triple of $T$ with the same
entries as $S$, or $j\in I$ and $S(u)=0$ changes to $T(u)=-\infty$
where still $(u,v,w)$ is an increasing \(w_0\)-triple of $T$.
However, if $r\in I$, $S(v)=0$ changes to $T(v)=-\infty$ and thus
$(u,v,w)$ is not an increasing \(w_0\)-triple of $T$.  Note the
increasing condition implies that this happens only when $j\not\in I$
and $\alpha_r=\alpha_j-1$ since $S(u)<0<S(w)$.
Thus~\eqref{eq:dropping-zeros-on-triples} follows.
\end{proof}

\begin{defn}
\label{d:betaalpha}
Given $\aA=(a_1,\ldots,a_{m-1})\in\NN^{m-1}$ and
$\tau = (\tau_1,\ldots,\tau_{m}) \in \NN^{m}$,
we define two sequences $\beta _{\aA \tau }$ and $\alpha
_{\aA \tau }$ of length $|\tau |+m$ as follows.

The sequence $\beta _{\aA \tau }$ is the concatenation of
sequences $(1,2,\ldots,\tau _{1}+1)$ and $(a_{i-1}+1, a_{i-1}+2,
\ldots, a_{i-1}+\tau _{i}+1)$ for $i=2,\ldots,m$.  The sequence
$\alpha _{\aA \tau }$ is the same as $\beta _{\aA \tau }$ except
in the positions corresponding to the ends of the concatenated
subsequences.  In these positions, we change the entries $\tau
_{1}+1, a_{1}+\tau _{2}+1 ,\ldots,a_{m-1}+\tau _{m}+1 $ in $\beta
_{\aA \tau }$ to $a_{1}, a_{2},\ldots, a_{m-1}, 0$.  
Equivalently, $\alpha _{\aA \tau }$ is the same as the sequence
obtained by subtracting $1$ from all entries of $\beta _{\aA \tau }$
and shifting one place to the left, deleting the first entry and
adding a zero at the end.
\end{defn}

\begin{example}\label{e:agamma}
For $\aA=(130012)$ and $\tau = (2311022)$,
\begin{equation}\label{e:alpha-beta-example}
\setlength{\arraycolsep}{2pt}
\begin{array}{rcccc@{\hskip 12pt}cccc@{\hskip 12pt}cc@{\hskip 12pt}cc@{\hskip 12pt}c@{\hskip 12pt}ccc@{\hskip 12pt}ccc}
(0,\aA)+(1^m)+\tau & = & (\phantom{1}&&{3}&&&&{5}&&{5}&&{2}&{1}&&&{4}&&&{5}) \\
\beta_{\aA\tau}
  & = &(1&2&3&2&3&4&5&4&5&1&2&1&2&3&4&3&4&5)\\
\alpha_{\aA\tau}
  &= &(1&2&1&2&3&4&3&4&0&1&0&1&2&3&2&3&4&0)\\
(\aA,0)
  & = &(\phantom{1}&&{1}&&&&{3}&&{0}&&{0}&{1}&&&{2}&&&{0})
\end{array}\,.
\end{equation}
The wider spaces show the division into blocks of size \(\tau_i+1\).
The last entry of \(\alpha_{\aA \tau}\) in each block is \(a_i\), and
the next block in $\alpha _{\aA \tau }$ and $\beta _{\aA \tau }$
starts with \(a_i+1\).
\end{example}

\begin{lemma}
For $0\leq l<m\le N$, 
\begin{multline}\label{l:alllabels}
\langle z^{N-m}\rangle\!\!\!  \sum_{\substack{\lambda\in \Dyck _{N}\\
P\in\LD_{N,l}(\lambda)}} t^{|\delta/\lambda|} \prod_{\substack{1 <i
\leq N \\ c_i(\lambda) = c_{i-1}(\lambda)+1}}
(1+z\,t^{-c_i(\lambda)})\, q^{\dinv(P)} x^{\wt_+(P)} \\
= \sum_{\substack{I\subseteq [N-1]\\ |I|=l}} \, \sum_{\substack{\tau, \,
(0,\aA)\, \in \, \NN^{m}\\ |\tau|=N-m}} t^{|\aA|}
q^{h_I(\alpha_{\aA\tau})}
N_{\beta_{\aA\tau}/(\alpha_{\aA\tau}+\varepsilon_I)}(X;q)\,.
\end{multline}
\end{lemma}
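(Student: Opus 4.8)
The plan is to reduce the left-hand side to a sum of $N_{\beta/\alpha}$'s using Lemma~\ref{cor:pld2negtab}, and then to identify the result with the right-hand side through a weight-preserving bijection between Dyck paths carrying a marked set of doubled east-steps and the index data $(\aA,\tau)$. First I would apply Lemma~\ref{cor:pld2negtab} to the innermost sum over partial labellings, replacing $\sum_{P}q^{\dinv(P)}x^{\wt_+(P)}$ by $\sum_{|I|=l}q^{h_I(\alpha)}N_{\beta/(\alpha+\varepsilon_I)}$, where $(\beta,\alpha)$ is the LLT data of $\lambda$ from Definition~\ref{def:LLT-data}. Expanding $\prod_{c_i=c_{i-1}+1}(1+z\,t^{-c_i})$ and extracting $\langle z^{N-m}\rangle$ turns the left-hand side into a sum over pairs $(\lambda,S)$, where $S$ is a size-$(N-m)$ subset of the columns $\{i : 1<i\le N,\ c_i(\lambda)=c_{i-1}(\lambda)+1\}$, carrying the weight $t^{|\delta/\lambda|}\prod_{i\in S}t^{-c_i(\lambda)}=t^{\sum_{i\notin S}c_i(\lambda)}$ (using $|\delta/\lambda|=\sum_i c_i(\lambda)$) times $\sum_{|I|=l}q^{h_I(\alpha)}N_{\beta/(\alpha+\varepsilon_I)}$.

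The heart of the argument is a bijection $(\lambda,S)\leftrightarrow(\aA,\tau)$. Given $(\lambda,S)$, I would cut the column-height sequence $(c_1(\lambda),\ldots,c_N(\lambda))$ at the $m-1$ gaps that are \emph{not} marked by $S$ (there are exactly $(N-1)-(N-m)=m-1$ such gaps), producing $m$ blocks. Since the gaps inside a block are marked and hence of the form $c_i=c_{i-1}+1$, each block is a run ascending by $1$; recording the length $\tau_i+1$ and the initial height $a_{i-1}$ of the $i$-th block (with $a_0=0$) yields $\tau\in\NN^m$ with $|\tau|=N-m$ and $\aA=(a_1,\ldots,a_{m-1})\in\NN^{m-1}$. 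Comparing with Definition~\ref{d:betaalpha}, this block data reproduces exactly $\beta_{\aA\tau}$ via $c_i=(\beta_{\aA\tau})_i-1$; and since both the LLT $\alpha$ and $\alpha_{\aA\tau}$ are recovered from $\beta$ by $\alpha_i=\beta_{i+1}-1$ for $i<N$ and $\alpha_N=0$, I obtain $\beta=\beta_{\aA\tau}$ and $\alpha=\alpha_{\aA\tau}$.

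The inverse map builds the $c$-sequence from the blocks and marks the within-block ascents; the only point to check is that it lands among Dyck paths. The Dyck condition $c_i\le c_{i-1}+1$ can fail only at a block boundary, where it reads $a_i\le a_{i-1}+\tau_i+1$, which is precisely the inequality $(\alpha_{\aA\tau})_j\le(\beta_{\aA\tau})_j$ at the block-end position $j$. Thus $(\aA,\tau)$ fails to come from a Dyck path exactly when $\beta_{\aA\tau}/\alpha_{\aA\tau}$ is not a valid skew shape, in which case $N_{\beta_{\aA\tau}/(\alpha_{\aA\tau}+\varepsilon_I)}=0$ for every $I$ by Definition~\ref{d:F}. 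Hence the superfluous $(\aA,\tau)$ on the right-hand side contribute nothing, and the bijection is onto those $(\aA,\tau)$ with a nonvanishing summand.

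Finally I would check that the weights agree. Since $S$ consists exactly of the non-initial columns of the blocks, its complement $[N]\setminus S$ is $\{1\}$ together with the initial columns of blocks $2,\ldots,m$, so $\sum_{i\notin S}c_i(\lambda)=c_1+\sum_{i=2}^{m}a_{i-1}=|\aA|$, matching the factor $t^{|\aA|}$ on the right. The $q$-weight and tableau factor match term by term because $\alpha=\alpha_{\aA\tau}$, $\beta=\beta_{\aA\tau}$, and the index set $\{I\subseteq[N-1]:|I|=l\}$ is the same on both sides, giving $q^{h_I(\alpha)}N_{\beta/(\alpha+\varepsilon_I)}=q^{h_I(\alpha_{\aA\tau})}N_{\beta_{\aA\tau}/(\alpha_{\aA\tau}+\varepsilon_I)}$. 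Summing over the bijection then identifies the two sides. I expect the main obstacle to be the bookkeeping in this bijection: pinning down the correspondence between marked and unmarked gaps and the block structure, tracking the translation between column indices and positions/rows of $\beta/\alpha$, and in particular making the identification of Dyck-validity with shape-validity clean enough that the extra $(\aA,\tau)$ terms are fully absorbed by the vanishing of $N_{\beta/\alpha}$.
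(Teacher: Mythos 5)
Your proposal is correct and takes essentially the same approach as the paper's proof: apply Lemma~\ref{cor:pld2negtab}, expand the product and extract $\langle z^{N-m}\rangle$, then re-index the pairs (path, marked ascent set) by the block data $(\aA,\tau)$ of Definition~\ref{d:betaalpha}, with the non-Dyck $(\aA,\tau)$ contributing nothing because $(\alpha+\varepsilon_I)_j\geq\alpha_j>\beta_j$ forces $N_{\beta/(\alpha+\varepsilon_I)}=0$. The paper organizes this as dropping the conditions $c_j\le c_{j-1}+1$ for $j\in J=[N]\setminus A$ and then re-indexing $J$ by $\tau$ and $\mathbf{c}$ by $\aA$, rather than as an explicit bijection plus vanishing of superfluous terms, but the content is identical.
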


\begin{proof}
Use Lemma~\ref{cor:pld2negtab} to rewrite the left hand side of
\eqref{l:alllabels} as
\begin{equation}\label{e:labenL}
\langle z^{N-m}\rangle \sum_{\lambda\in\Dyck_{N}} t^{|\delta/\lambda|}
\prod_{\substack{1 <i \leq N \\ c_i(\lambda) = c_{i-1}(\lambda)+1}}
(1+z\,t^{-c_i(\lambda)}) \sum_{\substack{I\subseteq [N-1]\\ |I|=l}}
q^{h_I(\alpha)}N_{\beta/(\alpha+\varepsilon_I)}
\end{equation}
where $\beta= (1^{N})+(0,c_2(\lambda),\ldots,c_{N}(\lambda))$, $\alpha
= (c_2(\lambda),\ldots,c_{N}(\lambda),0)$ are the LLT data for
$\lambda$.  Note that a tuple $\mathbf c=(c_1,c_2,\ldots,c_{N}) \in
\NN^N$ is the sequence of column heights $c_{i}(\lambda )$ of
a path $\lambda\in\Dyck_{N}$ if and only if $c_s\leq
c_{s-1}+1$ for all $s>1$ and $c_1=0$; in this case,
$|\delta/\lambda|=|\mathbf c|$.  Replace $\Dyck_{N}$
in~\eqref{e:labenL} by these tuples, and expand the product to obtain
\begin{multline}\label{e:almost}
\langle z^{N-m}\rangle
\sum_{A\subseteq [N]\setminus \{1\}}\,\,
\sum_{\substack{c_i \leq c_{i-1}+1 \ \forall i  \\
c_i = c_{i-1}+1 \ \forall i \in A}} \!\!\!  t^{|\mathbf
c|-\sum_{i\in A}c_i} \, z^{|A|} \sum _{\substack{I\subseteq [N-1]\\
|I|=l}}
q^{h_I(\alpha)}\,N_{\beta/(\alpha+\varepsilon_I)}\\
= \sum_{\substack{\{1\} \subseteq J\subseteq [N] \\ |J| = m}} \,\,
\sum_{c_j = c_{j-1}+1 \ \forall j \notin J } \!\! t^{\sum_{j\in J}c_j}
\sum_{\substack{I\subseteq [N-1]\\ |I|=l}}
q^{h_I(\alpha)}\,N_{\beta/(\alpha+\varepsilon_I)}\,,
\end{multline}
where the equality comes from re-indexing with $J = [N]\setminus A$ and
noting that we can drop the condition $c_j \leq c_{j-1}+1 \ \forall
j\in J$ because $N_{\beta/(\alpha+\varepsilon_I)}=0$ if any
$(\alpha+\varepsilon_I)_j\geq \alpha_j>\beta_j$.

If we replace the sum over $J$ by a sum over $\{\tau\in\NN^m :
|\tau|=N-m\}$ using $J=
\{1,\tau_1+2,\tau_1+\tau_2+3,\ldots,\tau_1+\cdots+\tau_{m-1}+m\}$,
then, for fixed $J$ (or fixed $\tau$), the sum over $\mathbf{c}$ can
be replaced by a sum over
\begin{equation}\label{e:c-vector}
\mathbf c =
(0,1,2,\ldots,\tau_1,a_1,a_1+1,\ldots,a_1+\tau_2,a_2,\ldots,
a_{m-1}+\tau_{m})
\end{equation}
for $\mathbf a$ ranging over $\NN^{m-1}$.  Note that $\sum_{j\in J}
c_j = |\aA|$.  With this encoding of $\mathbf{c}$, we have
$\beta/\alpha = \beta_{\aA\tau}/\alpha_{\aA\tau}$ in the notation of
Definition \ref{d:betaalpha}, and \eqref{e:almost} becomes the right
hand side of \eqref{l:alllabels}.
\end{proof}

We make a final adjustment to the right hand side
of~\eqref{l:alllabels}.  This sum runs over tuples
$\beta_{\aA\tau}/(\alpha_{\aA\tau}+\varepsilon_I)$ with $|\tau|$
necessarily empty rows which can be removed at the cost of a $q$
factor. We introduce some notation depending on a given
$\aA\in\NN^{m-1}$, $\tau=(\tau_1,\ldots,\tau_{m})\in\NN^{m}$, and the
associated $\beta_{\aA\tau}/\alpha_{\aA\tau}$ from
Definition~\ref{d:betaalpha}.  First we set
$j_{\uparrow}=j+\sum_{x\leq j}\tau_x$ for $j\in [m]$, so the entry of
$\beta _{\aA \tau }$ in position $j_{\uparrow }$ is $a_{j-1}+\tau
_{j}+1$, or $\tau _{1}+1$ if $j = 1$, and the entry of $\alpha _{\aA
\tau }$ in the same position is $a_{j}$, or $0$ if $j = m$.  For a
subset $J\subseteq [m]$, we set $J_{\uparrow} = \{j_{\uparrow}: j\in
J\}$.  In positions $i\not\in[m]_{\uparrow}$, the sequences $\beta
_{\aA \tau }$ and $ \alpha _{\aA \tau }$ agree, so row $i$ is empty in
$\beta_{\aA\tau} /\alpha_{\aA\tau} $.  The tuple of row shapes
obtained by deleting these empty rows from $\beta_{\aA
\tau}/\alpha_{\aA \tau}$ is $((0,\aA)+(1^m)+\tau)/(\aA,0)$, where row
$j\in[m]$ corresponds to row $j_{\uparrow}$ of $\beta_{\aA
\tau}/\alpha_{\aA \tau}$; note that rows \((j-1)_\uparrow\) and
\(j_\uparrow\) are separated by \(\tau_j\) empty rows. See
Figure~\ref{f:betaalpha}.

\begin{figure}
  \centering
  \begin{tikzpicture}[scale=.30]
    \newcommand{\drawRowpp}[3]{
    \def\r{#1};
    \def\a{#2};
    \def\b{#3};
    \draw [shift={(-20,1.5*\r)}] (\b+2,1) grid (\a+2,0);
  };
  \node[right] at (-22.9,17) 
{$((0,\aA)+(1^m)+\tau)/(\aA,0)$};
  \node[left] at (-20,6.5) {\footnotesize\(r\)};
  \drawRowpp{4}{3}{6}
  \node[left] at (-20,5) {\footnotesize\(r-1\)};
  \drawRowpp{3}{0}{8}
  \node at (-15,3.5) {\tiny\vdots};
  \node[left] at (-20,1.25) {\footnotesize\(j\)};
  \drawRowpp{.75}{2}{6}
    \newcounter{cp};
    \newcommand{\drawRowp}[4]{
    \def\r{#1};
    \def\a{#2};
    \def\b{#3};
    \def\labels{#4};
    \draw [shift={(0,1.5*\r)}] (\b+2,1) grid (\a+2,0);
    \setcounter{cp}{\a+1};
    \foreach \nn in \labels {\node at
      (\thecp+1.5,1.5*\r+.5) {
        \footnotesize{\nn}};
      \addtocounter{cp}{1};
    }
  };
  \node[right] at (2,17)
  {\(\beta_{\aA\tau} / \alpha_{\aA\tau}\)};
  \node[left] at (0,14) {\footnotesize\(r_\uparrow\)};
 \drawRowp{9}{3}{6}{}
  \drawRowp{8}{5}{5}{}
  \node[left] at (0,10) {\footnotesize\(y\)};
  \drawRowp{7}{4}{4}{}
  \drawRowp{6}{3}{3}{}
  \drawRowp{5}{2}{2}{}
  \drawRowp{4}{1}{1}{}
  \node[left] at (0,5) {\footnotesize\((r-1)_\uparrow\)};
  \drawRowp{3}{0}{8}{}
  \node at (5,3.5) {\tiny\vdots};
  \node[left] at (0,1.25) {\footnotesize\(j_\uparrow\)};
  \drawRowp{.75}{2}{6}{}

  \draw [decorate,decoration={brace,amplitude=4pt}] (11,13.25) --
(11,5.75) node [midway, right,xshift=0.1cm] {\(\tau_r\)};
\end{tikzpicture}
\caption{\label{f:betaalpha}
Comparing the tuples of rows $\beta_{\aA \tau}/\alpha_{\aA \tau}$ and
$((0,\aA)+(1^m)+\tau)/(\aA,0)$ for \(\aA \in \NN^{m-1}\) and \(\tau
\in \NN^m\).  Here $a_j=2$, \(a_{r-1} = 0, a_r = 3\), and
\(\tau_r = 5\).}
\end{figure}

\begin{lemma}
\label{l:GbigisGsmall}
For $J\subseteq [m]$, $\aA\in\NN^{m-1}$ and $\tau\in\NN^m$, 
let $I=J_{\uparrow}$. Then
\begin{equation}
\label{e:pain} N_{\beta_{\aA\tau}/(\alpha_{\aA\tau}+\varepsilon_I)} =
q^{d((0,\aA),\tau)-h'_J(\aA,\tau)}
N_{((0,\aA)+(1^m)+\tau)/((\aA,0)+\varepsilon_J)}\,,
\end{equation}
where $h'_J(\aA,\tau)=\left|\{(j<r): j\in J, r\in[m],a_j\in
[a_{r-1},a_{r-1}+\tau_r-1]\}\right|$ with $a_0=0$, and
$d((0,\aA),\tau)$ is defined by \eqref{e:detagamma}.
\end{lemma}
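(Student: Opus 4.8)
The plan is to realize the right-hand shape $((0,\aA)+(1^m)+\tau)/((\aA,0)+\varepsilon_J)$ as the tuple obtained from $\beta_{\aA\tau}/(\alpha_{\aA\tau}+\varepsilon_I)$ by deleting its empty rows, and to show that this deletion rescales $N$ by a fixed power of $q$. As recorded in the discussion preceding the lemma, the rows of $\beta_{\aA\tau}/\alpha_{\aA\tau}$ in positions $i\notin[m]_\uparrow$ are empty; since $I=J_\uparrow\subseteq[m]_\uparrow$, adding $\varepsilon_I$ leaves these rows empty, and forgetting them identifies the kept row $j_\uparrow$ with row $j$ of the small shape for $j\in[m]$. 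First I would record the resulting bijection $S\mapsto T$ from $\NSYT(\beta_{\aA\tau}/(\alpha_{\aA\tau}+\varepsilon_I),\ZZ_+)$ to $\NSYT(((0,\aA)+(1^m)+\tau)/((\aA,0)+\varepsilon_J),\ZZ_+)$; it is weight-preserving because empty rows carry no boxes. (When some row has $\alpha_j>\beta_j$ both $N$'s vanish by Definition~\ref{d:F}, so I may assume all shapes are valid.)

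The heart of the reduction is that $h_{w_0}(S)-h_{w_0}(T)$ is a constant $\mathcal D$ independent of $S$. Indeed, the lower box $v$ of any $w_0$-triple is genuine, hence lies in a nonempty row, which is necessarily a kept row; so a triple is lost under the bijection only when its \emph{upper} row is a deleted empty row $e$. For such a triple both $u$ and $w$ are the phantom boxes of $e$, giving $S(u)=-\infty$ and $S(w)=+\infty$, so the triple is increasing in every $S$. All other triples have their upper row kept and are preserved with the same entries and the same phantom conventions, hence increasing in $S$ iff in $T$. Therefore $h_{w_0}(S)=h_{w_0}(T)+\mathcal D$, where $\mathcal D$ is the number of $w_0$-triples of $\beta_{\aA\tau}/(\alpha_{\aA\tau}+\varepsilon_I)$ with empty upper row, and summing $q^{h_{w_0}(S)}x^{\wt(S)}$ over the bijection yields $N_{\beta_{\aA\tau}/(\alpha_{\aA\tau}+\varepsilon_I)}=q^{\mathcal D}N_{((0,\aA)+(1^m)+\tau)/((\aA,0)+\varepsilon_J)}$. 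It remains to prove $\mathcal D=d((0,\aA),\tau)-h'_J(\aA,\tau)$.

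To compute $\mathcal D$, set $a_0=0$ and $\eta=(0,\aA)$, so $\eta_j=a_{j-1}$. The deleted empty rows in block $r$ have $x$-values filling $[a_{r-1}+1,a_{r-1}+\tau_r]$, the boxes of the kept row $j_\uparrow$ occupy $x$-coordinates $[a_j+[j\in J]+1,\,a_{j-1}+\tau_j+1]$, and a box of $j_\uparrow$ lies below an empty row of block $r$ exactly when $j<r$. Since an empty row of value $c$ and a box $v$ with $i_v=c$ determine a unique $w_0$-triple, shifting both index sets down by one gives
\[
\mathcal D=\sum_{1\le j<r\le m}\bigl|\,[\,a_j+[j\in J],\,a_{j-1}+\tau_j\,]\cap[\,a_{r-1},\,a_{r-1}+\tau_r-1\,]\,\bigr|.
\]
I would then split $\mathcal D=d-h'_J$ into two steps. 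For the shift, a term with $j\in J$ differs from its $J=\varnothing$ counterpart by removing the single value $c=a_j$, which contributes precisely when $a_j\le a_{j-1}+\tau_j$ (automatic under validity of the shifted row $j_\uparrow$) and $a_j\in[a_{r-1},a_{r-1}+\tau_r-1]$; summing over $j\in J$ and $r>j$ recovers $h'_J(\aA,\tau)$. For the remaining comparison of the $J=\varnothing$ count with $d$, only the left endpoint of the $j$-th interval changes, from $\eta_j=a_{j-1}$ in $d$ to $\eta_{j+1}=a_j$. The main obstacle is that this comparison fails term by term; instead I would fix $r$ and a value $c\in[a_{r-1},a_{r-1}+\tau_r-1]$ and sum the signed contributions over $j<r$. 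Validity forces $c\le a_{j-1}+\tau_j$ wherever $[\eta_j\le c]\neq[\eta_{j+1}\le c]$, so the upper-bound constraint drops out and the sum collapses by telescoping to $[\eta_1\le c]-[\eta_r\le c]$; as $c\ge 0=\eta_1$ and $c\ge a_{r-1}=\eta_r$, both indicators equal $1$ and the total vanishes. Hence the $J=\varnothing$ count equals $d((0,\aA),\tau)$, and $\mathcal D=d-h'_J$ as required.
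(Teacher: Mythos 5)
Your proof is correct and follows essentially the same route as the paper's: both arguments reduce the identity to counting the $w_0$-triples involving the deleted empty rows (noting these are increasing in every tableau, so they contribute a constant power of $q$), arrive at the same count $\sum_{1\leq j<r\leq m}\bigl|[a_j+(\varepsilon_J)_j,\,a_{j-1}+\tau_j]\cap[a_{r-1},\,a_{r-1}+\tau_r-1]\bigr|$, split off $h'_J(\aA,\tau)$ using the validity assumption, and identify the remaining sum with $d((0,\aA),\tau)$. The only difference is cosmetic and lies in the last step: the paper rewrites each intersection as a difference of half-infinite intervals and swaps the endpoint $a_{r-1}$ for $a_0=0$, whereas you telescope the indicator differences pointwise in the value $c$ --- the same computation organized per value rather than per interval.
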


\begin{proof}
Set $a_0=0$.  We can assume \(a_j+(\varepsilon_J)_j \leq
a_{j-1}+\tau_j+1\) for all $j\in[m]$ since otherwise both sides
of~\eqref{e:pain} vanish by Definition~\ref{d:F}.  Hence, each side is
a \(q\)-generating function for row strict tableaux on tuples of
single row skew shapes; rows of
$\beta_{\aA\tau}/(\alpha_{\aA\tau}+\varepsilon_I)$ on the left hand
side differ from the right hand side only by the removal of empty rows
$r\not\in[m]_{\uparrow}$.  Thus, the two sides agree up to a factor
$q^{d}$, where $d$ counts $w_0$-triples of
$\beta_{\aA\tau}/(\alpha_{\aA\tau}+\varepsilon_I)$ involving one of
these empty rows.

To evaluate $d$, consider such an empty row $(b)/(b)$, coming from
\(b\in\{a_{r-1}+1,\ldots,a_{r-1}+\tau_r\}\) for some $r\in[m]$.  The
adjacent boxes on the left and right of this empty row form a
$w_{0}$-triple, increasing in every tableau, with one box in each
non-empty lower row $j_{\uparrow}$, of the form
$(a_{j-1}+\tau_j+1)/(a_j+(\varepsilon_J)_j)$, such that $b\in
[a_j+(\varepsilon_J)_j +1,a_{j-1}+\tau_j+1]$.  Hence,
\begin{multline*} 
d= \sum_{1\leq j<r\leq m} \big| [a_j+(\varepsilon_J)_j
,a_{j-1}+\tau_j] \cap [a_{r-1},a_{r-1}+\tau_r-1] \big|
\\
= \sum_{1\leq j<r\leq m} \big| [a_j,a_{j-1}+\tau_j] \cap
[a_{r-1},a_{r-1}+\tau_r-1] \big| - \sum_{\substack{1\leq j<r\leq m
\\ j\in J}} \big|\{a_j\}\cap [a_{r-1},a_{r-1}+\tau_r-1] \big|.
\end{multline*}
The sum after the minus sign is $h_J'(\aA,\tau)$.
To prove that the remaining sum is \(d((0,\aA),\tau)\),
first rewrite it as
\begin{equation}\label{e:intervals intersection}
\! \sum_{1\leq j<r \leq m} \!\!\! \left( \big| [a_j,\infty) \cap
[a_{r-1},a_{r-1}+\tau_r-1] \big|- \big| [a_{j-1}+\tau_j+1,\infty)
\cap [a_{r-1},a_{r-1}+\tau_r-1] \big|\right),\!
\end{equation}
using the fact that $a_{j}\leq a_{j-1}+\tau _{j}+1$ by
assumption.  Next observe that since $a_{0} = 0\leq a_{r-1}$,
\begin{equation*}
\big| [a_{r-1},\infty) \cap [a_{r-1},a_{r-1}+\tau_r-1] \big| =
\big| [a_0,\infty) \cap [a_{r-1},a_{r-1}+\tau_r-1] \big|.
\end{equation*}
Adding $\sum_{1<j<r} \big| [a_{j-1},\infty) \cap
[a_{r-1},a_{r-1}+\tau_r-1] \big|$ to both sides, it follows that
\begin{equation*}
\sum_{1\leq j<r} \big| [a_{j},\infty) \cap
[a_{r-1},a_{r-1}+\tau_r-1] \big| = \sum_{1\leq j<r} \big|
[a_{j-1},\infty) \cap [a_{r-1},a_{r-1}+\tau_r-1] \big|.
\end{equation*}
Hence \eqref{e:intervals intersection} is unchanged upon replacing
$[a_j,\infty)$ with $[a_{j-1},\infty)$ and is thus equal to
\begin{equation*}
\sum_{1\leq j<r\leq m} \big|[a_{j-1},a_{j-1}+\tau_j]\cap
[a_{r-1},a_{r-1}+\tau_r-1]\big| = d((0,\aA),\tau).
\end{equation*}
\end{proof}

\begin{proof}[Proof of Theorem~\ref{t:rhs_reformulation}] Consider a
summand $t^{|\aA|} q^{h_I(\alpha_{\aA\tau})}
N_{\beta_{\aA\tau}/(\alpha_{\aA\tau}+\varepsilon_I)}$ on the right
hand side of identity~\eqref{l:alllabels} for $I\subseteq [N-1]$,
$\aA\in\NN^{m-1}$, $\tau\in\NN^m$.  It vanishes unless $I=J_\uparrow$
for some $J\subseteq [m-1]$ since $N_{\beta/(\alpha+\varepsilon_I)}=0$
when $(\alpha+\varepsilon_I)_i>\beta_i$ for some index $i$. 

For
$I=J_{\uparrow}$, we can replace the summand with $t^{|\aA|}
q^{d((0,\aA),\tau) + h_{I}(\alpha_{\aA\tau}) - h'_J(\aA,\tau)}
N_{((0,\aA) + (1^{m})+\tau) / ((\aA,0)+\varepsilon_J)}$, by
Lemma~\ref{l:GbigisGsmall}.  It now suffices to prove that for
$\alpha=\alpha_{\aA\tau}$,
\begin{equation}
h_I(\alpha) = 
h'_J(\aA,\tau)+ h_J(\aA) \,.
\end{equation}
We recall that $N=m_\uparrow$ and note that $[N]\setminus I =([N]\setminus
[m]_{\uparrow} )\sqcup ([m]_\uparrow\setminus I) =([N]\setminus
[m]_{\uparrow}) \sqcup ([m]\setminus J)_{\uparrow}$.
Hence,
$h_I(\alpha)=\left| \{(x<y): x\in I, y\in[N]\setminus
I,\alpha_y=\alpha_x+1\} \right| =\left|S_1\right|+\left|S_2\right|$
for
\begin{align*}
S_1 & =  \{(x<y): x\in J_\uparrow,\, 
y \in [N]\setminus [m]_\uparrow,\, \alpha_y=\alpha_x+1\}\, ,\\
S_2 & = \{(x<y): x\in J_\uparrow,\, y\in ([m]\setminus J)_\uparrow,\,
\alpha_y=\alpha_x+1\}\,.
\end{align*}
Since $\alpha_{m_\uparrow}=0$ implies $(x<m_\uparrow)\not\in S_2$
for all $x<m_\uparrow$, we use that $a_u=\alpha_{u_\uparrow}$ for
every $u\in[m-1]$ to see that
\begin{equation}
h_J(\aA)= \big|S_2\big| = \big| \{(j<r): j\in J,\,  r\in [m-1]\setminus
J,\, a_r=a_j+1\} \big|\,.
\end{equation}
Furthermore, $\{(j<r):j\in J, r\in[m],a_{r-1}+1\leq a_j+1\leq
a_{r-1}+\tau_r\}$ and $S_1$ are equinumerous,
as we can see by letting a pair $(j<r)$ in the first set correspond to
the pair $(j_\uparrow<y)$ in $S_{1}$, where $y$ is the unique row
index in the range $(r-1)_\uparrow<y<r_\uparrow$
such that $\alpha_{y} = \alpha_{j_\uparrow} +1 = a_j+1$, as
illustrated in Figure~\ref{f:betaalpha}.
\end{proof}

\section{Stable unstraightened extended delta theorem}
\label{s:unstraightened}
\subsection{}

By Theorems~\ref{t:ourLHS} and~\ref{t:rhs_reformulation},
the {\bf Extended Delta Conjecture is equivalent to}
\begin{multline}\label{e:infseriesidentity}
\Hbold^{m}_q \left(
  \frac{\prod _{i+1<j\leq m} (1 - q\, t\, 
    x_i/x_j)}{\prod _{i< j\leq m} (1 - t\,
    x_i/x_j)} 
x_1\cdots x_{m} h_{N-m}(x_1,\ldots,x_{m}) \overline{e_l(x_2,\ldots,x_{m})}
\right)_{\rm pol}
\\
=
\sum_{\substack{J \subseteq [m-1]\\ |J|=l}} \ 
\sum_{\substack{(0,\aA),\tau\in \mathbb N^{m} \\ |\tau|=N-m}}
t^{|\aA|} q^{d((0,\aA),\tau)+h_J(\aA)}
\; \left(\omega N_{\beta/\alpha}\right)(x_1,\ldots,x_{m};q)
\,,
\end{multline}
where $\beta = (0,\aA)+(1^{m})+\tau$, $\alpha=(\aA,0)+\varepsilon_J$, 
and $\left(\omega N_{\beta/\alpha}\right)(x_1,\ldots,x_{m};q)$ is
$\omega N_{\beta/\alpha}(X;q)$ evaluated in $m$ variables.

Although this is an identity in only $m$ variables, it does amount to
the Extended Delta Conjecture by Remarks~\ref{r:sfsfinitesuffices}
and~\ref{r:Nfinitesuffices}: both $\omega ( h_l[B]e_{m-l-1}[B-1]
e_{N-l})$ and $\omega N_{\beta/\alpha}(X;q)$ for the $\alpha,\beta$
arising in~\eqref{e:infseriesidentity} are linear combinations of
Schur functions $s_\lambda$ with $\ell(\lambda)\leq m$.

We will show in Proposition~\ref{prop:llts2G} that the functions
$\omega N_{\beta/\alpha}$ on the right hand side
of~\eqref{e:infseriesidentity} are the polynomial parts of `LLT
series' introduced in~\cite{GrojHaim07}, making each side
of~\eqref{e:infseriesidentity} the polynomial part of an infinite
series of $\GL_m$ characters.  We then
prove~\eqref{e:infseriesidentity} as a consequence of a stronger
identity between these infinite series.

Hereafter, we use $x$ to abbreviate the alphabet $x_1,\ldots,x_m$.
\subsection{LLT series}
\label{S:LLTseries}

We will work with the (twisted) non-symmetric Hall-Littlewood
polynomials as in \cite{paths}.  For a $\GL _{m}$ weight $\lambda \in
\ZZ ^{m}$ and \(\sigma \in S_m\), the twisted non-symmetric
Hall-Littlewood polynomial $E^\sigma_\lambda(x;q)$ is an element of
$\ZZ[q^{\pm 1}][x_1^{\pm 1}, \dots, x_m^{\pm 1}]$ defined using an
action of the Hecke algebra on this ring; we refer the reader to
\cite[\S 4.3]{paths} for the precise definition, citing properties as
needed.  We also have their variants
\begin{equation}\label{eq:defEsigma}
F^\sigma_\lambda(x;q) = \overline{E^{\sigma w_0}_{-\lambda}(x;q)}\,,
\end{equation}
recalling that
$\overline{f(x_1,\ldots,x_m;q)}=f(x_1^{-1},\ldots,x_m^{-1};q^{-1})$.

For any weights $\alpha ,\beta \in \ZZ ^{m}$ and a permutation $\sigma
\in S_{m}$, the {\em LLT series} $\Lcal^{\sigma } _{\beta /\alpha
}(x;q) =\Lcal^{\sigma } _{\beta /\alpha }(x_{1},\ldots,x_{m};q)$ is
defined in \cite[\S4.4]{paths} by
\begin{equation}\label{e:seriesLLTdef}
\langle \chi_\lambda \rangle
\Lcal^{\sigma^{-1}}_{\beta/\alpha}(x;q^{-1}) = \langle E^\sigma_\beta
\rangle \,\chi_\lambda \cdot E^\sigma_\alpha\,.
\end{equation}
Alternatively,~\cite[Proposition 4.4.2]{paths} gives the following
expression in terms of the Hall-Littlewood symmetrization operator
in~\eqref{e:Hq}:
\begin{equation}
\label{e:L-formula}
\Lcal ^{\sigma }_{\beta /\alpha }(x;q) = \Hbold^{m}_q(w_{0}(F^{\sigma
^{-1} }_{\beta }(x;q) \overline{E^{\sigma ^{-1}}_{\alpha }(x;q)}))\,,
\end{equation}
where $w_0$ denotes the permutation of maximum length here and after.
We will only need the LLT series for $\sigma=w_0$ and $\sigma = id$,
although most of what follows can be generalized to any $\sigma$.

In addition to the above formulas, we have the following combinatorial
expressions for the polynomial truncations of LLT series as tableau
generating functions with $q$ weights that count triples.  As usual, a
\emph{semistandard tableau} on a tuple of skew row shapes
$\nu=\beta/\alpha$ is a map \(T \colon \nu \to [m]\) which is weakly
increasing on rows.  Let \(\SSYT(\nu)\) denote the set of these, and
define $x^{\wt(T)}=\prod_{b\in \nu}x_{T(b)}$.

\begin{prop}[{\cite[Remark 4.5.5 and Corollary 4.5.7]{paths}}]\label{prop:tableaux-form-for-llts}
If \(\alpha_i \leq \beta_i\) for all \(i\), then
\begin{equation}\label{eq:tableaux-for-Lcal-pol}
\Lcal^{w_0}_{\beta/\alpha}(x;q)_{\pol} = \sum_{T \in
\SSYT(\beta/\alpha)} q^{h'_{w_0}(T)} x^{\wt(T)} \,,
\end{equation}
where \(h'_{w_0}(T)\) is the number of \(w_0\)-triples \((u,v,w)\) of
\(\beta/\alpha\) such that \(T(u) \leq T(v) \leq T(w)\).
\end{prop}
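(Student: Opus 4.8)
The plan is to compute $\Lcal^{w_0}_{\beta/\alpha}(x;q)_{\pol}$ directly from the symmetrization formula \eqref{e:L-formula}. Since $w_0$ is an involution, taking $\sigma = w_0$ there gives $\Lcal^{w_0}_{\beta/\alpha}(x;q) = \Hbold^m_q\bigl(w_0(F^{w_0}_\beta(x;q)\,\overline{E^{w_0}_\alpha(x;q)})\bigr)$. The strategy is to (i) expand the twisted non-symmetric Hall--Littlewood factors $F^{w_0}_\beta$ and $E^{w_0}_\alpha$ into monomials with explicit $q$-weights, (ii) apply $\Hbold^m_q$ monomial by monomial and take the polynomial part, and (iii) check that the surviving terms are indexed by semistandard fillings $T \in \SSYT(\beta/\alpha)$ and that the accumulated power of $q$ is exactly $h'_{w_0}(T)$.

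For step (i) I would use the fact that $\beta/\alpha$ is a tuple of single-row skew shapes, so under the concatenation product the factors decompose into one-row pieces; for a single row the relevant $E$ and $F$ polynomials have explicit monomial expansions coming from the Hecke-algebra (Demazure--Lusztig) recursion in \cite[\S4.3]{paths}, and the product over rows assembles these. For step (ii) I would feed the resulting Laurent polynomial into $\Hbold^m_q$, interpreting $\prod_{i<j}1/(1 - q\,x_i/x_j)$ as a raising-operator series as in \eqref{e:Hq}; applying the Weyl symmetrization $\sigmabold$ and discarding the non-polynomial weights should leave one contribution for each assignment of entries in $[m]$ to the boxes of $\beta/\alpha$ that is weakly increasing along each row, that is, for each $T \in \SSYT(\beta/\alpha)$, carrying the monomial $x^{\wt(T)}$.

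The real work, and the main obstacle, is step (iii): matching the $q$-statistic. Here the power of $q$ attached to each filling is the sum of the intrinsic $q$-weight carried by the monomial expansion of $F^{w_0}_\beta\,\overline{E^{w_0}_\alpha}$ and the inversion weight generated by the geometric-series factors of $\Hbold^m_q$. I would prove that this total equals the number of weakly increasing $w_0$-triples $h'_{w_0}(T)$ by an induction that adds one box at a time to $\beta/\alpha$: extending some $\beta_k$ by one creates exactly the new $w_0$-triples in which the added box plays the role of $u$, $v$, or $w$, and I would verify that the change in the symmetrization weight together with the change in the intrinsic weight accounts for precisely those new triples whose entries are weakly increasing. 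An alternative route, which I would fall back on if the direct bookkeeping becomes unwieldy, is to use the defining identity \eqref{e:seriesLLTdef} with $\sigma = w_0$ together with a Pieri-type expansion of $\chi_\lambda \cdot E^{w_0}_\alpha$ in the $E^{w_0}$-basis, building the tableau sum recursively and matching the triple statistic box by box.
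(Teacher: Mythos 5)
First, a point of reference: the paper does not prove this proposition at all --- it is quoted verbatim from \cite[Remark 4.5.5 and Corollary 4.5.7]{paths} --- so your attempt has to be judged as a self-contained reconstruction of that earlier argument, and it is not one: it contains a concrete error and defers the essential step. The error is in step (i). You assert that because $\beta/\alpha$ is a tuple of single-row skew shapes, the factors $F^{w_0}_{\beta}$ and $\overline{E^{w_0}_{\alpha}}$ ``decompose into one-row pieces'' under the concatenation product. The concatenation product~\eqref{e:concatenation-product} is the multiplication of the tensor algebra underlying the shuffle algebra; it has nothing to do with non-symmetric Hall--Littlewood polynomials. The polynomials $E^{w_0}_{\alpha}(x_1,\ldots,x_m;q)$ and $F^{w_0}_{\beta}(x_1,\ldots,x_m;q)$ are single polynomials in all $m$ variables indexed by the full weight vectors $\alpha,\beta\in\ZZ^m$, and they do not factor as products over the entries of those vectors; the only factorizations available are special ones such as \cite[Lemma 4.3.4]{paths}, used in Lemma~\ref{l:xtimesF}, which require trailing zeros or a split into dominant blocks. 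So the ``explicit monomial expansion'' that your steps (ii) and (iii) are supposed to process does not exist in the form you describe.

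Second, even granting some monomial expansion, step (iii) --- which you yourself identify as the real work --- is the entire content of the proposition, and neither mechanism you sketch is viable as stated. The one-box-at-a-time induction has no evident inductive step: passing from $\beta$ to $\beta+\varepsilon_k$ changes $F^{w_0}_{\beta}$ in a way not governed by any recursion in the paper's toolkit (multiplication by a single variable $x_k$ does not interact simply with the Hecke action defining $F^{w_0}$, except at distinguished positions), and the Pieri-type rules that do exist, namely \eqref{Epieri} and Lemma~\ref{l:hFPieri}, add a whole horizontal strip spread across all rows at once, with weights $h_I(\aA)$ and $d(\eta,\tau)$, not one box. Your fallback route --- expanding $\chi_\lambda\cdot E^{w_0}_{\alpha}$ in the $E^{w_0}$ basis via \eqref{e:seriesLLTdef} --- is in fact the direction in which \cite{paths} actually works, but as written it merely restates what must be proved rather than proving it. As it stands, the proposal obtains the monomials $x^{\wt(T)}$ only heuristically and the statistic $h'_{w_0}(T)$ not at all; filling either gap would amount to redoing the cited argument of \cite{paths}.
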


\begin{prop}[{\cite[Proposition 4.5.2]{paths}}]
\label{prop:llts2G}
For any  $\alpha, \beta \in \ZZ^m$, 
\begin{equation}\label{e:llts2G}
\Lcal^{w_0}_{\beta/\alpha}(x;q)_{\pol} = \left(\omega
N_{\beta/\alpha}\right)(x;q) \,.
\end{equation}
\end{prop}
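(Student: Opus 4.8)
The plan is to reduce the statement to a combinatorial $\omega$-duality between the two tableau generating functions already in hand, and then to establish that duality by a signed-alphabet (superization) argument. First I would combine the two combinatorial descriptions. By Proposition~\ref{prop:tableaux-form-for-llts}, whenever $\alpha_i\le\beta_i$ for all $i$ we have $\Lcal^{w_0}_{\beta/\alpha}(x;q)_{\pol}=\sum_{T\in\SSYT(\beta/\alpha)}q^{h'_{w_0}(T)}x^{\wt(T)}$, a sum over fillings that are \emph{weakly} increasing along rows, weighted by \emph{weakly} increasing $w_0$-triples. By Definition~\ref{d:F}, $N_{\beta/\alpha}(X;q)=\sum_{S\in\NSYT(\beta/\alpha,\ZZ_+)}q^{h_{w_0}(S)}x^{\wt(S)}$, a sum over fillings that are \emph{strictly} increasing along rows, weighted by \emph{strictly} increasing $w_0$-triples. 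Both sides are symmetric functions (for the left side this is contained in \cite{paths}, and for $N$ it is Remark~\ref{r:Nfinitesuffices}), and when some $\alpha_i>\beta_i$ both sides of \eqref{e:llts2G} vanish, so we may assume $\alpha_i\le\beta_i$ for all $i$ and reduce to proving the symmetric-function identity $\sum_{T\in\SSYT(\beta/\alpha)}q^{h'_{w_0}(T)}x^{\wt(T)}=\omega\,N_{\beta/\alpha}(X;q)$.

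To compute $\omega N_{\beta/\alpha}$ I would pass to the signed alphabet $\{1<\bar 1<2<\bar 2<\cdots\}$ and form the super generating function $\widetilde N(X;Y)$ over fillings of $\beta/\alpha$ by this alphabet, using the convention that unbarred letters increase strictly along each row while barred letters increase weakly, that a $w_0$-triple is counted when it is strictly increasing in its unbarred entries and weakly increasing in its barred entries, and with $x$-weight on unbarred boxes and $y$-weight on barred boxes. By construction $\widetilde N(X;0)=N_{\beta/\alpha}(X;q)$, while the all-barred specialization is the weakly increasing, weak-triple generating function $\widetilde N(0;Y)=\sum_{T\in\SSYT(\beta/\alpha)}q^{h'_{w_0}(T)}y^{\wt(T)}$. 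A Bender--Knuth / Lascoux--Leclerc--Thibon style sign-reversing involution, which for each fixed value $i$ exchanges occurrences of $i$ and $\bar i$ one row-segment at a time while leaving every $w_0$-triple count unchanged, shows that $\widetilde N(X;Y)$ is separately symmetric in $X$ and in $Y$ and satisfies the supersymmetric cancellation property. Hence $\widetilde N(X;Y)=N_{\beta/\alpha}[X+\epsilon Y]$ is the superization of $N_{\beta/\alpha}$, and the standard identity $f[0+\epsilon Y]=(\omega f)[Y]$ for superizations gives, after renaming $Y$ as $x$, exactly $\omega N_{\beta/\alpha}(x;q)=\sum_{T\in\SSYT(\beta/\alpha)}q^{h'_{w_0}(T)}x^{\wt(T)}=\Lcal^{w_0}_{\beta/\alpha}(x;q)_{\pol}$.

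The crux --- and essentially the only substantive step --- is the bookkeeping of the $w_0$-triple statistic across the sign-reversing involution: I must verify that the super convention genuinely restricts to the \emph{strict} statistic $h_{w_0}$ on the all-unbarred side and to the \emph{weak} statistic $h'_{w_0}$ on the all-barred side, and that toggling a single value $i\leftrightarrow\bar i$ can always be carried out without disturbing triples that involve other values, so that no power of $q$ is inverted and no global $q$-prefactor is introduced. This is delicate precisely because both statistics count increasing triples in the \emph{same} orientation $S(u)\lessgtr S(v)\lessgtr S(w)$, so the correct weak/strict split by sign must be pinned down with care. An alternative that bypasses the involution --- and the route underlying \cite{paths} --- is to expand both sides in the Schur basis: the defining relation \eqref{e:seriesLLTdef} expresses $\langle\chi_\lambda\rangle\Lcal^{w_0}_{\beta/\alpha}$ through coefficients of products $\chi_\lambda\cdot E^{w_0}_\alpha$ of non-symmetric Hall--Littlewood polynomials, and matching these against $\langle s_{\lambda^*}\rangle N_{\beta/\alpha}$ via $\omega s_\lambda=s_{\lambda^*}$ reduces the proposition to a transpose symmetry of the non-symmetric Hall--Littlewood polynomials, which then becomes the main obstacle in that approach.
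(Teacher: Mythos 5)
Your proposal never engages with the actual definition \eqref{e:seriesLLTdef} of the LLT series; instead it treats Proposition~\ref{prop:tableaux-form-for-llts} as the interface to $\Lcal^{w_0}_{\beta/\alpha}$ and reduces everything to the combinatorial duality between the weak-tableau/weak-triple sum and the row-strict/strict-triple sum of Definition~\ref{d:F}. That reduction creates a circularity problem. This paper does not prove Proposition~\ref{prop:llts2G} at all: it is imported from \cite[Proposition~4.5.2]{paths}, where it is established algebraically from the definition of the LLT series by iterating the non-symmetric Hall--Littlewood Pieri rule \cite[Lemma~4.5.1]{paths} (the rule quoted here, with variables inverted, as \eqref{Epieri}); the row-strict tableaux and the statistic $h_{w_0}$ arise there as chains of Pieri steps, and $\omega$ enters through basis duality. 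The weak-tableau formula you start from, \cite[Remark~4.5.5 and Corollary~4.5.7]{paths}, sits \emph{downstream} of that proposition in the source, as the corollary giving the other tableau form of the same polynomial part. So deriving Proposition~\ref{prop:llts2G} from Proposition~\ref{prop:tableaux-form-for-llts} plus the $\omega$-duality is, absent an independent proof of \eqref{eq:tableaux-for-Lcal-pol}, a circle; and note that this paper's own Lemma~\ref{l:hFPieri} is likewise proved \emph{from} Proposition~\ref{prop:tableaux-form-for-llts}, so it cannot be used to break the circle either. A smaller instance of the same issue: you assert that $\Lcal^{w_0}_{\beta/\alpha}(x;q)_{\pol}$ vanishes whenever some $\alpha_i>\beta_i$, but Proposition~\ref{prop:tableaux-form-for-llts} is stated only for $\alpha_i\le\beta_i$, so this vanishing --- which is part of the content of Proposition~\ref{prop:llts2G} for general $\alpha,\beta\in\ZZ^m$ --- is also left unproved.

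The second gap is substantive even if one grants your inputs. The entire weight of the argument rests on the superization step: that $\widetilde N(X;Y)$ is separately symmetric, satisfies the supersymmetric cancellation property, specializes to $N_{\beta/\alpha}$ at $Y=0$ and to the weak-triple generating function at $X=0$, and that the $i\leftrightarrow\bar i$ toggling involution preserves the $w_0$-triple count exactly, including at boxes adjacent to row ends where the $\pm\infty$ conventions apply. You assert this (``a Bender--Knuth / Lascoux--Leclerc--Thibon style sign-reversing involution \ldots shows that \ldots'') and then, candidly, identify it as ``the crux --- and essentially the only substantive step,'' which must be ``pinned down with care.'' That self-assessment is accurate: the duality you are after is true (it is the triple-statistic analogue of the LLT $\omega$-duality, in the spirit of the superization arguments surrounding \cite{HHLRU}), but constructing and verifying such an involution \emph{is} the theorem; nothing written in the proposal does it. What you have is a plausible plan whose single nontrivial step is unverified, mounted on an input that cannot be invoked without circularity.
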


\subsection{Extended Delta Theorem}
\label{ss:extended-delta-theorem}

We now give several lemmas on non-symmetric Hall-Littlewood
polynomials, then conclude by using the Cauchy formula for these
polynomials to prove Theorem~\ref{thm:1}, below, yielding the
stronger series identity that implies~\eqref{e:infseriesidentity}.

\begin{lemma}\label{l:xtimesF}
For $\aA\in\mathbb N^{m-1}$ 
and $w_0\in S_m$ and $\tilde w_0\in S_{m-1}$ the 
permutations of  maximum length, we have
\begin{align}\label{Eminusvar}
E^{w_0}_{(\aA,0)}(x_1,\ldots,x_{m};q) & =
E_{\aA}^{\tilde w_0}(x_1,\ldots,x_{m-1};q)\\
\label{Eminusvar2}
F^{w_0}_{(0,\aA)}(x_1,\ldots,x_{m};q) & = F^{\tilde
w_0}_{\aA}(x_2,\ldots,x_{m};q)\,.
\end{align}
\end{lemma}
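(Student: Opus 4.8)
The plan is to recognize both identities as \emph{boundary stability} statements for the (twisted) non-symmetric Hall--Littlewood polynomials and to prove them directly from the recursive Demazure--Lusztig construction of $E^\sigma_\lambda$ recalled in \cite[\S4.3]{paths}. First I would observe that the two equations are mirror images of one another. Using the defining relation \eqref{eq:defEsigma} together with $w_0 w_0 = \mathrm{id}$ in $S_m$ and $\tilde w_0 \tilde w_0 = \mathrm{id}$ in $S_{m-1}$, equation \eqref{Eminusvar2} is equivalent, after cancelling the overbar (a bijection), to
\[
E^{\mathrm{id}}_{(0,-\aA)}(x_1,\ldots,x_m;q) = E^{\mathrm{id}}_{-\aA}(x_2,\ldots,x_m;q),
\]
whereas \eqref{Eminusvar} is the analogous assertion with $\mathrm{id}$ replaced by $w_0$, a zero \emph{appended} rather than prepended, and the \emph{last} variable dropped rather than the first. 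A reversal of variables $x_i\mapsto x_{m+1-i}$ conjugates $S_m$ by $w_0$ and interchanges the two normalizations, so in principle one computation suffices; for clarity I would instead run the same mechanism on each side.

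The key structural input is that among the Hecke generators $T_1,\ldots,T_{m-1}$ used to build $E^\sigma_\lambda$, only $T_{m-1}$ involves $x_m$, and $T_1,\ldots,T_{m-2}$ generate the Hecke algebra of $S_{m-1}$ acting on $x_1,\ldots,x_{m-1}$, acting there exactly as the operators that define the $(m-1)$-variable polynomials. To prove \eqref{Eminusvar} I would build $E^{w_0}_{(\aA,0)}$ from its extremal monomial via the recursion of \cite[\S4.3]{paths}; since $\aA\in\NN^{m-1}$, the value $0$ is the minimal entry of $(\aA,0)$, so it occupies the extremal position throughout, the extremal monomial is a monomial in $x_1,\ldots,x_{m-1}$ only, and the sorting back to $(\aA,0)$ rearranges only the first $m-1$ entries. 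Thus the construction invokes only $T_1,\ldots,T_{m-2}$, which coincide with the generators defining $E^{\tilde w_0}_{\aA}$ on $x_1,\ldots,x_{m-1}$, giving the claimed equality. The mirror argument, with $T_1$ as the boundary generator and $T_2,\ldots,T_{m-1}$ as the inner $S_{m-1}$ on $x_2,\ldots,x_m$, establishes the $E^{\mathrm{id}}$-form of \eqref{Eminusvar2}; the passage back to $F$ is then immediate from \eqref{eq:defEsigma}, since the overbar only inverts $q$ and the variables and commutes with the whole reduction.

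The main obstacle is purely bookkeeping against the precise conventions of \cite[\S4.3]{paths}: one must confirm that, with the zero placed extremally, the reduced word realizing $E^{w_0}_{(\aA,0)}$ (respectively $E^{\mathrm{id}}_{(0,-\aA)}$) genuinely never calls the boundary generator $T_{m-1}$ (respectively $T_1$) -- equivalently, that no inversion of the weight crosses the boundary between positions $m-1$ and $m$ (respectively $1$ and $2$). This is exactly where the particular choice of the $w_0$-twist must be used: the twist is set up so that "append a zero and drop the last variable'' is the stability enjoyed by $E^{w_0}$, while "prepend a zero and drop the first variable'' is enjoyed by $F^{w_0}$, so the conventions align with the direction in which the extremal $0$ sits. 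Once the no-boundary-crossing claim is verified, the identification of the operators with their lower-rank counterparts is automatic and both \eqref{Eminusvar} and \eqref{Eminusvar2} follow.
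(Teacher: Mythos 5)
Your proposal is correct, and its opening move coincides with the paper's: both reduce \eqref{Eminusvar2} via \eqref{eq:defEsigma} and $w_0w_0=\mathrm{id}$ to the $E$-polynomial statement $E^{\mathrm{id}}_{(0,-\aA)}(x_1,\ldots,x_m;q)=E^{\mathrm{id}}_{-\aA}(x_2,\ldots,x_m;q)$, so that everything rests on two variable-stability statements for the twisted non-symmetric Hall--Littlewood polynomials. The difference lies in how those are established. The paper disposes of them in one line by quoting the factorization lemma \cite[Lemma 4.3.4]{paths}, which gives $E^{w_0}_{(\aA,0)}(x_1,\ldots,x_m;q)=E^{\tilde w_0}_{\aA}(x_1,\ldots,x_{m-1};q)\,E^{\mathrm{id}}_{(0)}(x_m;q)$ and $E^{\mathrm{id}}_{(0,-\aA)}(x_1,\ldots,x_m;q)=E^{\mathrm{id}}_{(0)}(x_1;q)\,E^{\mathrm{id}}_{-\aA}(x_2,\ldots,x_m;q)$, and then notes that the rank-one factors equal $1$; you instead re-derive the needed special cases directly from the defining recurrence \cite[(76)]{paths}. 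Your mechanism is sound, and the two checks you flag as the main obstacles are exactly the right ones, and both hold: (a) since $0$ is a minimal entry of $(\aA,0)$ and sits in the last position, the minimal-length permutation sorting the dominant rearrangement $(\mathrm{sort}(\aA),0)$ back to $(\aA,0)$ fixes position $m$ --- when $\aA$ itself contains zeros this uses that minimality preserves the relative order of equal entries --- so some reduced word avoids $s_{m-1}$, and path-independence of the recurrence lets you follow that word; (b) the twist conventions align because $w_0\in S_m$ and $\tilde w_0\in S_{m-1}$ have the same descent pattern on the generator indices $1,\ldots,m-2$ (every one a descent; for the identity twists, none), so the twisted recurrence makes the identical $T_i$-versus-$T_i^{-1}$ choice at every step, while $T_1,\ldots,T_{m-2}$ restricted to Laurent polynomials in $x_1,\ldots,x_{m-1}$ are literally the lower-rank operators. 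As for what each route buys: the paper's citation is shorter and rests on a general factorization valid for any splitting of the weight, whereas your argument is self-contained at the level of this lemma and makes transparent why the $w_0$-twist is precisely the one for which ``append a zero, drop the last variable'' is a stability, and dually why $F^{w_0}$ enjoys the ``prepend a zero'' version. If you write it up, make the tie-breaking point in (a) and the descent-pattern comparison in (b) explicit, since that is where the bookkeeping you mention actually lives.
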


\begin{proof}
The factorizations $E_{(\aA,0)}^{w_0}(x_1,\ldots,x_m;q) =
E_{\aA}^{\tilde w_0}(x_1,\ldots,x_{m-1};q) E_{(0)}^{id}(x_m;q)$ and
$E_{(0,-\aA)}^{id}(x_1,\ldots,x_m;q) = E_{(0)}^{id}(x_1;q)
E_{-\aA}^{id}(x_2,\ldots,x_m;q)$ are given by ~\cite[Lemma
4.3.4]{paths}.  The claim then follows from the definition
$F_\aA^\sigma=\overline{E_{-\aA}^{w_0\sigma}}$ and noting that
\(E_{(0)}^{id}(x_{m};q)=1= F^{id}_{0}(x_1;q)\).
\end{proof}

Inverting all variables and specializing \(\sigma = w_0\) in \cite[Lemma 4.5.1]{paths} yields the following lemma.

\begin{lemma}
For $l\leq m$, \(\aA \in \ZZ^m\),
we have
\begin{equation}\label{Epieri}
\overline{e_l(x)}\,\overline{E_\aA^{w_0}(x;q)}
=\sum_{I\subseteq [m]:|I|=l} q^{h_{I}(\aA)}
\overline{E^{w_0}_{\aA + \varepsilon _{I}}(x;q)}\,,
\end{equation}
where
\(
h_I(\aA) =
\left| \{( i<j) \mid a_j=a_i+1,i\in I, j \notin I\} \right|\), as defined in~\eqref{e:hJeta}.
\end{lemma}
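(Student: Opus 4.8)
The plan is to obtain \eqref{Epieri} as a direct specialization of the general Pieri rule for twisted non-symmetric Hall--Littlewood polynomials, \cite[Lemma 4.5.1]{paths}, followed by an application of the bar involution $\overline{\,\cdot\,}$. Recall that \cite[Lemma 4.5.1]{paths} expresses, for any weight and any $\sigma \in S_m$, the product $e_l(x)\,E^\sigma_\aA(x;q)$ as a sum over $l$-element subsets $I \subseteq [m]$ of the polynomials $E^\sigma_{\aA+\varepsilon_I}(x;q)$, each weighted by an explicit power of $q$ determined by $\sigma$, $\aA$, and $I$. Since the target \eqref{Epieri} is stated with the bar applied to both $e_l$ and $E^{w_0}_\aA$, it suffices to specialize $\sigma = w_0$ in the cited lemma and then invert all variables.

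First I would set $\sigma = w_0$ in \cite[Lemma 4.5.1]{paths}, which collapses the $\sigma$-dependent part of the $q$-statistic to a single count indexed by $I$ and $\aA$. Next I would apply the bar involution $\overline{\,\cdot\,}\colon x_i \mapsto x_i^{-1},\ q \mapsto q^{-1}$ to both sides. Because $\overline{\,\cdot\,}$ is a $\ZZ$-algebra involution that leaves the indexing sets $I$ untouched, it sends $e_l(x) \mapsto \overline{e_l(x)}$ and $E^{w_0}_\aA(x;q) \mapsto \overline{E^{w_0}_\aA(x;q)}$ termwise, turning each factor $q^{c}$ into $q^{-c}$. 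The structure of the resulting identity then matches \eqref{Epieri} verbatim, up to identifying the transformed exponent.

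The only real content, and hence the main obstacle, is the bookkeeping of the $q$-exponent: one must check that the statistic produced by \cite[Lemma 4.5.1]{paths} at $\sigma = w_0$ is exactly $-h_I(\aA)$, so that inverting $q$ yields $q^{h_I(\aA)}$ with $h_I(\aA)=\left|\{(i<j)\mid a_j=a_i+1,\ i\in I,\ j\notin I\}\right|$ as in \eqref{e:hJeta}. In particular one must confirm that the direction of the defining condition $a_j = a_i+1$ with $i\in I$, $j\notin I$ survives the variable inversion with the correct orientation and sign. Once this matching is verified, everything else is a formal consequence of the cited Pieri rule together with the fact that $\overline{\,\cdot\,}$ is a ring homomorphism.
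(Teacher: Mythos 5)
Your proposal is correct and takes essentially the same route as the paper, which disposes of this lemma in one line: ``Inverting all variables and specializing $\sigma = w_0$ in \cite[Lemma 4.5.1]{paths} yields the following lemma.'' The exponent bookkeeping you flag—checking that the $\sigma = w_0$ statistic transforms under $q \mapsto q^{-1}$ into exactly $q^{h_I(\aA)}$ with $h_I(\aA)$ as in \eqref{e:hJeta}—is precisely the verification implicit in that one-line derivation, so nothing in your outline departs from the paper's argument.
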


\begin{lemma}\label{l:F=E}
For every $\lambda \in \ZZ ^{m}$ and $\sigma \in S_{m}$, we have
\begin{equation}\label{e:F=E}
F^{\sigma }_{\lambda }(x;q) = w_{0} E^{w_{0}\sigma }_{w_{0} \lambda
}(x;q^{-1}).
\end{equation}
\end{lemma}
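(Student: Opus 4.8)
The plan is to reduce the statement, via the definition \eqref{eq:defEsigma} of $F^\sigma_\lambda$, to a single inversion symmetry of the twisted non-symmetric Hall--Littlewood polynomials $E^\tau_\mu$, and then to establish that symmetry from their Hecke-algebra construction in \cite[\S4.3]{paths}. Unwinding \eqref{eq:defEsigma}, the identity \eqref{e:F=E} is equivalent to
\begin{equation*}
\overline{E^{\sigma w_0}_{-\lambda}(x;q)} = w_0\, E^{w_0\sigma}_{w_0\lambda}(x;q^{-1}).
\end{equation*}
Setting $\tau = \sigma w_0$ and $\mu = -\lambda$, and using $w_0^2 = \mathrm{id}$ to compute $w_0\tau w_0 = w_0\sigma$ and $-w_0\mu = w_0\lambda$, this is exactly the \emph{inversion symmetry}
\begin{equation}\label{e:inv-sym}
\overline{E^\tau_\mu(x;q)} = w_0\, E^{w_0\tau w_0}_{-w_0\mu}(x;q^{-1}),\qquad \tau\in S_m,\ \mu\in\ZZ^m.
\end{equation}
So it suffices to prove \eqref{e:inv-sym} for all $\tau$ and $\mu$.

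To prove \eqref{e:inv-sym} I would work directly from the intertwiner recursion defining $E^\tau_\mu$ in terms of Demazure--Lusztig operators acting on monomials. The two operations appearing in \eqref{e:inv-sym} each act simply at the level of the Hecke algebra: the bar $\overline{(\cdot)}$, which inverts the $x_i$ and $q$, carries each generator to its bar-conjugated inverse, while conjugation by the variable-reversal $w_0$ is the automorphism sending the $i$-th generator to the $(m-i)$-th, since $w_0 s_i w_0 = s_{m-i}$. The key point is that these two operations, applied together, carry the recursion producing $E^\tau_\mu(x;q)$ to the recursion producing $E^{w_0\tau w_0}_{-w_0\mu}(x;q^{-1})$ step for step: reversing variables turns the index $i$ into $m-i$ (accounting for $\tau \mapsto w_0\tau w_0$), while the bar exchanges raising and lowering intertwiners and inverts the normalizing factors $1 - q\,x_i/x_j$ (accounting for $\mu \mapsto -w_0\mu$ and $q\mapsto q^{-1}$). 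I would check the base case on the dominant weight of the orbit, where $E^\tau_\mu$ is a normalized monomial and the identity collapses to the elementary $\overline{x^\mu} = x^{-\mu} = w_0\, x^{-w_0\mu}$, and then induct on $\ell(\tau)$ together with the length of $\mu$.

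The main obstacle will be the bookkeeping in the inductive step: one must verify that the bar and the $w_0$-conjugation intertwine the normalization constants of the Demazure--Lusztig operators --- the powers of $q$ and the factors $1 - q\,x_i/x_j$ --- in a way that exactly reproduces the superscript change $\tau \mapsto w_0\tau w_0$ and the weight change $\mu \mapsto -w_0\mu$, with no stray scalar surviving. If instead \cite{paths} already records \eqref{e:inv-sym} (or its $F$-form) among the stated properties of $E^\sigma_\lambda$, then the entire argument collapses to the substitution $\tau = \sigma w_0$, $\mu = -\lambda$ made above, and the only thing left to verify is the pair of elementary identities $w_0\tau w_0 = w_0\sigma$ and $-w_0\mu = w_0\lambda$ used in the reduction.
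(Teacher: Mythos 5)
Your proposal is correct and takes essentially the same approach as the paper: the paper likewise reduces \eqref{e:F=E}, via the definition of $F^{\sigma}_{\lambda}$ and the substitutions $\sigma \mapsto \sigma w_{0}$, $\lambda \mapsto -\lambda$, $q\mapsto q^{-1}$, to the inversion symmetry $w_{0}E^{\sigma}_{\lambda}(x_{1}^{-1},\ldots,x_{m}^{-1};q) = E^{w_{0}\sigma w_{0}}_{-w_{0}\lambda}(x;q)$, which is exactly your displayed symmetry after clearing the $q$-inversion from both sides. The paper then proves this just as you outline---by induction on the Hecke-algebra recurrence defining $E^{\sigma}_{\lambda}$, observing that the change of variables $x^{\mu}\mapsto x^{-w_{0}(\mu)}$ replaces $T_{i}$ by $T_{w_{0}s_{i}w_{0}}$, with base case the monomial $x^{-w_{0}(\lambda)}$ at dominant $\lambda$---leaving the same bookkeeping you flag as ``straightforward to verify.''
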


\begin{proof} The desired identity follows from
\begin{equation}\label{e:E=E}
w_{0}E^{\sigma }_{\lambda }(x_{1}^{-1},\ldots,x_{m}^{-1};q) =
E^{w_{0}\sigma w_{0}}_{-w_{0} \lambda }(x;q)
\end{equation}
by applying $w_{0}$ to both sides, substituting $\sigma \mapsto \sigma
w_{0}$, $\lambda \mapsto -\lambda $, and $q\mapsto q^{-1}$, and using
the definition of $F^{\sigma }_{\lambda }$.

To prove \eqref{e:E=E}, we use the characterization of $E^{\sigma
}_{\lambda }(x;q)$ by the recurrence \cite[(76)]{paths} and initial
condition $E^{\sigma }_{\lambda } = x^{\lambda }$ for $\lambda $
dominant.  The change of variables $x^{\mu }\mapsto x^{-w_{0}(\mu )}$
replaces the Hecke algebra operator $T_{i} = T_{s_{i}}$ in the
recurrence with $T_{w_{0}s_{i}w_{0}}$,
giving a modified recurrence
satisfied by the left hand side of \eqref{e:E=E}.  It is
straightforward to verify that the right hand side of \eqref{e:E=E}
satisfies the same modified recurrence.  Since both sides reduce to
$x^{-w_{0}(\lambda )}$ for $\lambda $ dominant, \eqref{e:E=E} holds.
\end{proof}

\begin{lemma}\label{l:E-vs-F-coefs}
Given $\alpha ,\beta \in \ZZ ^{m}$ and a symmetric Laurent polynomial
$f(x_{1},\ldots,x_{m})$, we have, for any $\sigma \in S_{m}$,
\begin{equation}\label{e:E-vs-F-coefs}
\langle E^{w_{0}\sigma w_{0}}_{w_{0}\beta }(x; q^{-1}) \rangle\,
f(x)\cdot E^{w_{0}\sigma w_{0}}_{w_{0}\alpha }(x; q^{-1}) = \langle
F^{\sigma }_{-\alpha }(x; q) \rangle\, f(x)\cdot F^{\sigma }_{-\beta
}(x; q).
\end{equation}
\end{lemma}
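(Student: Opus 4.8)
The plan is to strip away the cosmetic ingredients---the bar involution, the longest element $w_0$, and the passage between the $F$ and $E$ families---until the assertion becomes a single symmetry of the structure constants for multiplying a non-symmetric Hall--Littlewood basis by a symmetric function, and then to read off that symmetry from the duality of these bases. Throughout I use that $\overline{(\cdot)}$, i.e.\ $x_i\mapsto x_i^{-1},\ q\mapsto q^{-1}$, is a ring involution of $\kk[x_1^{\pm 1},\ldots,x_m^{\pm 1}]$, that $w_0$ acts by a $\kk$-algebra automorphism permuting the $x_i$, and that $w_0 f = f$ while $\overline f$ is again symmetric, since $f$ is a symmetric Laurent polynomial.

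First I would rewrite the left-hand side. Specializing $\lambda\mapsto-\alpha$ in \eqref{e:E=E} and then replacing $q$ by $q^{-1}$ gives $E^{w_0\sigma w_0}_{w_0\alpha}(x;q^{-1}) = w_0\,\overline{E^{\sigma}_{-\alpha}(x;q)}$, and likewise for $\beta$. Applying the automorphism $w_0$ to the expansion of $f\cdot E^{w_0\sigma w_0}_{w_0\alpha}(x;q^{-1})$ in the basis $\{E^{w_0\sigma w_0}_\mu(x;q^{-1})\}_\mu$, using $w_0 f=f$, and then applying the involution $\overline{(\cdot)}$, turns the coefficient of $E^{w_0\sigma w_0}_{w_0\beta}(x;q^{-1})$ into the bar of the coefficient of $E^\sigma_{-\beta}(x;q)$ in $\overline f\cdot E^\sigma_{-\alpha}(x;q)$. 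Writing $g=\overline f$, this shows
\[
\langle E^{w_0\sigma w_0}_{w_0\beta}(x;q^{-1})\rangle\, f\, E^{w_0\sigma w_0}_{w_0\alpha}(x;q^{-1}) = \overline{\langle E^{\sigma}_{-\beta}(x;q)\rangle\, g\, E^{\sigma}_{-\alpha}(x;q)}.
\]
For the right-hand side I would instead use the definition \eqref{eq:defEsigma}, which gives $\overline{E^{\sigma w_0}_\mu(x;q)} = F^\sigma_{-\mu}(x;q)$; hence $\overline{(\cdot)}$ carries the basis $\{E^{\sigma w_0}_\mu\}_\mu$ to $\{F^\sigma_{-\mu}\}_\mu$, and the same bookkeeping converts the $F^\sigma$-coefficient extraction into a barred $E^{\sigma w_0}$-coefficient extraction:
\[
\langle F^\sigma_{-\alpha}(x;q)\rangle\, f\, F^\sigma_{-\beta}(x;q) = \overline{\langle E^{\sigma w_0}_{\alpha}(x;q)\rangle\, g\, E^{\sigma w_0}_{\beta}(x;q)}.
\]

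Removing the common outer bar, the lemma thus reduces to the structure-constant identity
\[
\langle E^{\sigma}_{-\beta}\rangle\, g\, E^{\sigma}_{-\alpha} = \langle E^{\sigma w_0}_{\alpha}\rangle\, g\, E^{\sigma w_0}_{\beta}
\]
for every symmetric Laurent polynomial $g$ and all $\alpha,\beta\in\ZZ^m$. By linearity it suffices to treat $g=\chi_\lambda$, and then \eqref{e:seriesLLTdef} identifies the two sides as $\langle\chi_\lambda\rangle$ of two LLT series, so the identity is equivalent to the symmetry $\Lcal^{\sigma^{-1}}_{-\beta/-\alpha}(x;q^{-1}) = \Lcal^{w_0\sigma^{-1}}_{\alpha/\beta}(x;q^{-1})$. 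Conceptually this is exactly the biorthogonality of the families $\{E^\sigma_\lambda\}$ and $\{E^{\sigma w_0}_{-\mu}\}$ under the Cauchy pairing for non-symmetric Hall--Littlewood polynomials, together with self-adjointness of multiplication by a symmetric function: extracting the $E^\sigma_{-\beta}$-coefficient is pairing against the dual vector $E^{\sigma w_0}_\beta$, moving $g$ across the pairing, and re-reading the result as the $E^{\sigma w_0}_\alpha$-coefficient.

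I expect the bar/$w_0$ bookkeeping and the reduction to be routine; the genuine content, and the main obstacle, is the final structure-constant symmetry. One must pin down the correct duality pairing for the non-symmetric Hall--Littlewood polynomials---with the index matching $\sigma\leftrightarrow\sigma w_0$ and $\lambda\mapsto-\lambda$ and the self-adjointness of symmetric multiplication---or equivalently verify the LLT-series symmetry above from \eqref{e:L-formula} and Lemma~\ref{l:F=E}. Care with the conventions (the order of $\alpha,\beta$, the placement of $w_0$, and the $q\mapsto q^{-1}$ twist) is where the argument is easiest to get wrong, so I would check the whole chain against the one-variable case $m=1$, where every step collapses to the Toeplitz identity $\langle x^{-\beta}\rangle\, g\,x^{-\alpha} = \langle x^{\alpha}\rangle\, g\,x^{\beta}$.
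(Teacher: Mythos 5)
Your proposal is correct and follows essentially the same route as the paper: both arguments hinge on converting between the $F$ and $E$ families (your use of \eqref{e:E=E} and \eqref{eq:defEsigma} is equivalent to the paper's use of Lemma~\ref{l:F=E}) and then invoking exactly the duality you name---the biorthogonality of $\{E^{\sigma}_{\lambda}\}$ and $\{E^{\sigma w_{0}}_{-\lambda}\}$ together with self-adjointness of multiplication by a Laurent polynomial, which is the cited \cite[Proposition 4.3.2]{paths}. The only differences are cosmetic: you conjugate by the bar involution so as to work at parameter $q$ rather than $q^{-1}$, and your detour through the LLT series \eqref{e:seriesLLTdef} is an optional reformulation of the same structure-constant symmetry.
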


\begin{proof}
In fact, we will show that
\begin{equation}\label{e:E-vs-F-any}
\langle E^{w_{0}\sigma w_{0}}_{w_{0}\beta }(x; q^{-1}) \rangle\,
f(x)\cdot E^{w_{0}\sigma w_{0}}_{w_{0}\alpha }(x; q^{-1}) = \langle
F^{\sigma }_{-\alpha }(x; q) \rangle\, w_{0}(f(x)) \cdot F^{\sigma
}_{-\beta }(x; q),
\end{equation}
even if we do not assume that $f(x)$ is symmetric.  By
Lemma~\ref{l:F=E}, the right hand side of \eqref{e:E-vs-F-any} is
equal to
\begin{equation}\label{e:E-vs-F-RHS}
\langle E^{w_{0}\sigma }_{-w_{0}\alpha }(x;q^{-1}) \rangle \,
f(x)\cdot E^{w_{0}\sigma }_{-w_{0}\beta }(x;q^{-1}).
\end{equation}
By \cite[Proposition 4.3.2]{paths}, the functions $E^{\sigma
}_{\lambda }(x;q)$ and $E^{\sigma w_{0}}_{-\lambda }(x;q)$ are dual
bases with respect to to the inner product $\langle - , - \rangle_{q}$
defined there.  Moreover, it is immediate from the construction of the
inner product that multiplication by any $f(x)$ is self-adjoint.
This gives
\begin{equation}\label{e:E-vs-F-RHS-2}
\langle f(x) E^{w_{0}\sigma w_{0}}_{w_{0}\alpha }(x;q^{-1}),\,
E^{w_{0}\sigma }_{-w_{0}\beta }(x;q^{-1}) \rangle_{q^{-1}} = \langle
E^{w_{0}\sigma w_{0}}_{w_{0}\alpha }(x;q^{-1}),\, f(x) E^{w_{0}\sigma
}_{-w_{0}\beta }(x;q^{-1}) \rangle_{q^{-1}},
\end{equation}
in which the left hand side is equal to the left hand side of
\eqref{e:E-vs-F-any}, and the right hand side is equal to
\eqref{e:E-vs-F-RHS}.
\end{proof}

\begin{lemma}\label{l:hFPieri}
For $w_0$ the maximum length permutation in $S_m$ and $\eta\in\mathbb N^m$,
we have
\begin{equation}
h_l(x) F_\eta^{w_0}(x;q) = \sum_{\substack{\tau\in\mathbb N^m\\
|\tau|=l}} q^{d(\eta,\tau)} F_{\eta+\tau}^{w_0}(x;q)\,,
\end{equation}
recalling from~\eqref{e:detagamma} that $d(\eta,\tau) = \sum_{j < r}
\big| [\eta_{j},\eta_{j}+\tau_j] \cap [\eta_{r},\eta_{r}+\tau_r-1]
\big|$.
\end{lemma}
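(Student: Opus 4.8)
The plan is to deduce this multi-box ($h_l$) Pieri rule from the single-box ($e_l$) Pieri rule \eqref{Epieri}, using the duality of Lemma~\ref{l:E-vs-F-coefs} to move it onto the $F^{w_0}$ basis and then exploiting the fact that multiplication operators by symmetric functions all commute. First I would transfer \eqref{Epieri} into the $F^{w_0}$ basis. Applying the overbar to \eqref{Epieri} (an involution sending $q\mapsto q^{-1}$) turns it into an ordinary $e_l$-Pieri rule $e_l(x)\,E^{w_0}_{\aA}(x;q)=\sum_{|I|=l}q^{-h_I(\aA)}\,E^{w_0}_{\aA+\varepsilon_I}(x;q)$ for the $E^{w_0}$ basis. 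Then Lemma~\ref{l:E-vs-F-coefs}, with the symmetric multiplier $f=e_l$ and $\sigma=w_0$ (so that $w_0\sigma w_0=w_0$), identifies the matrix of ``multiply by $e_l$'' in the $E^{w_0}(x;q^{-1})$ basis with the corresponding matrix in the $F^{w_0}(x;q)$ basis. This produces an explicit single-box rule $e_l(x)\,F^{w_0}_\eta(x;q)=\sum_{\delta\in\{0,1\}^m,\,|\delta|=l}c_e(\eta,\delta)\,F^{w_0}_{\eta+\delta}(x;q)$ with explicit $q$-power coefficients $c_e$.

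Next, since all the multiplication operators $\hat e_l,\hat h_l$ on $\QQ(q)[x_1^{\pm1},\ldots,x_m^{\pm1}]$ commute, and their generating functions satisfy $\bigl(\sum_l z^l\hat h_l\bigr)\bigl(\sum_l(-z)^l\hat e_l\bigr)=\mathrm{id}$ because $\prod_i(1-zx_i)^{-1}\prod_i(1-zx_i)=1$, the entire $h_l$-Pieri matrix in the $F^{w_0}$ basis is already determined by the $e_l$-Pieri matrix. Indeed, because the operators raise the degree $|\eta|$ by one box at a time, $\sum_l(-z)^l\hat e_l$ is a formal power series in $z$ with constant term $\mathrm{id}$, hence invertible, and $\sum_l z^l\hat h_l$ is forced to be its inverse. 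It therefore suffices to verify that the claimed generating function $\sum_{\tau\in\mathbb N^m}z^{|\tau|}q^{d(\eta,\tau)}F^{w_0}_{\eta+\tau}$ inverts the single-box rule — equivalently that, with the claimed $\hat h_a$ and the established $\hat e_b$, the Newton-type cancellation $\sum_{a+b=n}(-1)^b\hat h_a\hat e_b\,F^{w_0}_\eta=0$ holds for every $\eta$ and every $n\geq1$. I would organize this as an induction on $l$ via $h_l=\sum_{i\geq1}(-1)^{i-1}e_i\,h_{l-i}$, with base cases $h_0=1$ and $h_1=e_1$, where $d(\eta,\varepsilon_k)=|\{j<k:\eta_j=\eta_k\}|$ matches the single-box weight.

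The main obstacle is precisely this combinatorial verification. After expanding each composite $\hat e_b F^{w_0}_{\eta+\tau}$ by the single-box rule and collecting the coefficient of a fixed $F^{w_0}_{\eta+\mu}$, one must show that the signed sum of products of $q$-powers, over all splittings $\mu=\tau+\delta$ with $\delta\in\{0,1\}^m$, telescopes to $q^{d(\eta,\mu)}$ when $b=0$ and cancels otherwise. Concretely this reduces to an identity relating the interval-intersection statistic $d(\eta,\tau)$ of \eqref{e:detagamma} and the triple statistics $h_I$ of \eqref{e:hJeta} that govern the single-box weights; the delicate point is controlling how $d(\eta,\tau)$ changes when one box of $\mu$ is added last (as $\delta$) rather than placed together with the rest (as $\tau$). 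I expect the cleanest route to be a sign-reversing involution on the pairs $(\tau,\delta)$ with $\tau+\delta=\mu$ and $\delta\neq0$, pairing terms of opposite sign and equal $q$-weight, so that only the $\delta=0$ term survives and contributes exactly $q^{d(\eta,\mu)}$.

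As a consistency check on the bookkeeping, one should confirm at the end that the index and parameter transformations introduced by the overbar and by Lemma~\ref{l:E-vs-F-coefs} (namely $q\mapsto q^{-1}$ and the $w_0$-reindexing $\mu\mapsto w_0\mu$) recombine to leave the final coefficient in the orientation $q^{+d(\eta,\tau)}$ asserted in the statement, rather than its inverse or a reflected variant.
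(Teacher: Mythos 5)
Your reduction is sound in its first two steps: applying the overbar to \eqref{Epieri} and then Lemma~\ref{l:E-vs-F-coefs} with $\sigma=w_0$ (so $w_0\sigma w_0=w_0$) does yield an explicit single-box rule $e_l(x)F^{w_0}_\eta(x;q)=\sum_{\delta}c_e(\eta,\delta)F^{w_0}_{\eta+\delta}(x;q)$, where one can compute $c_e(\eta,\delta)=q^{|\{j<i\,:\,\delta_i=1,\,\delta_j=0,\,\eta_i=\eta_j\}|}$; and since the $F^{w_0}_\lambda$ form a basis of the Laurent polynomial ring and $\sum_{a+b=n}(-1)^bh_ae_b=0$ for $n\geq1$, the $h$-Pieri matrix is indeed uniquely determined by the $e$-Pieri matrix. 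But the proof stops exactly where the mathematical content begins. In your route everything hinges on the cancellation
\begin{equation*}
\sum_{\substack{\delta\in\{0,1\}^m,\ \tau\in\NN^m\\ \delta+\tau=\mu}}
(-1)^{|\delta|}\,c_e(\eta,\delta)\,q^{d(\eta+\delta,\tau)}=0
\qquad\text{for all }\mu\in\NN^m\text{ with }|\mu|\geq1,
\end{equation*}
and for this you offer only the expectation that a sign-reversing involution exists. You neither construct the involution nor verify that it preserves the $q$-weight; the interplay between the interval-intersection statistic $d$ and the statistic governing $c_e$ is precisely the delicate point you flag yourself, and it is left unresolved. (The identity does hold --- e.g.\ for $m=2$, $\eta=(0,0)$, $\mu=(1,1)$ the four terms give $q-1-q+1=0$ --- but checking small cases is not a proof.) As written, this is a plan with the hardest step missing.

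The paper's proof shows that this combinatorial work can be bypassed entirely. It also invokes Lemma~\ref{l:E-vs-F-coefs}, but with $f=h_l$ directly: by \eqref{e:seriesLLTdef}, the desired coefficient $\langle F^{w_0}_{\eta+\tau}(x;q)\rangle\,h_l(x)F^{w_0}_\eta(x;q)$ equals the coefficient of $\chi_{(l)}=h_l$ in the LLT series $\Lcal^{w_0}_{w_0(\beta/\alpha)}(x;q)$ with $\beta=-\eta$ and $\alpha=-\eta-\tau$. Proposition~\ref{prop:tableaux-form-for-llts} then finishes in one stroke: extracting the $h_l$ coefficient (by setting all but one variable to zero) leaves only the tableau filled with a single letter, for which every $w_0$-triple is automatically weakly increasing, so the coefficient is $q$ raised to the total number of $w_0$-triples of $w_0(-\eta/(-\eta-\tau))$, which equals $d(\eta,\tau)$ by \eqref{def:triples}. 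To complete your argument you would either have to carry out the sign-reversing involution in full, or recognize that the tableau formula for LLT series already packages exactly the cancellation you are trying to establish.
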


\begin{proof}
Set $\alpha = -\eta -\tau$ and $\beta = -\eta$.
By \eqref{e:seriesLLTdef} and Lemma~\ref{l:E-vs-F-coefs} (with $\sigma
=w_{0}$), we have
\begin{equation}\label{e:L-by-F}
\langle h_{l}(x) \rangle\, \Lcal ^{w_{0}}_{w_{0}(\beta /\alpha )}(x;q)
= \langle E_{w_0\beta }^{w_0}(x;q^{-1}) \rangle h_l(x) E_{w_0
\alpha}^{w_0}(x;q^{-1}) = \langle F_{- \alpha}^{w_0}(x;q) \rangle
h_l(x) {F_{- \beta }^{w_0}(x;q)}.
\end{equation}
By specializing all but one variable in
\eqref{eq:tableaux-for-Lcal-pol} to zero, Proposition
\ref{prop:tableaux-form-for-llts} implies that the coefficient of
$h_l$ in $\Lcal^{w_0}_{w_0(\beta /\alpha)}(x;q)_{\pol}$ is
\(q^{h'_{w_0}(T)}\) for \(T\) the semistandard tableau of shape
\(w_0(\beta/\alpha)\) 
filled with a single letter, where $h'_{w_0}(T)$
is the number of $w_0$-triples of $w_0(\beta /\alpha )=w_0(-\eta/(-\eta-\tau))$.  By \eqref{def:triples}, this number is $d(\eta,\tau)$.
\end{proof}

\begin{thm}\label{thm:1}
For $0\leq l<m\leq N$ and $w_0\in S_{m}$ the maximum length permutation, we
have
\begin{multline*}
\frac{\prod_{i+1<j\le m}(1-qtx_i/x_j)} {\prod_{i < j\le m} (1-tx_i/x_j)} x_1
\cdots x_{m} h_{N-m}(x_1,\ldots,x_{m})
\overline{e_l(x_2,\ldots,x_{m})} =\\
\sum_{\substack{(0,\aA),\tau\,\in\mathbb N^{m}\\ |\tau|=N-m}}
\sum_{\substack{I\subseteq [m-1]\\ |I|=l} }\, t^{|\aA|}
q^{d((0,\aA),\tau)+h_I(\aA)}\; w_0 \bigl(
F^{w_0}_{(0,\aA)+\tau+(1^{m})}(x_1,\ldots,x_{m};q)
\overline{E^{w_0}_{(\aA,0)+\varepsilon_I}(x_1,\ldots,x_{m};q)}\bigr).
\end{multline*}
\end{thm}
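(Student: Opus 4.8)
The plan is to strip off the two Pieri-type sums on the right, reducing the whole statement to a single Cauchy-type evaluation. First I would collapse the $\tau$-sum. Since $d(\eta,\tau)$ in \eqref{e:detagamma} is unchanged when $\eta$ is translated by a constant vector, we have $d((0,\aA)+(1^m),\tau)=d((0,\aA),\tau)$, so Lemma~\ref{l:hFPieri} applied with $\eta=(0,\aA)+(1^m)$ gives
\[
\sum_{|\tau|=N-m}q^{d((0,\aA),\tau)}F^{w_0}_{(0,\aA)+(1^m)+\tau}(x;q)=h_{N-m}(x)\,F^{w_0}_{(0,\aA)+(1^m)}(x;q).
\]
Next I would collapse the $I$-sum. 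Using the factorization \eqref{Eminusvar} to write $\overline{E^{w_0}_{(\aA,0)}(x;q)}=\overline{E^{\tilde w_0}_{\aA}(x_1,\ldots,x_{m-1};q)}$, applying the Pieri rule \eqref{Epieri} in the $m-1$ variables $x_1,\ldots,x_{m-1}$, and then reattaching the final zero with \eqref{Eminusvar} again yields
\[
\sum_{\substack{I\subseteq[m-1]\\|I|=l}}q^{h_I(\aA)}\overline{E^{w_0}_{(\aA,0)+\varepsilon_I}(x;q)}=\overline{e_l(x_1,\ldots,x_{m-1})}\;\overline{E^{w_0}_{(\aA,0)}(x;q)},
\]
where $h_I((\aA,0))=h_I(\aA)$ for $I\subseteq[m-1]$ since the last entry $0$ can never equal $a_i+1$ (this is exactly why the theorem's $I$ ranges over $[m-1]$).

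Because the weights $d((0,\aA),\tau)$ and $h_I(\aA)$ are independent and $w_0$ merely permutes variables, the double sum inside $w_0$ factors. Pulling the symmetric factor $h_{N-m}(x)$ and the factor $\overline{e_l(x_1,\ldots,x_{m-1})}$ back out through $w_0$, using $w_0\,h_{N-m}(x)=h_{N-m}(x)$ and $w_0\,\overline{e_l(x_1,\ldots,x_{m-1})}=\overline{e_l(x_2,\ldots,x_m)}$, rewrites the entire right-hand side as
\[
h_{N-m}(x)\,\overline{e_l(x_2,\ldots,x_m)}\sum_{\aA\in\NN^{m-1}}t^{|\aA|}\,w_0\bigl(F^{w_0}_{(0,\aA)+(1^m)}(x;q)\,\overline{E^{w_0}_{(\aA,0)}(x;q)}\bigr).
\]
Comparing with the left-hand side, the theorem is therefore equivalent to its $N=m$, $l=0$ instance, the ``core'' identity
\[
\sum_{\aA\in\NN^{m-1}}t^{|\aA|}\,w_0\bigl(F^{w_0}_{(0,\aA)+(1^m)}(x;q)\,\overline{E^{w_0}_{(\aA,0)}(x;q)}\bigr)=\frac{\prod_{i+1<j\le m}(1-q\,t\,x_i/x_j)}{\prod_{i<j\le m}(1-t\,x_i/x_j)}\,x_1\cdots x_m.
\]
I would prove this core identity first and then recover the general case by multiplying back by $h_{N-m}(x)\,\overline{e_l(x_2,\ldots,x_m)}$ and running the two displays above in reverse; this order also avoids having to divide by $h_{N-m}\overline{e_l}$.

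The remaining and genuinely hard step is the core identity, and this is where the Cauchy formula for the twisted non-symmetric Hall--Littlewood polynomials enters. Using the shift property $F^{w_0}_{\lambda+(1^m)}=x_1\cdots x_m\,F^{w_0}_{\lambda}$ (readily checked from $F^\sigma_\lambda=\overline{E^{\sigma w_0}_{-\lambda}}$ and the corresponding shift property of $E^\sigma_\lambda$) together with the factorizations \eqref{Eminusvar} and \eqref{Eminusvar2}, I would reduce the core identity, after cancelling the common $x_1\cdots x_m$, to evaluating the bilinear sum $\sum_{\aA\in\NN^{m-1}}t^{|\aA|}\,F^{\tilde w_0}_{\aA}(x_2,\ldots,x_m;q)\,\overline{E^{\tilde w_0}_{\aA}(x_1,\ldots,x_{m-1};q)}$ and then applying $w_0$. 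I would recognize this sum as the reproducing kernel for the pairing $\langle-,-\rangle_q$ under which $\{E^\sigma_\lambda\}$ and $\{E^{\sigma w_0}_{-\lambda}\}$ are dual bases (the duality underlying Lemma~\ref{l:E-vs-F-coefs}), with the factor $t^{|\aA|}$ coming from the homogeneity of $E^{\tilde w_0}_\aA$ under scaling one alphabet by $t$. The main obstacle is precisely establishing this Cauchy identity and specializing it to the two overlapping, shifted-by-one blocks of variables $(x_2,\ldots,x_m)$ and $(x_1,\ldots,x_{m-1})$: it is the staggering of these blocks that produces the asymmetry between the numerator product $\prod_{i+1<j}(1-q\,t\,x_i/x_j)$ and the denominator $\prod_{i<j}(1-t\,x_i/x_j)$, and keeping careful track of the $w_0\in S_m$ versus $\tilde w_0\in S_{m-1}$ variable reversals in this specialization is the most delicate bookkeeping of the argument.
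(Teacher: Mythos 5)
Your argument is correct and is essentially the paper's proof run backwards: the paper starts from the Cauchy formula for twisted non-symmetric Hall--Littlewood polynomials \eqref{e:Cauchy} (quoted from \cite[Theorem 5.1.1]{paths}), specializes $x_i\mapsto x_i^{-1}$ and $y_j\mapsto x_{j+1}$, and then multiplies in $x_1\cdots x_m$, $h_{N-m}$, and $\overline{e_l}$ using exactly the shift property, Lemma~\ref{l:hFPieri}, \eqref{Epieri}, \eqref{Eminusvar}, and \eqref{Eminusvar2} that you use to collapse the two Pieri sums and reduce to your ``core'' identity. The only real difference is that the step you flag as the genuinely hard one---the staggered-variable Cauchy evaluation---is not proved in the paper either but simply cited from \cite{paths}, so your core identity follows from that citation plus the factorizations exactly as you describe, with no reproducing-kernel argument needed.
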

\begin{proof}
Our starting point is the Cauchy formula~\cite[Theorem 5.1.1]{paths}
for the twisted non-symmetric Hall-Littlewood polynomials associated
to any $\tilde \sigma\in S_{m-1}$:
\begin{equation}\label{e:Cauchy}
\frac{\prod _{i<j< m} (1 - q\, t\, x_{i} \, y_{j})}{\prod _{i\leq j <
m} (1 - t\, x_{i}\, y_{j})} = \sum _{\aA\in\mathbb N^{m-1}} t^{|\aA
|}\, E^{\tilde \sigma }_{\aA }(x_{1},\ldots,x_{m-1};q^{-1}) \,
F^{\tilde\sigma }_{\aA }(y_{1},\ldots,y_{m-1};q)\,.
\end{equation}
Take $\tilde \sigma = \tilde w_0$ the maximum length permutation in $S_{m-1}$,
replace $x_i$ by $x_i^{-1}$, and then let $y_j=x_{j+1}$ to get
\begin{equation}
\frac{\prod _{i+1<j\leq m}(1 - q\, t\, x_{j}/x_{i})}{\prod _{i<j\leq
m}(1 - t\, x_{j}/x_{i})} = \sum_{\aA\in\NN^{m-1}} t^{|\aA |} F^{\tilde
w_0}_{ \aA}(x_2,\ldots,x_{m};q) \overline{E^{\tilde
w_0}_{\aA}(x_1,\ldots,x_{m-1};q)}\,.
\end{equation}
Since $(x_1\cdots x_{m})F^{\tilde
w_0}_{\aA}(x_2,\ldots,x_{m};q)=(x_1\cdots
x_{m})F^{w_0}_{(0,\aA)}(x_1,\ldots,x_{m};q)= F^{w_0}_{
(0,\aA)+(1^{m})}(x_1,\ldots,x_{m};q)$ for $w_0\in S_{m}$
by~\eqref{Eminusvar2} and the definition of $F^\sigma$, we have

\begin{equation*}
\frac{\prod _{i+1<j\leq m}(1 - q\, t\, x_{j}/x_{i})}{\prod _{i<j\leq
m}(1 - t\, x_{j}/x_{i})} (x_1\cdots x_{m}) = \sum_{\aA\in\NN^{m-1}}
t^{|\aA |} F^{w_0}_{ (0,\aA)+(1^{m})}(x_1,\ldots,x_{m};q)
\overline{E^{\tilde w_0}_{\aA}(x_1,\ldots,x_{m-1};q)}\,.
\end{equation*}
Multiplying by $h_{N-m}(x_1,\ldots,x_{m})$ with the help of
Lemma~\ref{l:hFPieri} yields
\begin{multline*}
\frac{\prod _{i+1<j\le m}(1 - q\, t\, x_{j}/x_{i})}{\prod _{i<j\le
m}(1 - t\, x_{j}/x_{i})} (x_1\cdots x_{m})
h_{N-m}(x_1,\ldots,x_{m})\\
= \sum_{\substack{(0,\aA), \tau\in\NN^{m}\\|\tau|=N-m}} t^{|\aA |}
q^{d((0,\aA),\tau)} F^{w_0}_{\eta +\tau}(x_1,\ldots,x_{m};q)
\overline{E^{\tilde w_0}_{\aA}(x_1,\ldots,x_{m-1};q)}\,,
\end{multline*}
where $\eta=(1^{m})+(0,\aA)$ and we have used that
$d(\eta,\tau)=d((0,\aA),\tau)$ by~\eqref{e:detagamma}.  Now multiply by
$\overline{ e_l(x_1,\ldots,x_{m-1})}$ and apply~\eqref{Epieri} to get
\begin{multline}
\frac{\prod _{i+1<j\le m}(1 - q\, t\, x_{j}/x_i)}{\prod _{i<j\le m}(1
- t\, x_{j}/x_{i})} (x_1\cdots x_{m}) \overline{
e_l(x_1,\ldots,x_{m-1})}
h_{N-m}(x_1,\ldots,x_{m})\\
= \sum_{\substack{(0,\aA),\tau\,\in\mathbb N^{m}\\ |\tau|=N-m}} \,
\sum_{|I|=l} t^{|\aA |} q^{d((0,\aA),\tau)+h_I(\aA)} F^{w_0}_{\eta
+\tau}(x_1,\ldots,x_{m};q) \overline{E^{\tilde
w_0}_{\aA+\varepsilon_I}(x_1,\ldots,x_{m-1};q)}\,,
\end{multline}
where $I\subseteq [m-1]$.  The result then follows by
using~\eqref{Eminusvar} on the right hand side and applying $w_0\in
S_{m}$ to both sides, noting that $w_0(\overline{
e_l(x_2,\ldots,x_{m})})= \overline{ e_l(x_1,\ldots,x_{m-1})}$.
\end{proof}

\begin{proof}[Proof of the Extended Delta Conjecture]
It suffices to prove the reformulation in~\eqref{e:infseriesidentity};
this follows by applying $\Hbold^{m}_q$ and~\eqref{e:L-formula} to the
identity of Theorem~\ref{thm:1}, taking the polynomial part, and using
Proposition~\ref{prop:llts2G}.
\end{proof}

\bibliographystyle{hamsplain}
\bibliography{shuffle}

\providecommand{\bysame}{\leavevmode\hbox to3em{\hrulefill}\thinspace}
\begin{thebibliography}{10}

\bibitem{BeGaHaTe99}
F.~Bergeron, A.~M. Garsia, M.~Haiman, and G.~Tesler, \emph{Identities and
  positivity conjectures for some remarkable operators in the theory of
  symmetric functions}, Methods Appl. Anal. \textbf{6} (1999), no.~3, 363--420,
  Dedicated to Richard A.\ Askey on the occasion of his 65th birthday, Part
  III.

\bibitem{paths}
Jonah Blasiak, Mark Haiman, Jennifer Morse, Anna Pun, and George~H. Seelinger,
  \emph{A shuffle theorem for paths under any line}, 2021,
  \mbox{arXiv:math.CO/2102.07931}.

\bibitem{BurbSchi12}
Igor Burban and Olivier Schiffmann, \emph{On the {H}all algebra of an elliptic
  curve, {I}}, Duke Math. J. \textbf{161} (2012), no.~7, 1171--1231.

\bibitem{CarlMell18}
Erik Carlsson and Anton Mellit, \emph{A proof of the shuffle conjecture}, J.
  Amer. Math. Soc. \textbf{31} (2018), no.~3, 661--697.

\bibitem{DaddIraVan19a}
Michele D'Adderio, Alessandro Iraci, and Anna~Vanden Wyngaerd, \emph{The
  {S}chr\"oder case of the generalized {D}elta conjecture}, European J. Combin.
  \textbf{81} (2019), 58--83.

\bibitem{DaddIraVan19}
\bysame, \emph{Theta operators, refined delta conjectures, and coinvariants},
  2019, \mbox{arXiv:math.CO/1906.02623}.

\bibitem{DaddMe}
Michele D'Adderio and Anton Mellit, \emph{A proof of the compositional {D}elta
  conjecture}, 2020, \mbox{arXiv:math.CO/2011.11467}.

\bibitem{FeigTsym11}
B.~L. Feigin and A.~I. Tsymbaliuk, \emph{Equivariant {$K$}-theory of {H}ilbert
  schemes via shuffle algebra}, Kyoto J. Math. \textbf{51} (2011), no.~4,
  831--854.

\bibitem{GarsHaim96}
A.~M. Garsia and M.~Haiman, \emph{Some natural bigraded {$S\sb n$}-modules and
  {$q,t$}-{K}ostka coefficients}, Electron. J. Combin. \textbf{3} (1996),
  no.~2, Research Paper 24, approx.\ 60 pp.\ (electronic), The Foata
  Festschrift.

\bibitem{GaHaTe99}
A.~M. Garsia, M.~Haiman, and G.~Tesler, \emph{Explicit plethystic formulas for
  {M}acdonald {$q,t$}-{K}ostka coefficients}, S\'em. Lothar. Combin.
  \textbf{42} (1999), Art. B42m, 45 pp. (electronic), The Andrews Festschrift
  (Maratea, 1998).

\bibitem{GaHaReY}
Adriano Garsia, Jim Haglund, Jeffrey~B. Remmel, and Meesue Yoo, \emph{A proof
  of the delta conjecture when {$q=0$}}, Ann. Comb. \textbf{23} (2019), no.~2,
  317--333.

\bibitem{GrojHaim07}
I.~Grojnowski and M.~Haiman, \emph{Affine {H}ecke algebras and positivity of
  {LLT} and {M}acdonald polynomials}, Unpublished manuscript, 2007.

\bibitem{HHLRU}
J.~Haglund, M.~Haiman, N.~Loehr, J.~B. Remmel, and A.~Ulyanov, \emph{A
  combinatorial formula for the character of the diagonal coinvariants}, Duke
  Math. J. \textbf{126} (2005), no.~2, 195--232.

\bibitem{HagRemWil18}
J.~Haglund, J.~B. Remmel, and A.~T. Wilson, \emph{The delta conjecture}, Trans.
  Amer. Math. Soc. \textbf{370} (2018), no.~6, 4029--4057.

\bibitem{HagRhSh18}
James Haglund, Brendon Rhoades, and Mark Shimozono, \emph{Ordered set
  partitions, generalized coinvariant algebras, and the delta conjecture}, Adv.
  Math. \textbf{329} (2018), 851--915.

\bibitem{HagRhSh19}
\bysame, \emph{Hall-{L}ittlewood expansions of {S}chur delta operators at
  {$t=0$}}, S\'{e}m. Lothar. Combin. \textbf{79} (2018-2019), Art. B79c, 20.

\bibitem{Haiman02}
Mark Haiman, \emph{Vanishing theorems and character formulas for the {H}ilbert
  scheme of points in the plane}, Invent. Math. \textbf{149} (2002), no.~2,
  371--407.

\bibitem{Macdonald95}
I.~G. Macdonald, \emph{Symmetric functions and {H}all polynomials}, second ed.,
  The Clarendon Press, Oxford University Press, New York, 1995, With
  contributions by A.~Zelevinsky, Oxford Science Publications.

\bibitem{Negut14}
Andrei Negut, \emph{The shuffle algebra revisited}, Int. Math. Res. Not. IMRN
  (2014), no.~22, 6242--6275.

\bibitem{Negut18}
Andrei Negut, \emph{Hecke correspondences for smooth moduli spaces of sheaves},
  2018, \mbox{arXiv:math.AG/1804.03645}.

\bibitem{QW}
Dun Qiu and Andrew~Timothy Wilson, \emph{The valley version of the extended
  delta conjecture}, J. Combin. Theory Ser. A \textbf{175} (2020), 105271, 31.

\bibitem{Rhoades18}
Brendon Rhoades, \emph{Ordered set partition statistics and the delta
  conjecture}, J. Combin. Theory Ser. A \textbf{154} (2018), 172--217.

\bibitem{Romero17}
Marino Romero, \emph{The delta conjecture at {$q = 1$}}, Trans. Amer. Math.
  Soc. \textbf{369} (2017), no.~10, 7509--7530.

\bibitem{Schiffmann12}
Olivier Schiffmann, \emph{On the {H}all algebra of an elliptic curve, {II}},
  Duke Math. J. \textbf{161} (2012), no.~9, 1711--1750.

\bibitem{SchiVass13}
Olivier Schiffmann and Eric Vasserot, \emph{The elliptic {H}all algebra and the
  {$K$}-theory of the {H}ilbert scheme of {$\mathbb A^2$}}, Duke Math. J.
  \textbf{162} (2013), no.~2, 279--366.

\bibitem{Wil16}
Andrew~Timothy Wilson, \emph{A weighted sum over generalized {T}esler
  matrices}, J. Algebraic Combin. \textbf{45} (2017), no.~3, 825--855.

\bibitem{Zabrocki19}
Mike Zabrocki, \emph{A module for the delta conjecture}, 2019,
  \mbox{arXiv:math.CO/1902.08966}.

\bibitem{Zabrocki19b}
Mike Zabrocki, \emph{A proof of the 4-variable {C}atalan polynomial of the
  delta conjecture}, J. Comb. \textbf{10} (2019), no.~4, 599--632.

\end{thebibliography}

\end{document}